\title[Idempotent and invariant measures]{On idempotent measure conjecture and decomposition of invariant measures}
\subjclass[2020]{Primary 03C45, Secondary 37BXX, 54H15}
\keywords{model theory, Ellis semigroup, NIP, convolutions}
\author[D. M. HOFFMANN]{Daniel Max Hoffmann$^{\dagger}$}
\thanks{$^{\dagger}$SDG. The first author is supported
by the National Science Centre (Narodowe Centrum Nauki, Poland)
grant no. 2021/43/B/ST1/00405.}
\address{$^{\dagger}$
Instytut Matematyki\\
Uniwersytet Warszawski\\
Warszawa\\
Poland}
\email{daniel.max.hoffmann@gmail.com}
\urladdr{\href{https://orcid.org/0000-0002-4514-269X}{\includegraphics[height=\fontcharht\font`\B]{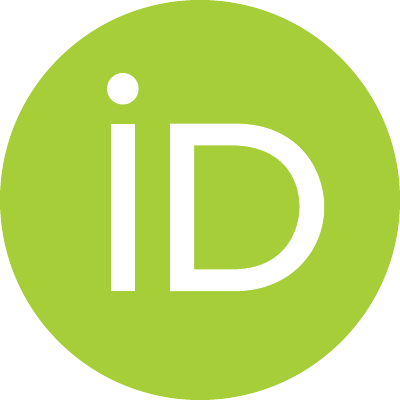} {0000-0002-4514-269X}}\\
\href{https://sites.google.com/site/danielmaxhoffmann/}{https://sites.google.com/site/danielmaxhoffmann/}}
\author[T. Rzepecki]{Tomasz Rzepecki$^{\ddagger}$}
\address{$^{\ddagger}$
Instytut Matematyczny\\
Uniwersytet Wrocławski\\
Wrocław\\
Poland}
\email{tomasz.rzepecki@math.uni.wroc.pl}
\urladdr{\href{https://orcid.org/0000-0001-9786-1648}{\includegraphics[height=\fontcharht\font`\B]{orcidlogo.pdf} {0000-0001-9786-1648}}\\
\href{https://fricas.org/~rzepecki/}{https://fricas.org/~rzepecki/}}
\date{\today}
 \DeclareMathOperator{\aut}{Aut} \DeclareMathOperator{\id}{id}
\DeclareMathOperator{\stab}{Stab}
 \DeclareMathOperator{\gal}{Gal}
\DeclareMathOperator{\df}{def}
\DeclareMathOperator{\tp}{tp}
\DeclareMathOperator{\Borel}{Borel}
\DeclareMathOperator{\res}{res}
\DeclareMathOperator{\Aut}{Aut}
\DeclareMathOperator{\fGen}{fGen}
\DeclareMathOperator{\Rr}{\mathbb{R}}
\newcommand{\inv}{\mathrm{inv}}
\newtheorem{theorem}{Theorem}[section]
\newtheorem{proposition}[theorem]{Proposition}
\newtheorem{lemma}[theorem]{Lemma}
\newtheorem{cor}[theorem]{Corollary}
\newtheorem{fact}[theorem]{Fact}
\theoremstyle{definition}
\newtheorem{definition}[theorem]{Definition}
\newtheorem{remark}[theorem]{Remark}
\newtheorem{question}[theorem]{Question}
\newtheorem{conj}[theorem]{Conjecture}
\theoremstyle{remark}
\newtheorem*{theorem*}{Theorem}
\newtheorem*{cor*}{Corollary}
\theoremstyle{definition}
\theoremstyle{definition}
\theoremstyle{definition}
\theoremstyle{remark}
\newtheorem*{clm*}{Claim}
\providecommand*{\cupdot}{%
  \mathbin{%
    \mathpalette\@cupdot{}%
  }%
}
\newcommand*{\@cupdot}[2]{%
  \ooalign{%
    $\m@th#1\cup$\cr
    \sbox0{$#1\cup$}%
    \dimen@=\ht0 %
    \sbox0{$\m@th#1\cdot$}%
    \advance\dimen@ by -\ht0 %
    \dimen@=.5\dimen@
    \hidewidth\raise\dimen@\box0\hidewidth
  }%
}
\providecommand*{\bigcupdot}{%
  \mathop{%
    \vphantom{\bigcup}%
    \mathpalette\@bigcupdot{}%
  }%
}
\newcommand*{\@bigcupdot}[2]{%
  \ooalign{%
    $\m@th#1\bigcup$\cr
    \sbox0{$#1\bigcup$}%
    \dimen@=\ht0 %
    \advance\dimen@ by -\dp0 %
    \sbox0{\scalebox{2}{$\m@th#1\cdot$}}%
    \advance\dimen@ by -\ht0 %
    \dimen@=.5\dimen@
    \hidewidth\raise\dimen@\box0\hidewidth
  }%
}
\def\Ind#1#2{#1\setbox0=\hbox{$#1x$}\kern\wd0\hbox to 0pt{\hss$#1\mid$\hss}
\lower.9\ht0\hbox to 0pt{\hss$#1\smile$\hss}\kern\wd0}
\def\notind#1#2{#1\setbox0=\hbox{$#1x$}\kern\wd0
\hbox to 0pt{\mathchardef\nn=12854\hss$#1\nn$\kern1.4\wd0\hss}
\hbox to 0pt{\hss$#1\mid$\hss}\lower.9\ht0 \hbox to 0pt{\hss$#1\smile$\hss}\kern\wd0}
\newcommand{\FC}{\mathfrak{C}}
\newcommand{\CG}{{\mathcal G}}
\newcommand{\CL}{{\mathcal L}}
\newcommand{\CF}{{\mathcal F}}
\newcommand{\fs}{\mathrm{fs}}
\newcommand{\sfs}{\mathrm{sfs}}
\newcommand{\conv}{\mathrm{conv}}
\newcommand{\Av}{\mathrm{Av}}
\newcommand{\supp}{\mathrm{supp}}
\newcommand{\autf}{\mathrm{Autf}}
\newcommand{\KP}{\mathrm{KP}}
\newcommand{\Las}{\mathrm{Lasc}}
\begin{document}

\begin{abstract}
We work with the $\ast$-product introduced in \cite{GHK}
and f-generic types
to describe the minimal ideals of invariant types and to classify ergodic Keisler measures in amenable NIP theories.
Moreover, we analyze the situation around the so-called Idempotent Measure Conjecture studied in \cite{CGK} and \cite{GHK}.
\end{abstract}

\maketitle

%The current file is: \currfilename.\\

\section{Introduction}
In \cite{GHK}, the first author with collaborators introduced a canonical semigroup structure on the space $S_{\bar m}^\inv(\FC,M)$ of global invariant types extending the type of a model, as well as a convolution operation on the corresponding space of Keisler measures. The goal of this paper is to understand the algebraic structure of the minimal ideals in this semigroup of types, 
and then apply it to derive properties of the invariant and idempotent Keisler measures in some contexts.
In particular, we show that the natural counterparts of several results about invariant Keisler masures and types for definable groups hold.

\subsection{Overview}
In a seminal paper \cite{newelski09}, Newelski observed that there are deep connections between topological dynamics and model theory, leading to a long and deep line of research.
Usually in model theory, we consider dynamical systems consisting of a definable group or a group of automorphisms acting on compact spaces of types, Keisler measures, or their subflows (e.g.\ finitely satisfiable types, invariant measures). Perhaps the most natural is the action of the automorphism group on these spaces. However the case of a definable group $G$ acting on the types concentrated on $G$ was studied more extensively first (cf.\ \cite{newelski09, newelski12, Anand2013, GPPb, KruPil17, WAPstable}).
Adding Keisler measures (\cite{Keisler1987}) to the picture was very fruitful (e.g.\ \cite{NIP1, Anand_Udi2011, HruPiSi13, HruKruPi2}) and resulted, in particular, in very elegant results of \cite{ArtemPierre} (description of ergodic measures for the definable group actions and a positive resolution of the original Newelski's conjecture for the definably amenable groups under NIP).
A more recent, but still influential related topic
%for this line of research 
is the theory of the definable convolution products of Keisler measures on a definable group
(\cite{Artem_Kyle, Artem_Kyle2, CGK}), which 
is a natural counterpart of the classical convolution product
of regular Borel probability measures on a locally compact group.
In this case, the classification of measures which are idempotent with respect to the definable convolution product is one of the key problems, motivated by classical results 
which bind together closed subgroups and idempotent measures in a bijective correspondence (cf., progressively extending the
context,
\cite{kawada1940probability,Wendel,Rudin,Glicksberg2,Cohen, Pym62}).

The dynamical systems involving actions of automorphism groups 
got more attention in the recent years (e.g.\ \cite{Krupinski2018, Krupinski2019, KrRz,  HruKruPi}).
It is worth mentioning that the dynamical systems involving actions of closed (in the relatively definable topology) subgroups of automorphism groups cover the case of the previously mentioned actions of definable groups. This is due to the so called \emph{affine sort construction} introduced in \cite{Gismatullin2008} and elaborated in Section 5 of \cite{GHK}.
It turns out that many results known in the definable group case have natural counterparts for the action of automorphism groups -- although the proofs often require some new techniques (e.g.\ a variant of Newelski's group chunk theorem for automorphisms, cf.\ \cite[Theorem 7.25]{GHK}).
An interesting link between the definability patterns introducd in \cite{Hrushovski2019} and the topological dynamics of automorphism group actions
led to Theorem 4.9 in \cite{GHK}, which served as the blueprint for definition of the $\ast$-product in \cite{GHK} -- a counterpart to, and a generalization of, the definable convolution of measures in a definable group.
Finally, the space of invariant types equipped with the $\ast$-product forms a semigroup, which seems to encode crucial data about the theory $T$, 
and we bring some evidence for that claim in the following paper.

\subsection{Definable groups versus automorphism groups}
Let us come back for a moment to the case of actions of a definable group.
Assume that $G$ is a group definable in an $\CL$-theory $T$ and $M\models T$ contains all parameters from the definition of $G$,
and all types
in $S_G(M)$ have unique global coheir extensions
(e.g.\ this holds for $M=\mathbb R$ as a field, and generally under NIP, if we pass to the Shelah expansion $M^{\mathrm{ext}}$).
In \cite{newelski09}, it was observed that under these assumptions, the natural action of $G(M)$ on $S_G(M)$ admits a (unique) extension to a left-continuous associative operation $\ast$ (\emph{Newelski product}) on $S_G(M)$, and more generally, if coheirs are not unique, that we can find such an operation $\ast$ on the compact space $S_G^{\fs}(\FC,M)$ of global types finitely satisfiable in $M$.

In terms of dynamics, what happens is that the space $S_G^{\fs}(\FC,M)$ ($S_G(M)$ assuming the unique coheirs) can be considered as a $G(M)$-ambit (i.e.\ a $G(M)$-flow with a distinguished point $\tp(e/\FC)$ with dense orbit) naturally isomorphic to its \emph{Ellis semigroup}, which is a compact left-continuous semigroup for any ambit.
However $\ast$ can also be defined explicitly as $p\ast q=\tp(a\cdot b/\FC)$ for some $b\models q$ and $a\models p|_{\FC b}$ (cf.\ \cite{newelski12} and \cite{Anand2013}) -- in other words, $p*q$ is obtained by applying the multiplication in $G$ to the Morley product $p\otimes q$, and this formula gives us a left-continuous semigroup operation on the larger (still compact) space $S_G^{\inv}(\FC,M)$ of global $M$-invariant types concentrating on $G$, which -- unlike its subsemigroup $S_G^\fs(\FC,M)$ -- is typically not at all a $G(M)$-ambit, so the existence of the semigroup operation cannot be similarly explained via topological dynamics.
Moreover,
we have that (\cite[Fact 1.6]{Anand2013}):
\[\big( E(S_G(M),G(M)), \circ\big)\cong (S_G^{\fs}(\FC,M),\ast)\]
(see also \cite{newelski09}, Fact 6.1 and Theorem 6.10 from \cite{Artem_Kyle} for other variants of isomorphisms of the type as the one above).

Finally, this $\ast$ operation has an affine analogue in the form of definable convolution of $M$-invariant Keisler measures, which yields the compact left semigroup $\mathfrak{M}^{\inv}_G(\FC,M)$ and its subsemigroup $\mathfrak{M}^{\fs}_G(\FC,M)$ (\cite{Artem_Kyle, Artem_Kyle2}).
Keisler measures are often studied in relation to generic types (first defined in \cite{Poizat87}), which are important in stability theory. Theory of generics generalizes nicely to the case of simple theories and to the case of NIP with a definably amenable group (\cite{ArtemPierre}),
so to a definable group $G$ with a $G$-invariant Keisler measure.

As $G$-invariant Keisler measures are idempotent with respect to the definable convolution, the natural next step in this line of research is to study idempotent measures on a (type-)definable group $G$ which is defined in a tame way (e.g.\ by a generically stable type).
In \cite{CGK}, the following conjecture was studied (\emph{fim measures} are defined in Definition~\ref{def:fim}, but briefly, they are the measure counterpart of the generically stable types):

\begin{conj}\label{conjecture: main conjecture for definable groups}
Let $G=G(\FC)$ be a definable group and $\mu \in \mathfrak{M}^{\df}_G(\FC,M)$ be fim over $M$. We know that then the right stabilizer $\stab(\mu)$ of $\mu$ is type-definable over $M$.  Then the following are conjectured to be equivalent:
\begin{enumerate}
\item $\mu$ is idempotent (with respect to definable convolution);

\item $\mu$ is the unique right $G$-invariant (equivalently, the unique left $G$-invariant) Keisler measure concentrated on $\stab(\mu)$.
\end{enumerate}
In particular, there is a correspondence between idempotent fim measures in $\mathfrak{M}^{\inv}_G(\FC,M)$ and
the so-called $M$-type-definable fim subgroups of $G(\FC)$ (cf. \cite[Definition 2.24]{GHK}).
\end{conj}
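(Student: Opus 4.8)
The plan is to prove the two implications separately; throughout write $H:=\stab(\mu)$, which by hypothesis is $M$-type-definable and (being the right stabilizer of a fim measure) is a subgroup of $G(\FC)$. The implication $(2)\Rightarrow(1)$ is the routine one: if $\mu$ is the unique right-$H$-invariant Keisler measure concentrated on $H$ (i.e.\ the normalized Haar measure of the type-definable group $H$), then, since $\mu$ is concentrated on $H$ and $H(\FC)\cdot H(\FC)=H(\FC)$, the convolution $\mu\ast\mu$ is again concentrated on $H$; and for each $g\in H(\FC)$ we have $(\mu\ast\mu)\ast\delta_g=\mu\ast(\mu\ast\delta_g)=\mu\ast\mu$ by associativity of the convolution and right-$H$-invariance of $\mu$. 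Thus $\mu\ast\mu$ is a right-$H$-invariant measure concentrated on $H$, and uniqueness forces $\mu\ast\mu=\mu$.

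The substance is in $(1)\Rightarrow(2)$, which I would model on the classical Kawada--It\^{o}/Wendel theorems --- idempotent probability measures on a compact group are exactly the Haar measures of closed subgroups --- carried out inside the $\ast$-semigroup of invariant types. It splits into two points: \emph{(i)} an idempotent fim measure $\mu$ is the normalized Haar measure of an $M$-type-definable subgroup $K\le G(\FC)$, namely $K=\supp(\mu)$ (the smallest $M$-type-definable set of $\mu$-measure one), and moreover $K=\stab(\mu)$; and \emph{(ii)} an $M$-type-definable fim group carries a \emph{unique} left- $=$ right-invariant Keisler measure. Together these give exactly $(1)\Leftrightarrow(2)$. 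For (ii), I expect to invoke --- or, using the f-generic machinery, re-derive --- the uniqueness of the invariant measure on a type-definable fsg/fim group: after transporting the statement to the semigroup $S^{\inv}_{\bar m}(\FC,M)$ via the affine sort construction of \cite{Gismatullin2008}, the f-generics of $K$ should form the minimal ideal, its unique idempotent corresponds to the Haar measure, and every $K$-invariant measure must be carried by that ideal, which forces uniqueness.

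The real obstacle is step (i) --- pinning down the support of an idempotent fim measure. The plan is to replace the use of Haar regularity in the classical argument by the symmetry/Fubini property of fim measures: taking a pair $a,b$ from a Morley sequence of $\mu$ over $\FC$, idempotence gives that the product $a\cdot b$ again realizes $\mu$; running the computation in both orders (legitimate since $\mu$ is fim) and feeding in the self-averaging characterization of fim measures from Definition~\ref{def:fim}, one aims to conclude first that $K:=\supp(\mu)$ is closed under the group operation and inversion, hence a subgroup, and then that $\mu$ is left- and right-$K$-invariant (so it is the Haar measure of $K$) and that $\stab(\mu)=K$ (if $g\in\stab(\mu)$ then $Kg$ and $K$ are both $M$-type-definable of $\mu$-measure one, so $Kg=K$ and $g\in K$; conversely right-$K$-invariance gives $K\subseteq\stab(\mu)$). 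A secondary technical nuisance is that ``concentrated on $\stab(\mu)$'' must be kept robust under $\ast$ while $\stab(\mu)$ is only type-definable; I would manage this by passing to the induced measure on the quotient $G/H$ and checking that an idempotent $\mu$ pushes forward to the point mass at the identity coset. Given (i) and (ii), the closing assertion of the conjecture --- the bijection between idempotent fim measures in $\mathfrak{M}^{\inv}_G(\FC,M)$ and $M$-type-definable fim subgroups of $G(\FC)$ --- follows formally, with $\mu\mapsto\stab(\mu)$ and $H\mapsto(\text{Haar measure of }H)$ mutually inverse (using $\stab(\mathrm{Haar}_H)=H$, cf.\ \cite[Definition~2.24]{GHK}).
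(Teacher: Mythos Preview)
This statement is a \emph{conjecture}, not a theorem, and the paper does not prove it: it is quoted from \cite{CGK} as motivation, and the paper's own contribution (Section~\ref{sec: ideals and stabilizers}) is precisely an analysis of \emph{why} it is hard and under which extra hypotheses it goes through (Theorems~\ref{thm: weak conjecture} and~\ref{thm: weak conjecture_types}). So there is no ``paper's own proof'' to compare against; your proposal is an attempted resolution of an open problem.

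Your $(2)\Rightarrow(1)$ is fine and is the easy direction (cf.\ \cite[Corollary 6.7]{GHK}). The gap is in $(1)\Rightarrow(2)$, and it is exactly your step~(i). You write that, using the symmetry/Fubini property of fim and idempotence, ``one aims to conclude first that $K:=\supp(\mu)$ is closed under the group operation and inversion, hence a subgroup, and then that $\mu$ is left- and right-$K$-invariant''. Closure of the support under the product does hold (this is Proposition~\ref{rem: idempotent support 1}/\cite[Proposition 8.2]{GHK}), but the passage from there to ``$\supp(\mu)$ is a subgroup'' or equivalently ``$\supp(\mu)\subseteq\stab(\mu)$'' is precisely the open step. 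The paper isolates it: Theorem~\ref{thm: minimal support vs stabilizer} shows $\supp(\mu)\subseteq\stab_r(\mu)$ is \emph{equivalent} to $\supp(\mu)$ being left-simple, Corollary~\ref{cor:supp_cont_pi_ls} records that this is necessary for $(2)$, and Question~\ref{question: left-simple} asks whether it follows from idempotence~$+$~fim. Your Morley-sequence sketch (``running the computation in both orders \dots\ one aims to conclude'') does not supply the missing argument; commutativity of $\otimes$ for fim measures gives you that $\supp(\mu)$ is a simple semigroup with no proper closed two-sided ideals (again Proposition~\ref{rem: idempotent support 1}), but not that it has a \emph{unique} minimal left ideal, which is what you need. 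The known positive cases (stable, NIP~$+$~KP-invariant, abelian, types in rosy theories) all obtain left-simplicity of the support by separate, context-specific arguments; absent such an argument your step~(i) is a restatement of the conjecture rather than a proof.
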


\noindent
The above conjecture is a way of translating \cite[Theorem A.4.1]{Pym62}
(which says that in a locally compact group, the convolution-idempotent regular measures are exactly the Haar measures on compact subgroups)
into the model-theoretic environment and it was confirmed in several cases:
for $G$ definable in a stable theory, for abelian $G$, 
for $G$ definable in a NIP theory with additional assumption that the measure in question is $G^{00}$-invariant,
and
for types (i.e.\ Dirac measures) in rosy theories.

Now, we move to the case of actions of automorphism groups.
Consider a monster model $\FC\models T$ and some small $M\preceq\FC$ enumerated as $\bar{m}\in M^{\bar{x}}$. By Theorem 4.14 (and Theorem 4.9) from \cite{GHK},
in NIP, we have:
\[\big( E(S_{\bar{x}}(M),\aut(M)), \circ\big)\cong (S_{\bar{m}}^{\sfs}(\FC,M),\ast),\]
where $E(S_{\bar{x}}(M),\aut(M))$ is the Ellis group of $S_{\bar{x}}(M)$ (considered as an $\Aut(M)$-ambit, with the distinguished point $\tp(\bar m/M)$) and $S_{\bar{m}}^{\sfs}(\FC,M)$ stands for the global types extending $\tp(\bar{m}/\emptyset)$ which are \emph{strongly finitely satisfiable} in $M$
(see \cite[Definition 2.16]{GHK}), and $\ast$ stands for the $\ast$-product (cf.\ Definitions~\ref{def:star.definition} and~\ref{def:star.definition2}).
Again, this definition of $\ast$ makes sense for $M$-invariant global types extending $\tp(\bar{m}/\emptyset)$ and so $(S^{\inv}_{\bar{m}}(\FC,M),\ast)$ forms a left-continuous compact Hausdorff semigroup (cf.\ Lemma~\ref{lemma:ast_semigroup_for_types}). The definition of the $\ast$-product
has an affine extension in the form of the $\ast$-product of Borel $M$-definable
Keisler measures concentrated on $\tp(\bar{m}/\emptyset)$ (see Definition~\ref{def:star.definition} for the details). 
The following conjecture, studied in \cite{GHK}, is the natural counterpart and a generalization of Conjecture~\ref{conjecture: main conjecture for definable groups} (via the aforementioned affine sort construction, see the second paragraph of \cite[Section 6]{GHK} for a more detailed discussion):

\begin{conj}[Idempotent Measure Conjecture]\label{conjecture: main conjecture in the introduction}
Let $\mu \in \mathfrak{M}^{\df}_{\bar m}(\FC,M)$ be fim over $M$. 
We know from \cite[Lemma 2.26]{GHK} that $\stab_l(\mu)=G_{\pi,\FC}$ for some partial type $\pi(\bar x;\bar y)\vdash\bar{x}\equiv_{\emptyset}\bar{y}$.
Then it is conjectured that the following conditions are equivalent: 
\begin{enumerate}
\item $\mu$ is an idempotent;
\item $\mu$ is the unique (left) $G_{\pi,\FC}$-invariant measure in $\mathfrak{M}^{\inv}_{\pi(\bar{x};\bar{m})}(\FC,M)$.
\end{enumerate}
In particular, there is a correspondence between idempotent fim measures in $\mathfrak{M}^{\inv}_{\bar m}(\FC,M)$ and
the so-called
relatively $\bar m$-type-definable over $M$ fim subgroups of $\aut(\FC)$.
\end{conj}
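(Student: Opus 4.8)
The plan is to mirror the classical proof that the convolution-idempotent regular probability measures on a locally compact group are exactly the Haar measures of its compact subgroups, replacing each ingredient by its model-theoretic analogue: the $\ast$-product in place of convolution, the relatively type-definable fim group $G_{\pi,\FC}$ in place of a compact subgroup, the generic stability of fim measures in place of Fubini's theorem, and this paper's description of the minimal ideals of $(S^{\inv}_{\bar m}(\FC,M),\ast)$, the classification of ergodic measures, and the f-generic machinery in place of the structure theory of compact groups.

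For the direction $(2)\Rightarrow(1)$ I would argue directly. Assuming $\mu$ is the unique left $G_{\pi,\FC}$-invariant member of $\mathfrak{M}^{\inv}_{\pi(\bar x;\bar m)}(\FC,M)$, I first check that $\mu\ast\mu$ again lies in $\mathfrak{M}^{\inv}_{\pi(\bar x;\bar m)}(\FC,M)$: since $\pi(\bar x;\bar y)$ cuts out a subgroup, the types concentrated on $\pi(\bar x;\bar m)$ are exactly the translates of $\tp(\bar m/\FC)$ by $G_{\pi,\FC}$, a set closed under the group operation underlying $\ast$, so the same persists at the level of measures. Next, $\mu\ast\mu$ is left $G_{\pi,\FC}$-invariant, because $\sigma\cdot(\alpha\ast\beta)=(\sigma\cdot\alpha)\ast\beta$ and $\sigma\cdot\mu=\mu$ for every $\sigma\in G_{\pi,\FC}=\stab_l(\mu)$. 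By the assumed uniqueness, $\mu\ast\mu=\mu$.

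The substantial direction is $(1)\Rightarrow(2)$. Left $G_{\pi,\FC}$-invariance of $\mu$ is built into the hypothesis $\stab_l(\mu)=G_{\pi,\FC}$, so the work is (a) to show $\mu$ is concentrated on $\pi(\bar x;\bar m)$ and (b) to show it is the unique such left-invariant measure. For (a): from $\mu\ast\mu=\mu$ one gets $\supp(\mu)\ast\supp(\mu)\subseteq\supp(\mu)$, i.e.\ $\supp(\mu)$ is a $\ast$-subsemigroup of $S^{\inv}_{\bar m}(\FC,M)$; using that $\mu$ is fim, so that its support behaves like that of a generically stable type, I would show this subsemigroup sits inside a minimal left ideal, and then invoke the paper's description of minimal left ideals (on which the Ellis group acts, essentially, simply transitively) together with a Newelski-style group-chunk argument for automorphisms to identify $\supp(\mu)$ with a single $G_{\pi,\FC}$-orbit, namely the locus of $\pi(\bar x;\bar m)$. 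For (b): given any left $G_{\pi,\FC}$-invariant $\nu\in\mathfrak{M}^{\inv}_{\pi(\bar x;\bar m)}(\FC,M)$, I would form the iterated $\ast$-integral of a formula $\varphi$ against $\nu$ and against $\mu$ — legitimate in either order because $\mu$ is fim — and then apply the left $G_{\pi,\FC}$-invariance of $\nu$ and of $\mu$ exactly as in the classical computation, exhibiting $\nu(\varphi)$ as a quantity depending only on $\mu$; since $\mu$ itself is such a $\nu$, this forces $\nu=\mu$. The f-generic theory of the fim group $G_{\pi,\FC}$ — available here via the affine sort construction and the results of this paper — plays the role that uniqueness of the $G^{00}$-invariant measure plays for definably amenable NIP groups, and supplies the ``left-invariant function is constant'' step of the classical argument.

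Finally, the ``in particular'' clause follows formally: $\mu\mapsto\stab_l(\mu)=G_{\pi,\FC}$ sends idempotent fim measures to relatively $\bar m$-type-definable over $M$ fim subgroups, with inverse $H=G_{\pi,\FC}\mapsto\mu_H$, the (unique, by part (b)) left-invariant measure in $\mathfrak{M}^{\inv}_{\pi(\bar x;\bar m)}(\FC,M)$, which is fim and, by $(2)\Rightarrow(1)$, idempotent; injectivity is immediate from the supports, the identity $\stab_l(\mu_H)=H$ follows since $\sigma\cdot\mu_H$ is concentrated on $\pi(\bar x;\sigma\bar m)$ and equals $\mu_H$ only for $\sigma\in H$, and surjectivity is $(1)\Rightarrow(2)$. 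I expect the main obstacle to be step (a) of the hard direction: a priori an idempotent fim measure could be supported on a proper ``sub-Haar'' piece of the $G_{\pi,\FC}$-orbit of $\bar m$, and ruling this out is exactly where the interplay of generic stability, the minimal-ideal structure, and the group-chunk argument is delicate — it is also the point at which one expects to need an extra hypothesis (such as $G_{\pi,\FC}^{00}$-invariance of $\mu$, or restriction to Dirac measures over a rosy theory) should the full conjecture prove stubborn.
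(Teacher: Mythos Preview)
This statement is a \emph{conjecture}, not a theorem: the paper does not prove it, and indeed explicitly isolates the missing steps as open questions (Questions~\ref{question: diamond}, \ref{question: diamond2}, \ref{question: left-simple}). So there is no ``paper's own proof'' to compare against; what the paper offers is a conditional result (Theorem~\ref{thm: weak conjecture}) under three extra hypotheses: superfim instead of fim, left-simplicity of $\supp(\mu)$, and property~\eqref{eq:restricted_lascar_inv}.

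Your $(2)\Rightarrow(1)$ is essentially the known argument (cf.\ \cite[Lemma 6.5, Proposition 6.6]{GHK} and Fact~\ref{rem: better 6.3} here), but the identity you wrote is backwards: in this setting the $\Aut(\FC)$-action corresponds to \emph{right} $\ast$-multiplication (Corollary~\ref{rem: aut_is_*}), so $\sigma\cdot(\alpha\ast\beta)=\alpha\ast(\sigma\cdot\beta)$, not $(\sigma\cdot\alpha)\ast\beta$. The conclusion survives, since $\sigma\cdot(\mu\ast\mu)=\mu\ast(\sigma\cdot\mu)=\mu\ast\mu$ for $\sigma\in\stab_l(\mu)$.

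The genuine gap is in $(1)\Rightarrow(2)$, and it lies precisely where you hedge. Your step~(a) says ``I would show this subsemigroup sits inside a minimal left ideal'' and then ``identify $\supp(\mu)$ with a single $G_{\pi,\FC}$-orbit''. The first is equivalent to left-simplicity of $\supp(\mu)$ (Remark~\ref{rem:left_simple_min_ideal}, Theorem~\ref{thm: minimal support vs stabilizer}), which is open for fim idempotents (Question~\ref{question: left-simple}); the second requires, beyond left-simplicity, the condition~\eqref{eq:restricted_lascar_inv} (Lemma~\ref{lemma: diamond}), also open in general. The ``Newelski-style group-chunk'' argument you invoke is exactly how~\eqref{eq:restricted_lascar_inv} is verified in the \emph{stable} case, but no such argument is known otherwise. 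The ``paper's description of minimal left ideals'' you plan to use (Theorem~\ref{thm: fGen and injectivity on Ellis groups}) applies to $S^{\inv}_{\bar m}(\FC,M)$ in \emph{amenable NIP} theories, not to $\supp(\mu)$ for an arbitrary fim idempotent. Finally, your step~(b) uniqueness argument via commuting the order of integration is only known to work for \emph{superfim} measures (\cite[Lemma 6.12, Corollary 6.13]{GHK}), not fim in general. To your credit, your closing paragraph correctly flags step~(a) as the crux and anticipates that extra hypotheses may be needed; that diagnosis matches the paper's.
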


The above conjecture puts some light on the structure of closed subgroups (in the relatively definable topology) of $\aut(\FC)$, thus it is interesting for studying automorphism groups. 
Mirroring the aforementioned results of \cite{CGK} for definable groups,
%Results of \cite{GHK} confirmed 
Conjecture~\ref{conjecture: main conjecture in the introduction}
was confirmed in \cite{GHK} for
stable theories, 
NIP theories with additional assumption that the measure in question is KP-invariant, and types in all rosy theories.

\subsection{Results of this paper}
We provide results on two topics, both related to the structure of the semigroup $(S^{\inv}_{\bar{m}}(\FC,M),\ast)$:
the Idempotent Measure Conjecture and the decomposition of invariant measures in NIP.

\subsubsection*{Idempotent Measure Conjecture}
We analyzed \cite{Pym62}, \cite{CGK} and \cite{GHK} to 
notice some
%not so difficult 
dependencies between the assumptions in the theorems confirming Conjectures~\ref{conjecture: main conjecture for definable groups} and~\ref{conjecture: main conjecture in the introduction},
leading to a proof of a variant of Conjecture~\ref{conjecture: main conjecture in the introduction} with additional hypotheses in the form of Theorem~\ref{thm: weak conjecture}. We also provide evidence that this theorem is a reasonable stepping stone in the proof of a more refined version of the full conjecture.

In more detail, we observe that
(2) in Conjecture~\ref{conjecture: main conjecture in the introduction} can only hold if
the support of $\mu$ is a left-simple semigroup (see Corollary~\ref{cor:supp_cont_pi_ls} and the surrounding discussion).
In \cite{CGK}, left-simplicity of the support
is equivalent to the support transitivity for idempotent fim measures 
(cf. \cite[Proposition 3.51]{CGK}).
In fact, the support is left-simple for all the confirmed 
instances
of Conjecture~\ref{conjecture: main conjecture in the introduction} (see e.g.\ \cite[Proposition 3.55]{CGK},  \cite[Theorem 6.31 and Corollary 8.25]{GHK}).

Then we introduce property \eqref{eq:restricted_lascar_inv}, which is extracted from the proofs of the results confirming Conjectures~\ref{conjecture: main conjecture for definable groups} and~\ref{conjecture: main conjecture in the introduction} in the stable case. Property \eqref{eq:restricted_lascar_inv} is a property 
of the measure $\mu$ related to invariance over small models,
for instance it is true if $\mu$ is Lascar-invariant (cf.\ Remark~\ref{rem: diamond and Lascar}).
Lemma~\ref{lemma: diamond} shows that \eqref{eq:restricted_lascar_inv} follows from (2) from Conjecture~\ref{conjecture: main conjecture in the introduction}, 
and -- as a partial converse -- that left-simplicity of the support together with \eqref{eq:restricted_lascar_inv} and (1) imply (2) in that conjecture. 
We see that if Conjecture~\ref{conjecture: main conjecture in the introduction} holds, then a measure $\mu$ which satisfies it nontrivially (i.e.\ satisfies (1) or (2)) must have property \eqref{eq:restricted_lascar_inv}. 
We obtain Theorem~\ref{thm: weak conjecture} (and its variant for types, Theorem~\ref{thm: weak conjecture_types}) 
which is a weaker variant of Conjecture~\ref{conjecture: main conjecture in the introduction} (with added assumptions: \eqref{eq:restricted_lascar_inv}, left-simplicity of the support and replacing fim by superfim). 
After that,
we note that the already known results confirming 
Conjecture~\ref{conjecture: main conjecture in the introduction}
are instances of 
Theorems~\ref{thm: weak conjecture} and~\ref{thm: weak conjecture_types}
(caveat: we do not claim to give new proofs of those instances, only that the existing proofs can be seen as verifying the additional hypotheses of Theorems~\ref{thm: weak conjecture} and~\ref{thm: weak conjecture_types}).
Therefore it is natural to ask if the left-simplicity of the support and the property \eqref{eq:restricted_lascar_inv} follow automatically (see Questions~\ref{question: diamond},~\ref{question: diamond2},~\ref{question: left-simple}).

\subsubsection*{Decomposition of invariant measures}
The second part of the paper concerns invariant Keisler measures in NIP theories, 
which are important subclass of idempotent measures.
The main result here is Theorem~\ref{thm: ergodic description}, which gives a description of ergodic $\aut(\FC)$-invariant measures
for a countable NIP theory, analogous to the one obtained in the definable group case in \cite{ArtemPierre}.

To do it, we define the notion of f-generic types for the action of automorphisms. We base our definition on the results of \cite{HruKruPi}
and then in Theorem~\ref{thm: fGen and injectivity on Ellis groups} show that f-generics
(which in this case are simply the types not forking over $\emptyset$)
form the unique minimal left ideal in $(S^{\inv}_{\bar{m}}(\FC,M),\ast)$, mimicking the approach first
developed by the second author and Kyle Gannon for definably amenable groups in \cite[Section 3]{GR25}.
Then in Corollary~\ref{cor: description of idempotent measures with f-generics}, we provide a description of the semigroup structure of the support of an idempotent measure which has an f-generic type in its support -- which is equivalent to being KP-invariant.
A special role is played by the map $\rho\colon S^{\inv}_{\bar{m}}(\FC,M)\to\gal_{\KP}(T)$
given by $\tp(\sigma(\bar{m})/\FC)\mapsto\sigma^{-1}\autf_{\KP}(\FC')$, where $\sigma\in\aut(\FC')$ and $\FC'\succeq\FC$ is a bigger monster model.
First of all, this map is injective on the Ellis groups of $(S^{\inv}_{\bar{m}}(\FC,M),\ast)$ (by Theorem~\ref{thm: fGen and injectivity on Ellis groups}).
Second, it is used in the quite precise desciption of KP-invariant measures in NIP (Corollary~\ref{cor: description of idempotent measures with f-generics}).
Finally, the map $\rho$ is crucial for the characterization of the ergodic measures in a countable, amenable NIP theory.
Roughly speaking, every such a measure is induced from the normalized Haar measure on $\gal_{\KP}(T)$ via a pullback along $\rho$ to an Ellis group in $(S^{\inv}_{\bar{m}}(\FC,M),\ast)$, see Definition~\ref{def: almost pullback}
and Theorem~\ref{thm: ergodic description}.
We see (Lemma~\ref{lemma: approximation 2}) that, in countable NIP theory, every invariant measure is built from these basic components supported on Ellis groups of the minimal left ideal of f-generic types.

Let us comment here on a possible generalization.
As we already mentioned, the affine sort construction allows us to study definable groups as closed (in the relatively definable topology) subgroups of $\aut(\FC)$. In fact, we could try to provide a more general description of ergodic measures in NIP, that is ergodic measures for the action of a closed (in the relatively definable topology) subgroup of $\aut(\FC)$. 
We decided to not to do this for the sake of a more transparent exposition.
Moreover, the other most interesting case of such an action, i.e.\ the case
of a definable group, is already covered in \cite{ArtemPierre}, 
without the assumption about the countability of the theory.

\subsection{Structure}
Section 2 contains the definition and some basic facts related to the $\ast$-product and Keisler measures. Section 3 contains the results related to the Idempotent Measure Conjecture.
In Section 4 we study the notion of f-generics, proving that under NIP, if they exist, they form the unique minimal left ideal in $S_{\bar m}^\inv(\FC,M)$. The last section, Section 5, is about ergodic measures in NIP.

\subsection{Acknowledgements} We would like to thank Krzysztof Krupiński for several helpful discussions and finding a mistake in an early draft of the paper, and Kyle Gannon for sharing his insight with the second author.

\section{Preliminaries}
Consider an $\CL$-theory $T$ with monster model $\FC$ and a bigger monster model $\FC'\succeq\FC$.
Fix a small model $M\preceq\FC$ and its enumeration $\bar{m}$, an enumeration $\bar{c}\supseteq\bar{m}$ of $\FC$ and let $\bar n$ with $\bar{m}\subseteq\bar{n}\subseteq\bar{c}$ enumerate a model $N$ with $M\preceq N\preceq \FC$ (possibly $M=N$ or $N=\FC$).
%Moreover, let $N\preceq N'\preceq \FC$.
Consider tuples of variables $\bar{x}$ and $\bar{y}$ such that $\bar{n}\in\FC^{\bar{x}}$ and $\bar{n}\in\FC^{\bar{y}}$.
Furthermore, let us fix tuples of variables $\bar{x}'\subseteq\bar{x}$ and $\bar{y}'\subseteq\bar{y}$
which correspond to the tuple $\bar{m}$ (note that in the case of $\bar{n}=\bar{m}$ we have that $\bar{x}'=\bar{x}$ and $\bar{y}'=\bar{y}$).
Because $\bar{y}$ is a tuple also corresponding to the enumeration of model $N$, every formula $\psi(\bar{x};\bar{b})$, with $\bar{b}$ from $\FC$, can be re-arranged so that $\bar{b}\in \FC^{\bar{y}}$.
We will often use this without further explanation.

We consider types extending the type $\tp(\bar{n}/\emptyset)$, i.e.\ a closed subset of the space $S_{\bar{x}}(\FC)$ denoted by ``$S_{\bar{n}}(\FC)$''. In a similar way, we consider the space $\mathfrak{M}_{\bar{n}}(\FC)$ of Keisler measures concentrated on the type $\tp(\bar{n}/\emptyset)$.
More generally, if $F(\bar{x})$ is a partial type (with parameters from $\FC$) then $S_F(\FC)$ stands for the closed subset of $S_{\bar{x}}(\FC)$ of global types extending the partial type $F(\bar{x})$.
There are other spaces involving this notational rule, e.g.\ $\mathfrak{M}^{\inv}_{F}(\FC,M)$ stands for the space of $M$-invariant Keisler measures concentrated on the closed subset $S_F(\FC)$.
We hope it is clear and does not need further explanation.

W need a slight generalization of the definition of the $\ast$-product of Keisler measures from \cite{GHK},
where we allow to have measures/types over longer tuple of variables (e.g.\ over tuples enumerating $\FC$).
Let us introduce the needed notions now.
The definition involves a map given as follows.
For each type $p(\bar{x})\in S_{\bar{n}}(\FC)$ there exists $\sigma\in\aut(\FC')$ such that $p(\bar{x})=\tp(\sigma(\bar{n})/\FC)$.
Take $\bar{b}\in \FC^{\bar{y}}$ and consider the following map
\[h_{\bar{b}}\colon S_{\bar{n}}(\FC)\to S_{\bar{y}}(N)\]
defined by:
\[h_{\bar{b}}:\tp(\sigma(\bar{n})/\FC)\mapsto\tp(\sigma^{-1}(\bar{b})/N).\]
It turns out that $h_{\bar{b}}$ is a well-defined continuous map. For example, continuity follows by
\[h_{\bar{b}}^{-1}[\theta(\bar{n};\bar{y})]=[\theta(\bar{x};\bar{b})]\subseteq S_{\bar{n}}(\FC),\]
where $\theta(\bar{x};\bar{y})\in\CL$.

\begin{remark}
    Let $\mu\in\mathfrak{M}_{\bar{n}}(\FC)$, $\theta(\bar{x};\bar{y})\in\CL$ and $\bar{b}\in\FC^{\bar{y}}$. Then
    \[\big((h_{\bar{b}})_{\ast}\mu\big)\big(\theta(\bar{n};\bar{y})\big)=\mu\big(\theta(\bar{x};\bar{b})\big).\]
\end{remark}

We recover the original setting of \cite{GHK} if $N=M$.
However, there is a new interesting case when $N=\FC$ (and then $\bar{n}=\bar{c}$) for which we define the inverse of a Keisler measure:

\begin{definition}\label{def:inverse}
    For $\mu\in\mathfrak{M}_{\bar{c}}(\FC)$, we set
    \[\mu^{-1}:=(h_{\bar{c}})_{\ast}\mu\in\mathfrak{M}_{\bar{y}}(\FC).\]
\end{definition}
\noindent
In fact, in the above definition, we have $\mu^{-1}\in \mathfrak{M}_{\bar{c}}(\FC)$.

Now, let $\res_M\colon S_{\bar{x}}(N)\to S_{\bar{x}}(M)$ denote the standard restriction map.
Sometimes, to keep track of the variables appearing in Keisler measures, we use a subscript:
``$\mu_{\bar{y}}$'' means that the measure $\mu$ is considered in variables $\bar{y}$.
We consider the Morley product $\otimes$ of Keisler measures (and the Morley product of invariant types) and so-called \emph{fiber functions} $F_{\mu}^{\varphi}$. These, and many  more, are standard notions in the topic of Keisler measures. The reader may consult \cite{Guide_NIP} or \cite{KyleThesis} for a nice exposition on Keisler measures.

\begin{definition}\label{def:star.definition}
    Let
    $\mu\in\mathfrak{M}_{\bar{x}}^{\inv}(\FC,M)$
    be Borel $M$-definable
    and let
    $\nu\in\mathfrak{M}_{\bar{n}}(\FC)$ (in variable $\bar{x}$).
We define a measure $\mu\ast\nu\in\mathfrak{M}_{\bar{x}}(\FC)$  as follows:
\begin{IEEEeqnarray*}{rCl}
(\mu\ast\nu)\big(\theta(\bar{x};\bar{b})\big) &:=&
\Big(\mu_{\bar{x}}\otimes \big( (h_{\bar{b}})_\ast(\nu_{\bar{x}}) \big)_{\bar{y}} \Big)\big( \theta(\bar{x};\bar{y})\big) \\
&=& \int\limits_{q(\bar{y})\in S_{\bar{y}}(M)} F^{\theta(\bar{x};\bar{y})}_{\mu_{\bar{x}}}(q)\,d(\res_M)_\ast\Big(\big( (h_{\bar{b}})_\ast(\nu_{\bar{x}})\big)_{\bar{y}}\Big) \\
&=& \int\limits_{q(\bar{x})\in S_{\bar{x}}(\FC)} \Big( F^{\theta(\bar{x};\bar{y})}_{\mu_{\bar{x}}}\circ\res_M\circ h_{\bar{b}}\Big)(q)\,d\nu_{\bar{x}} \\
&=& \int\limits_{q(\bar{x})\in S_{\bar{n}}(\FC)} \Big( F^{\theta(\bar{x};\bar{y})}_{\mu_{\bar{x}}}\circ\res_M\circ h_{\bar{b}}\Big)(q)\,d\nu_{\bar{x}}
\end{IEEEeqnarray*}
where $\bar{b}\in \FC^{\bar{y}}$ and $\varphi(\bar{x};\bar{y})\in\CL_{\bar{x};\bar{y}}$.
%\todo{Define $\otimes$}
\end{definition}

Many properties of Keisler measures are preserved under $\ast$ (cf.\ Lemma 4.23 in \cite{GHK} when working with the original definition). Here, we are interested in forming some semigroup structures, thus we provide the following lemma.

\begin{lemma}\label{lemma:ast_preserves_inv}
    If $\mu,\nu\in\mathfrak{M}^{\inv}_{\bar{n}}(\FC,M)$ are Borel $M$-definable,
    then $\mu\ast\nu\in\mathfrak{M}^{\inv}_{\bar{n}}(\FC,M)$.
\end{lemma}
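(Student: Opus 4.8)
The plan is to verify the two defining features of $\mathfrak{M}^{\inv}_{\bar{n}}(\FC,M)$ separately: that $\mu\ast\nu$ concentrates on $S_{\bar{n}}(\FC)$, and that it is $M$-invariant (and incidentally Borel $M$-definable, if that is needed for the semigroup operation to make sense). The concentration on $S_{\bar{n}}(\FC)$ should be essentially immediate from the last line of Definition~\ref{def:star.definition}: the integral defining $(\mu\ast\nu)(\theta(\bar x;\bar b))$ is taken over $S_{\bar{n}}(\FC)$, and the composite $F^{\theta(\bar{x};\bar{y})}_{\mu_{\bar{x}}}\circ\res_M\circ h_{\bar{b}}$ only ever sees the behaviour of $\mu$ on types extending $\tp(\bar n/\emptyset)$ in the $\bar x$-variable; so if $\theta(\bar x;\bar b)$ is inconsistent with $\tp(\bar n/\emptyset)$ then the corresponding fiber function vanishes identically and the integral is $0$, while if $\theta(\bar x;\bar b)\in\tp(\bar n/\emptyset)$ it is $1$. (Here one uses that $\mu\in\mathfrak{M}^{\inv}_{\bar n}(\FC,M)$ already concentrates on $\tp(\bar n/\emptyset)$.)

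For $M$-invariance, I would argue as follows. Fix $\theta(\bar x;\bar y)\in\CL$ and tuples $\bar b,\bar b'\in\FC^{\bar y}$ with $\bar b\equiv_M \bar b'$; it suffices to show $(\mu\ast\nu)(\theta(\bar x;\bar b))=(\mu\ast\nu)(\theta(\bar x;\bar b'))$. Pick $\sigma\in\aut(\FC/M)$ with $\sigma(\bar b)=\bar b'$. The key computation is to track how $\sigma$ acts through the formula
\[
(\mu\ast\nu)\big(\theta(\bar x;\bar b)\big)=\int_{q\in S_{\bar n}(\FC)}\big(F^{\theta(\bar x;\bar y)}_{\mu_{\bar x}}\circ\res_M\circ h_{\bar b}\big)(q)\,d\nu_{\bar x}.
\]
Since $\sigma$ fixes $M$ pointwise, $\res_M\circ h_{\bar b'}=\res_M\circ h_{\bar b}\circ\sigma^{-1}$ as maps $S_{\bar n}(\FC)\to S_{\bar y}(M)$ — this is the naturality of the $h_{\bar b}$ construction under the automorphism, using the explicit formula $h_{\bar b}(\tp(\tau(\bar n)/\FC))=\tp(\tau^{-1}(\bar b)/N)$ and the fact that $\sigma$ does not move $N$-types. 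Then a change of variables $q\mapsto\sigma(q)$ in the integral, together with $M$-invariance of $\nu$ (so that $\sigma_*\nu_{\bar x}=\nu_{\bar x}$ on the relevant algebra), reduces $(\mu\ast\nu)(\theta(\bar x;\bar b'))$ back to $(\mu\ast\nu)(\theta(\bar x;\bar b))$. One should double-check the measurability bookkeeping — that all the pushforwards and the fiber function $F^{\theta}_{\mu_{\bar x}}$ are Borel, which is where Borel $M$-definability of $\mu$ enters — but this is routine and parallels \cite[Lemma 4.23]{GHK}.

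I would then note that Borel $M$-definability of $\mu\ast\nu$ (if we want it, e.g.\ to iterate $\ast$) follows by the same type of argument: the fiber function $F^{\theta(\bar x;\bar y)}_{\mu\ast\nu}$ can be written as an integral of $F^{\theta}_{\mu_{\bar x}}$-values against the $\nu$-family, and composition/integration preserves Borel-ness over $M$; again this mirrors the corresponding statement in \cite{GHK}.

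The main obstacle, such as it is, is purely bookkeeping: making sure the three equal expressions in Definition~\ref{def:star.definition} are used consistently, that the pushforward $(h_{\bar b})_*\nu_{\bar x}$ lives in the variables $\bar y$ as indicated, and that the change of variables by $\sigma$ is applied to the correct measure on the correct space (the outer integral is against $\nu_{\bar x}$ on $S_{\bar n}(\FC)$, not against $\mu$). There is no conceptual difficulty — the whole content is that $\ast$ is built from the Morley product $\otimes$ and the natural maps $h_{\bar b}$, both of which are automorphism-equivariant and preserve $M$-invariance — but the longer tuples $\bar n$ (as opposed to just $\bar m$) mean one must be slightly careful that everything still only depends on $M$-types, which is exactly what the definition of $h_{\bar b}$ (landing in $S_{\bar y}(N)$ but composed with $\res_M$) guarantees.
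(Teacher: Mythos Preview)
Your proposal is correct and is exactly the kind of argument the paper is alluding to: the paper omits the proof entirely, saying only that it is ``a straightforward application of definitions (e.g.\ see the proof of Lemma 4.23 in \cite{GHK})'', and your unpacking of $M$-invariance via the change of variables $q\mapsto\sigma(q)$ and the equivariance $h_{\bar b'}=h_{\bar b}\circ\sigma^{-1}$ is precisely that straightforward application. One small correction: your justification ``$\sigma$ does not move $N$-types'' is not right when $N\supsetneq M$ (since $\sigma\in\Aut(\FC/M)$ need not fix $N$), but the identity $h_{\bar b'}=h_{\bar b}\circ\sigma^{-1}$ holds anyway directly from the formula $h_{\bar b}(\tp(\tau(\bar n)/\FC))=\tp(\tau^{-1}(\bar b)/N)$, so this does not affect the argument.
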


\begin{proof}
We omit the proof as it is a straightforward application of definitions
(e.g.\ see the proof of Lemma 4.23 in \cite{GHK}).
\end{proof}

The special case of $\ast$-product for types does not require to assume Borel definability due to the way the Morley product works for types. More precisely:

\begin{definition}\label{def:star.definition2}
    Let $p(\bar{x})\in S^{\inv}_{\bar{n}}(\FC,M)$ and $q(\bar{x})\in S_{\bar{n}}(\FC)$ (not necessarily $M$-invariant!),
    then we set:
    \[(p\ast q)\big(\theta(\bar{x};\bar{b})\big):= \Big(p(\bar{x})\otimes h_{\bar{b}}\big(q(\bar{x})\big)\Big)(\theta(\bar{x};\bar{y})\big),\]
    where $\bar{b}\in \FC^{\bar{y}}$ and $\varphi(\bar{x};\bar{y})\in\CL_{\bar{x};\bar{y}}$ (cf.\ Definition 4.28 in \cite{GHK}).
\end{definition}

Let us note that $h_{\bar{b}}(q)$ is not a global $M$-invariant type (the usual situation in the definition of the Morley product for types), but the above definition still makes sense due to the $M$-invariance of $p$.

If $p$ is additionally Borel $M$-definable, then $\delta_{p\ast q}=\delta_p\ast\delta_q$ (the $\ast$-product for types coincides with the $\ast$-product for measures).
And similarly to the measure case (Lemma~\ref{lemma:ast_preserves_inv}), we have that
$p\ast q\in S^{\inv}_{\bar{n}}(\FC,M)$ provided $p(\bar{x}),q(\bar{x})\in S^{\inv}_{\bar{n}}(\FC,M)$.

We also have an alternative definition of the $*$-product when the right argument is a type:

\begin{proposition}\label{prop: alter_ast}
    Let $\mu\in\mathfrak{M}^{\inv}_{\bar{x}}(\FC,M)$ be Borel $M$-definable, $p(\bar{x})\in S^{\inv}_{\bar{n}}(\FC,M)$ (not necessarily Borel $M$-definable)
and let $q(\bar{x})\in S_{\bar{n}}(\FC)$ with $q=\tp(\tau(\bar{n})/\FC)$ for some
$\tau\in\aut(\FC')$.
Then
\begin{align*}
    &&\mu\ast q&=\tau(\mu|_{\FC'})|_{\FC},\textrm{ and}&\textrm{(Def.~\ref{def:star.definition})}\\
    &&p\ast q&=\tau(p|_{\FC'})|_{\FC}&\textrm{(Def.~\ref{def:star.definition2})}
\end{align*}
(where $\mu|_{\FC'}$ and $p|_{\FC'}$ stand for the unique $M$-invariant extensions over $\FC'$).
\end{proposition}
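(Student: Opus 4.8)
The plan is to verify both identities formula-by-formula, by unwinding Definitions~\ref{def:star.definition} and~\ref{def:star.definition2} together with the description of $h_{\bar b}$. Fix $\theta(\bar x;\bar y)\in\CL$ and $\bar b\in\FC^{\bar y}$. The starting observation is that, writing $q=\tp(\tau(\bar n)/\FC)$, we may take $\sigma=\tau$ in the definition of $h_{\bar b}$, so $h_{\bar b}(q)=\tp(\tau^{-1}(\bar b)/N)$ and hence $\res_M(h_{\bar b}(q))=\tp(\tau^{-1}(\bar b)/M)$. Since $\FC$ is a monster model and $\bar y$ is a small tuple, this type over $M$ is realized by some $\bar b'\in\FC^{\bar y}$, i.e.\ $\bar b'\equiv_M\tau^{-1}(\bar b)$.

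For the measure identity, plugging $\nu=\delta_q$ into Definition~\ref{def:star.definition} collapses the integral to a point evaluation: $(\mu\ast q)(\theta(\bar x;\bar b))=\bigl(F^{\theta(\bar x;\bar y)}_{\mu}\circ\res_M\circ h_{\bar b}\bigr)(q)=F^{\theta(\bar x;\bar y)}_{\mu}(\tp(\tau^{-1}(\bar b)/M))=\mu(\theta(\bar x;\bar b'))$, the last step being the defining property of the fiber function evaluated at the realization $\bar b'$. Next I would use that $\mu|_{\FC'}$ — the unique $M$-invariant extension, which is also the Borel $M$-definable extension obtained by reading off the same fiber functions on parameters from $\FC'$ — extends $\mu$ and is $M$-invariant, so $\mu(\theta(\bar x;\bar b'))=(\mu|_{\FC'})(\theta(\bar x;\bar b'))=(\mu|_{\FC'})(\theta(\bar x;\tau^{-1}(\bar b)))$, the last equality because $\bar b'\equiv_M\tau^{-1}(\bar b)$. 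Finally, $\tau\in\aut(\FC')$ acts on measures over $\FC'$ by $(\tau\lambda)(\theta(\bar x;\bar a))=\lambda(\theta(\bar x;\tau^{-1}(\bar a)))$ (which is forced by $\tau(\tp(\bar d/\FC'))=\tp(\tau(\bar d)/\FC')$), and since $\bar b\in\FC$ the last quantity equals $(\tau(\mu|_{\FC'}))(\theta(\bar x;\bar b))=\bigl((\tau(\mu|_{\FC'}))|_{\FC}\bigr)(\theta(\bar x;\bar b))$, which is the asserted value.

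For the type identity the same chain runs with Dirac measures, and it does not need Borel definability, since an $M$-invariant type always has a unique $M$-invariant global extension to $\FC'$: unwinding Definition~\ref{def:star.definition2} (with the Morley-product-type operation against $h_{\bar b}(q)$ being legitimate by $M$-invariance of $p$, as in the remark there), $\theta(\bar x;\bar b)\in p\ast q$ iff $\theta(\bar x;\bar b')\in p$, iff $\theta(\bar x;\tau^{-1}(\bar b))\in p|_{\FC'}$ (by $M$-invariance of $p|_{\FC'}$ and $\bar b'\equiv_M\tau^{-1}(\bar b)$), iff $\theta(\bar x;\bar b)\in\tau(p|_{\FC'})$, iff $\theta(\bar x;\bar b)\in(\tau(p|_{\FC'}))|_{\FC}$; when $p$ is additionally Borel $M$-definable one can instead just combine $\delta_{p\ast q}=\delta_p\ast\delta_q$ with the measure case. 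The only point that requires care — rather than a genuine obstacle — is that $\tau^{-1}(\bar b)$ typically lies in $\FC'\setminus\FC$, so one cannot evaluate $\mu$ (resp.\ $p$) at $\theta(\bar x;\tau^{-1}(\bar b))$ over $\FC$ directly; this is exactly why the statement is phrased via the $M$-invariant extension to $\FC'$, and why the verification must route through $\mu|_{\FC'}$ (resp.\ $p|_{\FC'}$) and its $M$-invariance. Everything else is mechanical bookkeeping of the definitions of $h_{\bar b}$, the fiber functions, and the $\aut(\FC')$-action.
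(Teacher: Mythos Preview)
Your proof is correct and follows essentially the same route as the paper's, which simply records the one-line computation $(\mu\ast q)(\theta(\bar x;\bar b))=\mu|_{\FC'}(\theta(\bar x;\tau^{-1}(\bar b)))=(\tau(\mu|_{\FC'}))(\theta(\bar x;\bar b))$ and says the type case is analogous; you have just spelled out the intermediate steps (the fiber function, the auxiliary realization $\bar b'$, the $M$-invariance argument) explicitly. One minor remark: the tuple $\bar y$ need not be small (it can enumerate $\FC$), but your argument only uses that the finitely many coordinates of $\bar y$ actually occurring in $\theta$ can be realized in $\FC$, which is fine.
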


\begin{proof}
    For $\mu$, fix
    $\theta(\bar{x};\bar{b})\in \CL$ and $\bar{b}\in \FC^{\bar{y}}$. By Definition~\ref{def:star.definition} we obtain 
    \[(\mu\ast q)\big(\theta(\bar{x};\bar{b})\big) = \mu|_{\FC'} \Big( \theta\big(\bar{x};\tau^{-1}(\bar{b})\big)\Big)=\big( \tau(\mu|_{\FC'})\big)\big( \theta(\bar{x};\bar{b})\big)\]
    For $p$, we can argue analogously (see also \cite[Proposition 4.29]{GHK}).
\end{proof}

The following observation follows immediately.

\begin{cor}\label{rem: aut_is_*}
    For every Borel $M$-definable $\mu\in\mathfrak{M}^{\inv}_{\bar{x}}(\FC,M)$,
    every $p\in S^{\inv}_{\bar{n}}(\FC,M)$ and every $\sigma\in \Aut(\FC)$, we have
    \begin{align*}
    &&\sigma\cdot \mu&=\mu*\tp(\sigma(\bar n)/\FC),\textrm{ and}&\textrm{(Def.~\ref{def:star.definition})}\\
    &&\sigma\cdot p&=p*\tp(\sigma(\bar n)/\FC).&\textrm{(Def.~\ref{def:star.definition2})}
    \end{align*}
\end{cor}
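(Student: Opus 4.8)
The plan is to reduce the statement to the previous proposition. The point is that $\sigma\cdot\mu$ and $\sigma\cdot p$ are the results of applying the automorphism $\sigma$ to the measure $\mu$ and the type $p$, and I claim this is exactly the special case of Proposition~\ref{prop: alter_ast} where the automorphism $\tau\in\aut(\FC')$ is taken to be $\sigma\in\aut(\FC)$ itself.

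First I would fix $q:=\tp(\sigma(\bar n)/\FC)\in S_{\bar n}(\FC)$. Since $\sigma\in\aut(\FC)$ restricts to an automorphism of $\FC$, this type is actually realized inside $\FC$, but more to the point we may apply Proposition~\ref{prop: alter_ast} with $\tau=\sigma$ (viewed as an element of $\aut(\FC')$ via any extension, or simply noting $\sigma$ already lies in $\aut(\FC')\supseteq\aut(\FC)$ after fixing an embedding). That proposition gives $\mu\ast q=\sigma(\mu|_{\FC'})|_{\FC}$ and $p\ast q=\sigma(p|_{\FC'})|_{\FC}$. So the only thing left to check is the identification $\sigma\cdot\mu=\sigma(\mu|_{\FC'})|_{\FC}$ (and similarly for $p$), i.e.\ that pushing $\mu$ forward by $\sigma\in\aut(\FC)$ agrees with the two-step procedure: extend $M$-invariantly to $\FC'$, apply $\sigma$, then restrict back to $\FC$.

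For that identification I would argue as follows. For a formula $\theta(\bar x;\bar b)$ with $\bar b\in\FC^{\bar y}$, by definition $(\sigma\cdot\mu)(\theta(\bar x;\bar b))=\mu(\theta(\bar x;\sigma^{-1}(\bar b)))$, since the action of $\aut(\FC)$ on measures is by translating parameters. On the other hand, $\sigma^{-1}(\bar b)\in\FC^{\bar y}$, so $\mu|_{\FC'}(\theta(\bar x;\sigma^{-1}(\bar b)))=\mu(\theta(\bar x;\sigma^{-1}(\bar b)))$ because $\mu|_{\FC'}$ extends $\mu$; and then $(\sigma(\mu|_{\FC'})|_{\FC})(\theta(\bar x;\bar b))=\sigma(\mu|_{\FC'})(\theta(\bar x;\bar b))=\mu|_{\FC'}(\theta(\bar x;\sigma^{-1}(\bar b)))$. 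Chaining these equalities gives $\sigma\cdot\mu=\sigma(\mu|_{\FC'})|_{\FC}$ on every formula over $\FC$, hence as measures. The argument for $p$ is identical, reading $\in$ for the measure values. Combining with Proposition~\ref{prop: alter_ast} yields $\sigma\cdot\mu=\mu\ast\tp(\sigma(\bar n)/\FC)$ and $\sigma\cdot p=p\ast\tp(\sigma(\bar n)/\FC)$.

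The only mild subtlety — not really an obstacle — is bookkeeping around the monster models: one has to be slightly careful that $\sigma\in\aut(\FC)$ can legitimately play the role of the $\tau\in\aut(\FC')$ appearing in Proposition~\ref{prop: alter_ast}, which is fine since $\aut(\FC)$ acts on $\FC'$ (every automorphism of $\FC$ extends to $\FC'$, and the value of $\mu|_{\FC'}$ and $p|_{\FC'}$ on formulas over $\FC$ is determined by $M$-invariance, independently of the choice of extension). One should also note consistency with the earlier remark that $\delta_{p\ast q}=\delta_p\ast\delta_q$ when $p$ is Borel $M$-definable, so the two displayed lines are compatible. Given all the machinery already in place, the corollary is essentially an unwinding of definitions, and I would keep the write-up to just a few lines.
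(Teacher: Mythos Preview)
Your proposal is correct and follows exactly the paper's approach: the corollary is stated as following immediately from Proposition~\ref{prop: alter_ast}, and your argument is precisely the instantiation $\tau=\sigma$ (extended to $\FC'$) together with the observation that for $\bar b\in\FC$ one has $\tau^{-1}(\bar b)=\sigma^{-1}(\bar b)\in\FC$, so $\tau(\mu|_{\FC'})|_{\FC}=\sigma\cdot\mu$. Your write-up is more detailed than necessary, but the content matches.
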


\begin{lemma}\label{lemma:ast_semigroup_for_types}
We have that $(S^{\inv}_{\bar{n}}(\FC,M),\ast)$ forms a Hausdorff compact left-continuous semigroup.
In the case of $\bar{n}=\bar{m}$, the type $\tp(\bar{m}/\FC)$ is its neutral element.
\end{lemma}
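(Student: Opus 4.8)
The plan is to establish the four ingredients of the claim in turn, and then to identify the neutral element in the case $\bar{n}=\bar{m}$. First I would note that $S^{\inv}_{\bar{n}}(\FC,M)$ is a \emph{closed} subset of the Stone space $S_{\bar{x}}(\FC)$: the set $S_{\bar{n}}(\FC)=[\tp(\bar{n}/\emptyset)]$ is closed, and $M$-invariance of a type is the intersection, over all $\CL$-formulas $\theta(\bar{x};\bar{y})$ and all pairs $\bar{b}\equiv_M\bar{b}'$ from $\FC^{\bar{y}}$, of the clopen conditions ``$[\theta(\bar{x};\bar{b})]\leftrightarrow[\theta(\bar{x};\bar{b}')]$''. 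Hence $S^{\inv}_{\bar{n}}(\FC,M)$ inherits compactness and Hausdorffness from $S_{\bar{x}}(\FC)$ (and it is nonempty, since any global coheir of $\tp(\bar{n}/M)$ belongs to it). That $\ast$ maps a pair from $S^{\inv}_{\bar{n}}(\FC,M)$ back into $S^{\inv}_{\bar{n}}(\FC,M)$ was recorded right after Definition~\ref{def:star.definition2} (it is the type analogue of Lemma~\ref{lemma:ast_preserves_inv}), so $\ast$ is a genuine binary operation on this set.

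For left-continuity, I would fix $q\in S^{\inv}_{\bar{n}}(\FC,M)$, write $q=\tp(\tau(\bar{n})/\FC)$ with $\tau\in\aut(\FC')$, and invoke Proposition~\ref{prop: alter_ast}, which gives $p\ast q=\tau(p|_{\FC'})|_{\FC}$ for every $p\in S^{\inv}_{\bar{n}}(\FC,M)$, where $p|_{\FC'}$ denotes the unique $M$-invariant extension of $p$. For a basic clopen $[\theta(\bar{x};\bar{b})]$ with $\theta\in\CL$ and $\bar{b}\in\FC^{\bar{y}}$, the condition $\theta(\bar{x};\bar{b})\in p\ast q$ unwinds to $\theta(\bar{x};\tau^{-1}(\bar{b}))\in p|_{\FC'}$, which by $M$-invariance of $p|_{\FC'}$ depends only on $\tp(\tau^{-1}(\bar{b})/M)$; picking $\bar{b}_0\in\FC^{\bar{y}}$ with $\bar{b}_0\equiv_M\tau^{-1}(\bar{b})$ (possible since $\FC$ is a monster), it becomes $\theta(\bar{x};\bar{b}_0)\in p$. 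So the preimage of $[\theta(\bar{x};\bar{b})]$ under $p\mapsto p\ast q$ is $[\theta(\bar{x};\bar{b}_0)]\cap S^{\inv}_{\bar{n}}(\FC,M)$, a clopen set that -- by Proposition~\ref{prop: alter_ast} and by $M$-invariance -- does not depend on the auxiliary choices of $\tau$ and $\bar{b}_0$. Thus $p\mapsto p\ast q$ is continuous, i.e.\ $\ast$ is left-continuous.

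Associativity is the only step with real content, and I expect the bookkeeping involved to be the main obstacle. Here I would expand Definition~\ref{def:star.definition2} and reduce the identity $(p\ast q)\ast r=p\ast(q\ast r)$ to associativity of the Morley product of invariant types, together with the way the maps $h_{\bar{b}}\colon S_{\bar{n}}(\FC)\to S_{\bar{y}}(N)$ interact with $\ast$: realizing $p$, $q$, $r$ by automorphisms acting on $\bar{n}$ inside a tower $\FC\preceq\FC'\preceq\FC''$ of monster models and keeping careful track of the $M$-invariant re-extension that must be taken after each application of $\ast$, one sees that both triple products unwind to the same iterated Morley product. This is exactly the computation performed in \cite{GHK} for the base case $\bar{n}=\bar{m}$ (around its Definition~4.28 and Proposition~4.29); moving to a longer tuple $\bar{n}$ only replaces $S_{\bar{y}}(M)$ by $S_{\bar{y}}(N)$ in the maps $h_{\bar{b}}$ and introduces no new idea. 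The delicate part is purely the chase through the monster-model tower and the repeated re-extensions.

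Finally, suppose $\bar{n}=\bar{m}$. Then $\tp(\bar{m}/\FC)=\tp(\id_{\FC'}(\bar{m})/\FC)$ is $M$-definable, hence lies in $S^{\inv}_{\bar{m}}(\FC,M)$. For any $p\in S^{\inv}_{\bar{m}}(\FC,M)$, Proposition~\ref{prop: alter_ast} with $\tau=\id_{\FC'}$ gives $p\ast\tp(\bar{m}/\FC)=\id(p|_{\FC'})|_{\FC}=p$. For the other side, write $q=\tp(\tau(\bar{m})/\FC)$ with $\tau\in\aut(\FC')$; since $\bar{m}$ enumerates $M$, the type $\tp(\bar{m}/\FC)$ has $\tp(\bar{m}/\FC')$ as its unique global extension to $\FC'$, so $\tp(\bar{m}/\FC)|_{\FC'}=\tp(\bar{m}/\FC')$ and therefore $\tp(\bar{m}/\FC)\ast q=\tau(\tp(\bar{m}/\FC'))|_{\FC}=\tp(\tau(\bar{m})/\FC')|_{\FC}=\tp(\tau(\bar{m})/\FC)=q$. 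Thus $\tp(\bar{m}/\FC)$ is a two-sided identity, which completes the plan.
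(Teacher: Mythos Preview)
Your proposal is correct and follows essentially the same route as the paper: closedness of $S^{\inv}_{\bar n}(\FC,M)$ for compactness and Hausdorffness, the formula $p\ast q=\tau(p|_{\FC'})|_\FC$ for left-continuity (the paper phrases this via nets rather than preimages, but it is the same computation), and a deferral to \cite{GHK} for associativity (the paper cites Proposition~4.33 there; your pointer to 4.28--4.29 is slightly off but in the right neighbourhood). Your treatment of the neutral element is more detailed than the paper's, which simply declares it easy.

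One point needs tightening. In your left-continuity step you ``pick $\bar{b}_0\in\FC^{\bar{y}}$ with $\bar{b}_0\equiv_M\tau^{-1}(\bar{b})$ (possible since $\FC$ is a monster)''. But $\bar{y}$ has the length of $\bar{n}$, and in the generality of this lemma $N=\FC$ is explicitly allowed, so $\bar{y}$ may have length $|\FC|$ and saturation of $\FC$ does not directly produce such a $\bar{b}_0$. The paper handles this by first passing to the finite subtuple $\bar{y}_0\subseteq\bar{y}$ of variables actually occurring in $\theta$, setting $\bar{b}_0:=\bar{b}|_{\bar{y}_0}$, and then realising $\tp(\tau^{-1}(\bar{b}_0)/M)$ by some $\bar{d}_0$ in $\FC$. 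With that adjustment your argument goes through verbatim.
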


\begin{proof}
    Naturally $S^{\inv}_{\bar{n}}(\FC,M)$ is Hausdorff and compact as a closed subset of $S_{\bar{x}}(\FC)$.
    Left-continuity of $\ast$ follows by left-continuity of the Morley product. Alternatively, one can argue as follows. Let $q,p,p_i\in S^{\inv}_{\bar{n}}(\FC,M)$, where $i\in I$, and $q=\tp(\tau(\bar{n})/\FC)$ for some $\tau\in\aut(\FC')$. Assume that $(p_i)_{i\in I}$ converges to $p$ and assume that $p\ast q\in [\theta(\bar{x};\bar{b})]$ for some $\theta(\bar{x};\bar{y})\in\CL$ and $\bar{b}\in \FC^{\bar{y}}$.
    The last thing is equivalent to $\theta(\bar{x};\tau^{-1}(\bar{b}))\in p|_{\FC'}$.

    Let $\bar{y}_0$ be a finite subtuple of $\bar{y}$, such that all the variables from the tuple $\bar{y}$ appearing in $\theta$ occur already among $\bar{y}_0$. Let $\bar{b}_0$ be restriction of tuple $\bar{b}$ to variables $\bar{y}_0\subseteq\bar{y}$, i.e.\ $\bar{b}_0:=\bar{b}|_{\bar{y}_0}$. Abusing the notation, we can write $\theta(\bar{x};\bar{b}_0)$ in the place of $\theta(\bar{x};\bar{b})$. Now, let $\bar{d}_0\equiv_M\tau^{-1}(\bar{b}_0)$ be a tuple from $\FC$.
    Because $p$ is $M$-invariant, we have that $\theta(\bar{x};\bar{d}_0)\in p$.
    This means that there exists $i_0\in I$ such that for every $i\geqslant i_0$ we have
    $\theta(\bar{x};\bar{d}_0)\in p_i$.
    Hence $p_i\ast q\in [\theta(\bar{x};\bar{b})]$ for every $i\geqslant i_0$, and so $\lim_i p_i\ast q\in [\theta(\bar{x};\bar{b})]$.

    The associativity of $\ast$ follows by the same argument as in the proof of Proposition 4.33 from \cite{GHK}.
    Checking that $\tp(\bar{m}/\FC)$ is the neutral element is easy and so we skip it.
\end{proof}

\begin{remark}\label{rem: old star}
Let $\mu\in\mathfrak{M}^{\inv}_{\bar{x}}(\FC,M)$ be Borel $M$-definable
and let $\nu\in\mathfrak{M}_{\bar{n}}(\FC)$.
For every $\theta(\bar{x};\bar{y})\in\CL$ and $\bar{b}\in\FC^{\bar{y}}$, we have
\[(\mu\ast\nu)(\theta(\bar{x};\bar{b}))=\int_{S_{\bar{n}}(\FC)}(\mu\ast q)(\theta(\bar{x};\bar{b}))\,d\nu(q).\]
\end{remark}

\begin{proof}
    Simply note that
    $(\mu\ast q)(\theta(\bar{x};\bar{b}))= \big(F^{\theta(\bar{x};\bar{y})}_{\mu_{\bar{x}}}\circ\res_M\circ h_{\bar{b}}\big)(q)$ and so:
$$(\mu\ast\nu)(\theta(\bar{x};\bar{b}))=\int\limits_{q\in S_{\bar{n}}(\FC)}\big(F^{\theta(\bar{x};\bar{y})}_{\mu_{\bar{x}}}\circ\res_M\circ h_{\bar{b}}(q)\big)\,d\nu_{\bar{y}}
=\int\limits_{q\in S_{\bar{n}}(\FC)}(\mu\ast q)(\theta(\bar{x};\bar{b}))\,d\nu(q).$$
\end{proof}

In the following fact, the key assumption is that we consider a partial type in small tuple of variables (i.e.\ in $\bar{x}'$):

\begin{fact}[improved Lemma 6.5 from \cite{GHK}]\label{rem: better 6.3}
Consider a partial $\emptyset$-type $F(\bar{x}';\bar{y}')\vdash\bar{x}'\equiv_{\emptyset}\bar{y}'$
and let $\mu \in \mathfrak{M}^{\inv}_{\bar{n}}(\FC,M)$
be Borel $M$-definable.
The following are equivalent:
\begin{enumerate}
    \item $G_{F,\FC}\cdot \mu =\{\mu\}$,
    \item $\mu \ast \big(S_{F(\bar{x}';\bar{m})}(\FC)\big)=\{\mu\}$,
    \item $\mu\ast\{\tp(\sigma(\bar{n})/\FC)\;:\;\sigma\in G_{F,\FC}\}=\{\mu\}$.
\end{enumerate}
\end{fact}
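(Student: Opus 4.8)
The plan is to prove the cycle of implications (1) $\Rightarrow$ (2) $\Rightarrow$ (3) $\Rightarrow$ (1), using the alternative description of the $\ast$-product from Proposition~\ref{prop: alter_ast} together with Corollary~\ref{rem: aut_is_*} to translate between the action of automorphisms and $\ast$-multiplication by types of the form $\tp(\sigma(\bar n)/\FC)$.

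First I would record the key computational fact. For $\sigma\in\aut(\FC)$, write $q_\sigma:=\tp(\sigma(\bar n)/\FC)$; then by Corollary~\ref{rem: aut_is_*} we have $\mu\ast q_\sigma=\sigma\cdot\mu$, and by Proposition~\ref{prop: alter_ast}, for an arbitrary $q=\tp(\tau(\bar n)/\FC)$ with $\tau\in\aut(\FC')$ we have $\mu\ast q=\tau(\mu|_{\FC'})|_{\FC}$. Next I would identify the set appearing in (2): a type $q\in S_{\bar n}(\FC)$ lies in $S_{F(\bar x';\bar m)}(\FC)$ exactly when $q$ contains $F(\bar x';\bar m)$, i.e.\ when $q=\tp(\tau(\bar n)/\FC)$ for some $\tau\in\aut(\FC')$ with $\models F(\tau(\bar m);\bar m)$ — equivalently $\tau\in G_{F,\FC'}$ after restricting, which by a standard argument (since $F$ is a small-tuple partial type and $\FC$ is a monster model) is the same as saying $q$ is realized by $\sigma(\bar n)$ for some $\sigma\in G_{F,\FC}$ together with its invariant extensions. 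This is the point where I would be careful: the set in (3) consists of types of the form $q_\sigma$ with $\sigma\in G_{F,\FC}$ an actual automorphism of $\FC$, whereas (2) ranges over \emph{all} of $S_{F(\bar x';\bar m)}(\FC)$, including non-invariant types; the content of (2)$\Leftrightarrow$(3) is that these two prescriptions give the same orbit of $\mu$, which I expect to follow from density of the $G_{F,\FC}$-orbit types inside $S_{F(\bar x';\bar m)}(\FC)$ combined with left-continuity of $\ast$ (Lemma~\ref{lemma:ast_semigroup_for_types}) — this density is essentially the content of the "improved" part of Lemma 6.5 from \cite{GHK}.

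For the implications themselves: (3)$\Rightarrow$(1) is immediate once we observe, via Corollary~\ref{rem: aut_is_*}, that $G_{F,\FC}\cdot\mu=\{\mu\ast q_\sigma : \sigma\in G_{F,\FC}\}$, which is exactly the set in (3). For (1)$\Rightarrow$(2), suppose $\sigma\cdot\mu=\mu$ for all $\sigma\in G_{F,\FC}$. Given $q\in S_{F(\bar x';\bar m)}(\FC)$, I would use Remark~\ref{rem: old star} to write $\mu\ast q$ as an integral/limit of measures $\mu\ast q'$ where $q'$ ranges over the $G_{F,\FC}$-orbit types which are dense in $S_{F(\bar x';\bar m)}(\FC)$; each such $\mu\ast q_{\sigma}=\sigma\cdot\mu=\mu$ by hypothesis, and then left-continuity of $\ast$ forces $\mu\ast q=\mu$. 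The implication (2)$\Rightarrow$(3) is trivial since the set in (3) is contained in $S_{F(\bar x';\bar m)}(\FC)$ (each $q_\sigma$ with $\sigma\in G_{F,\FC}$ satisfies $F(\bar x';\bar m)$).

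The main obstacle I anticipate is the bookkeeping around the two monster models $\FC\prec\FC'$ and making the density statement precise: I need that every $q\in S_{F(\bar x';\bar m)}(\FC)$ is a limit of types $\tp(\sigma(\bar n)/\FC)$ with $\sigma\in G_{F,\FC}$ — i.e.\ that the group $G_{F,\FC}$, which consists of automorphisms of $\FC$ fixing enough, acts with dense orbit on the relevant type space — and then invoking left-continuity in the first coordinate of $\ast$ correctly (note $\ast$ is only left-continuous, so it is essential that $\mu$ sits on the left). Everything else is a direct unwinding of Definitions~\ref{def:star.definition} and~\ref{def:star.definition2} via Proposition~\ref{prop: alter_ast} and Remark~\ref{rem: old star}; since the statement is billed as an "improved" version of \cite[Lemma 6.5]{GHK}, I would expect the argument there to carry over with only the cosmetic change of allowing the longer tuple $\bar n$ in place of $\bar m$, the longer-tuple case being reduced to the original via the restriction maps already set up in the preliminaries.
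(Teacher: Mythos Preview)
Your treatment of (2)$\Rightarrow$(3) and (3)$\Rightarrow$(1) is correct and matches the paper's. The gap is in (1)$\Rightarrow$(2): you propose to use density of the types $\tp(\sigma(\bar n)/\FC)$, $\sigma\in G_{F,\FC}$, inside $S_{F(\bar x';\bar m)}(\FC)$ together with ``left-continuity of $\ast$''. But in this paper (see the proof of Lemma~\ref{lemma:ast_semigroup_for_types}) \emph{left-continuous} means that $p\mapsto p\ast q$ is continuous for fixed $q$; what you need is continuity of $q\mapsto \mu\ast q$, i.e.\ continuity in the \emph{right} variable. With $\mu$ only Borel $M$-definable the fiber function $F^{\theta}_\mu$ is merely Borel, so $q\mapsto(\mu\ast q)(\theta(\bar x;\bar b))=F^{\theta}_\mu\circ\res_M\circ h_{\bar b}(q)$ is not in general continuous, and your density argument does not go through. (Your appeal to Remark~\ref{rem: old star} also does nothing here: for $\nu=\delta_q$ that formula just returns $(\mu\ast q)(\theta)$.)

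The route the paper takes --- and which you actually identified in your second paragraph before abandoning it --- avoids continuity entirely. Given any $q\in S_{F(\bar x';\bar m)}(\FC)$, write $q=\tp(\tau(\bar n)/\FC)$ with $\tau\in\aut(\FC')$; then $\models F(\tau(\bar m);\bar m)$ means $\tau\in G_{F,\FC'}$. The point is that (1) lifts to $\FC'$: since $F$ is an $\emptyset$-type in the small variables $\bar x',\bar y'$, the hypothesis $G_{F,\FC}\cdot\mu=\{\mu\}$ implies $G_{F,\FC'}\cdot(\mu|_{\FC'})=\{\mu|_{\FC'}\}$ (cf.\ Remark~\ref{rem: stab and superfim} and the argument of Proposition~\ref{prop: left-right stab}). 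Now Proposition~\ref{prop: alter_ast} gives $\mu\ast q=\tau(\mu|_{\FC'})|_{\FC}=(\mu|_{\FC'})|_{\FC}=\mu$ directly, with no approximation needed. This is exactly the ``argument as in the proof of Lemma 6.5 in \cite{GHK}'' that the paper cites.
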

\begin{proof}
    For (1)$\Rightarrow$(2), we follow the argument as in the proof of Lemma 6.5 in \cite{GHK}.
    Then, (2)$\Rightarrow$(3) is obvious. To obtain (3)$\Rightarrow$(1), we use Corollary~\ref{rem: aut_is_*}.
\end{proof}

The following family of measures was introduced in 
\cite{HruPiSi13} and generalizes dfs measures (definable and finitely satisfiable). Moreover, for types the following property coincides with generic stability \cite[Proposition 3.2]{ConGann20}.

\begin{definition}\label{def:fim}
    Let $\mu \in \mathfrak{M}_{\bar{x}}^{\inv}(\FC,M)$ be Borel-definable. 
We say that $\mu$ is a \emph{fim measure} over $M$ (a \emph{frequency interpretation measure} over $M$) if for any finite $\bar{x}' \subseteq \bar{x}$ and $\CL$-formula $\varphi(\bar{x}';\bar{y})$ there exists a sequence of $\mathcal{L}(M)$-formulas $(\theta_n(\bar{x}_1,\ldots, \bar{x}_n))_{1\leqslant n<\omega}$ such that $|\bar{x}_i| = |\bar{x}'|$ and:
    \begin{enumerate}
        \item for any $\epsilon>0$, there exists some integer $n_\epsilon$ such that 
        for every $n \geqslant n_\epsilon$ and every $\bar{a} = (\bar{a}_1,\dots,\bar{a}_n) \in \FC^{(\bar{x}_1,\ldots,\bar{x}_n)}$
        with $\models\theta_n(\bar{a})$ we have
        \[\sup_{b\in \FC^{\bar{y}}}|\Av(\bar{a})(\varphi(\bar{x}';\bar{b}))-\mu(\varphi(\bar{x}';\bar{b}))|<\epsilon,\]
        \item $\lim_{n\to\infty}\mu^{(n)}(\theta_n(\bar{x}_1,\ldots,\bar{x}_n))=1$.
    \end{enumerate}
We say that a fim measure $\mu \in \mathfrak{M}_{\bar{x}}^{\inv}(\FC,M)$ is \emph{super-fim} over $M$ if $\mu^{(n)}$ is fim for every $n \geq 1$. 
\end{definition}

Note that super-fim coincides with fim for Keisler measures in NIP theories (\cite{HruPiSi13}) and for types in NTP2 theories (\cite{ConGanHan23}).

\begin{remark}\label{rem: superfim}
    If $\mu\in\mathfrak{M}_{\bar{n}}(\FC)$ is fim over $M$ [superfim over $M$]
    then $\mu|_{\FC'}$ is fim over $M$ [superfim over $M$].
\end{remark}
\begin{proof}
    This is clear by the definition: $\CL(M)$-formulas witnessing fim over $M$ for $\mu^{(n)}$,
    witness fim over for the extension $\mu^{(n)}|_{\FC'}$,
    where $n<\omega$.
\end{proof}

\begin{proposition}\label{rem: inverse of star-product}
    Let $\mu,\nu\in\mathfrak{M}^{\inv}_{\bar{c}}(\FC,M)$ be such that
    $\mu$ is fim over $M$ and $\nu^{-1}$ is Borel $M$-definable.
    Then
    \[\mu\ast\nu=(\nu^{-1}\ast\mu^{-1})^{-1}.\]
\end{proposition}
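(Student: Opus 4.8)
The plan is to compute both sides by unwinding the definitions of $\ast$ and of the inverse, and to reduce everything to a single statement about the Morley product of the $\FC'$-extensions evaluated against an appropriate automorphism. The key technical tool will be Proposition~\ref{prop: alter_ast}, which already tells us how to compute $\mu\ast q$ for a type $q=\tp(\tau(\bar c)/\FC)$, namely as $\tau(\mu|_{\FC'})|_{\FC}$, together with Remark~\ref{rem: old star}, which expresses $\mu\ast\nu$ as the integral $\int_{S_{\bar c}(\FC)}(\mu\ast q)\,d\nu(q)$.

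First I would spell out what $(\nu^{-1}\ast\mu^{-1})^{-1}$ means: applying Definition~\ref{def:inverse} twice and Remark~\ref{rem: old star} once, for a formula $\theta(\bar x;\bar b)$ we get an integral over $q\in S_{\bar c}(\FC)$ of $(\nu^{-1}\ast q)$ evaluated on the appropriate twisted formula, integrated against $\mu^{-1}$. Then I would use the fim hypothesis on $\mu$ (hence on $\mu|_{\FC'}$, by Remark~\ref{rem: superfim}) to invoke the symmetry/commutativity properties of the Morley product with a fim measure on one side: a fim measure commutes (in the Morley sense, and after applying group multiplication, in the $\ast$ sense) with arbitrary invariant measures, and moreover Morley products involving it are "Fubini-symmetric". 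The identity $\mu\ast\nu=(\nu^{-1}\ast\mu^{-1})^{-1}$ is then the measure-algebra shadow of the elementary group-theoretic fact $(ab)^{-1}=b^{-1}a^{-1}$, transported through the $h_{\bar c}$ machinery; the role of fim is precisely to license the interchange of the order of integration in the two iterated integrals that arise on the two sides.

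Concretely, I would proceed as follows. (1) Fix $\theta(\bar x;\bar b)$ and reduce, via Remark~\ref{rem: old star} applied to both products, to comparing $\int\!\int$ of $(\delta_p\ast\delta_q)$-type quantities, where $p$ ranges over the support of $\mu$ (resp.\ $\mu^{-1}$) and $q$ over the support of $\nu$ (resp.\ $\nu^{-1}$). (2) Using Proposition~\ref{prop: alter_ast}, rewrite each inner expression as $\sigma\tau(\mu|_{\FC'})|_{\FC}$ or the analogous expression for the reversed product, where $p=\tp(\sigma(\bar c)/\FC)$, $q=\tp(\tau(\bar c)/\FC)$; here one uses that $h_{\bar c}$ sends $\tp(\sigma(\bar c)/\FC)$ to $\tp(\sigma^{-1}(\bar c)/M)$, so that the inverse operation literally replaces $\sigma$ by $\sigma^{-1}$ at the level of the representing automorphisms. (3) Observe that both double integrals are integrals of $\theta\big(\bar x;(\sigma\tau)^{-1}(\bar b)\big)$ against $(\mu|_{\FC'})$, with $(\sigma,\tau)$ distributed according to the relevant pushforwards of $\mu\otimes\nu$ (resp.\ $\nu^{-1}\otimes\mu^{-1}$); the map $(\sigma,\tau)\mapsto(\tau^{-1},\sigma^{-1})$ intertwines these two distributions, exactly because inversion swaps the two coordinates and sends $\mu\otimes\nu$-type data to $\nu^{-1}\otimes\mu^{-1}$-type data. (4) Conclude that the two iterated integrals agree, using the Fubini property of $\mu|_{\FC'}$ (being fim) to justify that the order in which we integrate the $\sigma$ and $\tau$ parameters does not matter.

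The main obstacle I anticipate is step (4): bookkeeping the Fubini/commutativity argument carefully. One has to be precise about which measure is Borel-definable (so that the Morley product and the fiber-function integral make sense) — this is why the hypothesis asks for $\nu^{-1}$ to be Borel $M$-definable and $\mu$ to be fim (fim $\Rightarrow$ Borel-definable, and fim measures are the ones for which Morley products are symmetric). The verification that $(h_{\bar c})_\ast$ behaves correctly under composition, i.e.\ that $\big((\nu^{-1}\ast\mu^{-1})^{-1}\big)$ unwinds to exactly the reversed-order double integral, is essentially a diagram chase with the maps $h_{\bar b}$, $\res_M$ and inversion of automorphisms; it is routine but needs care because the variables $\bar x$, $\bar y$ and the tuples $\bar c$, $\bar b$ are being shuffled. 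Once those two points are handled, equality of the two sides on every $\theta(\bar x;\bar b)$ follows, and since Keisler measures are determined by their values on formulas, we get $\mu\ast\nu=(\nu^{-1}\ast\mu^{-1})^{-1}$.
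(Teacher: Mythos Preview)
Your proposal has the right core ingredient --- the commutativity of the Morley product when one factor is fim --- but you are taking a substantial detour, and step~(3) is not rigorous as written. You speak of ``$(\sigma,\tau)$ distributed according to the relevant pushforwards of $\mu\otimes\nu$'', but $\mu$ and $\nu$ are measures on $S_{\bar c}(\FC)$, not on $\Aut(\FC')$; the map $\sigma\mapsto\tp(\sigma(\bar c)/\FC)$ is not injective, so there is no well-defined ``distribution of $\sigma$'' to integrate over. Making step~(3) precise forces you back to fiber functions and Morley products on type spaces, at which point the passage through Remark~\ref{rem: old star} and Proposition~\ref{prop: alter_ast} becomes redundant.

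The paper's argument is much shorter and avoids all of this. Observe directly from Definition~\ref{def:star.definition} that, with $\bar b=\bar c$, one has $(h_{\bar c})_\ast\nu=\nu^{-1}$ by Definition~\ref{def:inverse}, so
\[
(\mu\ast\nu)(\theta(\bar x;\bar c))=\bigl(\mu_{\bar x}\otimes\nu^{-1}_{\bar y}\bigr)(\theta(\bar x;\bar y)).
\]
Now apply commutativity of the Morley product with a fim factor (\cite[Theorem~5.16]{InTheWild}) once to get $(\nu^{-1}_{\bar y}\otimes\mu_{\bar x})(\theta)$, rewrite $\mu_{\bar x}=((\mu^{-1})^{-1})_{\bar x}$ (inversion is an involution since $h_{\bar c}\circ h'_{\bar c}=\id$), and recognise the result as $(\nu^{-1}\ast\mu^{-1})(\theta(\bar c;\bar y))=(\nu^{-1}\ast\mu^{-1})^{-1}(\theta(\bar x;\bar c))$ by the same definition with the roles of $\bar x$ and $\bar y$ swapped. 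No iterated integrals, no automorphism bookkeeping, no Fubini --- just one application of Morley commutativity sandwiched between unwinding the definitions.
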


\begin{proof}
    First, we note that $\mu_{\bar{x}}=  \big( (\mu_{\bar{x}})^{-1}_{\bar{y}}\big)^{-1}_{\bar{x}}$.
    We already defined $(\mu_{\bar{x}})^{-1}_{\bar{y}}$ as pushforward $(h_{\bar{c}})_{\ast}\mu$ along
    $h_{\bar{c}}\colon S_{\bar{c}}(\FC)\to S_{\bar{y}}(\FC)$ (taking types in variable $\bar{x}$ to types in variable $\bar{y}$).
    Then, $\big( (\mu_{\bar{x}})^{-1}_{\bar{y}}\big)^{-1}_{\bar{x}}$ is defined in the same way after exchanging roles of $\bar{x}$ and $\bar{y}$, i.e.\ 
    $\big( (\mu_{\bar{x}})^{-1}_{\bar{y}}\big)^{-1}_{\bar{x}}$ is pushforward $(h_{\bar{c}}')_{\ast} (\mu^{-1})$ along $h_{\bar{c}}'\colon S_{\bar{c}}(\FC)\to S_{\bar{x}}(\FC)$ (taking types in variable $\bar{y}$ to types in variable $\bar{x}$), given by $h'_{\bar{c}}\big(\tp(\sigma(\bar{c}/\FC))\big)=\tp\big(\sigma^{-1}(\bar{c})/\FC\big)$, where $\sigma\in\aut(\FC')$. Of course, $h_{\bar{c}}\circ h'_{\bar{c}}=\id_{S_{\bar{c}}(\FC)}$ and so $\mu_{\bar{x}}=  \big( (\mu_{\bar{x}})^{-1}_{\bar{y}}\big)^{-1}_{\bar{x}}$.

    Let $\theta(\bar{x};\bar{y})\in\CL$. We use Theorem 5.16 from \cite{InTheWild} (i.e.\ commutativity of fim measures in the Morley product):
    %\todo{Skąd wiadomo, że ${(\mu^{-1})}^{-1}=\mu$? To nie jest trudne, ale trzeba tu coś więćej napisać, też o tych rachunkach...}:
    \begin{IEEEeqnarray*}{rCl}
        \big(\mu_{\bar{x}}\ast\nu_{\bar{x}}\big)\big(\theta(\bar{x};\bar{c})\big) &=& \Big( \mu_{\bar{x}}\otimes \big( (h_{\bar{c}})_\ast(\nu_{\bar{x}})\big)_{\bar{y}}\Big)\big(\theta(\bar{x};\bar{y})\big) \\
        &=& (\mu_{\bar{x}}\otimes \nu^{-1}_{\bar{y}})\big(\theta(\bar{x};\bar{y})\big) \\
        &=& (\nu^{-1}_{\bar{y}}\otimes \mu_{\bar{x}})\big(\theta(\bar{x};\bar{y})\big) \\
        &=& \Big(\nu^{-1}_{\bar{y}}\otimes \big((\mu_{\bar{x}})^{-1}_{\bar{y}}\big)^{-1}_{\bar{x}} \Big)\big(\theta(\bar{x};\bar{y})\big) \\
        &=& \big(\nu^{-1}_{\bar{y}}\ast \mu^{-1}_{\bar{y}}\big)\big(\theta(\bar{c};\bar{y})\big) \\
        &=& \big(\nu^{-1}_{\bar{y}}\ast \mu^{-1}_{\bar{y}}\big)^{-1}\big(\theta(\bar{x};\bar{c})\big) 
    \end{IEEEeqnarray*}
\end{proof}

\section{Idempotent measure conjecture}\label{sec: ideals and stabilizers}
Recall that a relatively $\bar{m}$-type-definable over $M$
subgroup $G$ of $\aut(\FC)$, i.e.\ 
$G=\{\sigma\in\aut(\FC) : \;  \models\pi(\bar{x}';\bar{m})\}$
for some partial type $\pi(\bar{x}';\bar{y}')$ over $\emptyset$,
 is (left) fim (over $M$) if there exists a (left) $G$-invariant fim measure in $\mathfrak{M}^{\inv}_{\pi(\bar{x};\bar{m})}(\FC,M)$.
One of the main goals of \cite{GHK} (and also of \cite{CGK}) is to study the following conjecture (where $\bar{n}=\bar{m}$ and $\bar{x}'=\bar{x}$):

\begin{conj}[Idempotent Measure Conjecture]\label{conjecture: main conjecture}
Let $\mu \in \mathfrak{M}^{\df}_{\bar m}(\FC,M)$ be fim over $M$.
We know from \cite[Lemma 2.26]{GHK} that $\stab_l(\mu)=G_{\pi,\FC}$ for some partial type $\pi(\bar x,\bar y)\vdash\bar{x}\equiv_{\emptyset}\bar{y}$.
Then the following are equivalent:
\begin{enumerate}
\item $\mu$ is an idempotent.

\item $\mu$ is the unique (left) $G_{\pi,\FC}$-invariant measure in $\mathfrak{M}^{\inv}_{\pi(\bar{x};\bar{m})}(\FC,M)$.
\end{enumerate}
In particular, there is a correspondence between idempotent fim measures in $\mathfrak{M}^{\inv}_{\bar m}(\FC,M)$ and relatively $\bar m$-type-definable over $M$ fim subgroups of $\aut(\FC)$.
\end{conj}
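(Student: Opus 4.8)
The plan is to prove the two implications separately, in each case passing to the support $\supp(\mu)$, which by Lemma~\ref{lemma:ast_semigroup_for_types} and Lemma~\ref{lemma:ast_preserves_inv} is a closed subsemigroup of $(S^{\inv}_{\bar{m}}(\FC,M),\ast)$ and (by \cite[Lemma 2.26]{GHK}) consists of types concentrated on $\pi(\bar{x};\bar{m})$.

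For the softer direction (2)$\Rightarrow$(1): from (2), via Fact~\ref{rem: better 6.3} and \cite[Lemma 2.26]{GHK}, one gets $G_{\pi,\FC}\cdot\mu=\{\mu\}$. Then $\mu\ast\mu$ is again $M$-invariant (Lemma~\ref{lemma:ast_preserves_inv}), concentrated on $S_{\pi(\bar{x};\bar{m})}(\FC)$ (a routine check using left $G_{\pi,\FC}$-invariance of $\mu$ and Proposition~\ref{prop: alter_ast}, since $G_{\pi,\FC}$ is a group stabilizing the locus of $\pi$), and left $G_{\pi,\FC}$-invariant, because $G_{\pi,\FC}\cdot(\mu\ast\mu)=(G_{\pi,\FC}\cdot\mu)\ast\mu=\mu\ast\mu$ by Corollary~\ref{rem: aut_is_*}. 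So $\mu\ast\mu$ is a left $G_{\pi,\FC}$-invariant element of $\mathfrak{M}^{\inv}_{\pi(\bar{x};\bar{m})}(\FC,M)$, whence $\mu\ast\mu=\mu$ by the uniqueness in (2); that is, $\mu$ is idempotent.

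The hard direction is (1)$\Rightarrow$(2), and here I would follow the thread distilled from \cite{Pym62}, \cite{CGK} and the stable case of \cite{GHK}, in three stages. \emph{Stage 1:} show that for an idempotent fim $\mu$ the support $\supp(\mu)$ is \emph{left-simple}, i.e.\ a single minimal left ideal of the ambient semigroup — equivalently "support transitive" in the sense of \cite[Proposition 3.51]{CGK}. \emph{Stage 2:} establish property \eqref{eq:restricted_lascar_inv} for $\mu$ — the small-model invariance condition abstracted from the stable-case arguments, which holds in particular when $\mu$ is Lascar-invariant (Remark~\ref{rem: diamond and Lascar}). \emph{Stage 3:} feed Stages 1 and 2 into Lemma~\ref{lemma: diamond}, whose conclusion is exactly that, under left-simplicity of $\supp(\mu)$, property \eqref{eq:restricted_lascar_inv} and idempotency, $\mu$ is the unique left $G_{\pi,\FC}$-invariant measure in $\mathfrak{M}^{\inv}_{\pi(\bar{x};\bar{m})}(\FC,M)$, i.e.\ (2). (Strengthening fim to super-fim is costless in NIP by \cite{HruPiSi13}; this is how Theorem~\ref{thm: weak conjecture} is obtained, and every presently known confirmation — stable theories, the KP-invariant NIP case, Dirac measures in rosy theories, and their definable-group counterparts in \cite{CGK} — is a case where Stages 1 and 2 can be carried out directly.) The correspondence in the last sentence then follows formally once (1)$\Leftrightarrow$(2) is known: $\mu\mapsto\stab_l(\mu)=G_{\pi,\FC}$ and $G\mapsto$ the unique left $G$-invariant fim measure on $S_{\pi(\bar{x};\bar{m})}(\FC)$ are mutually inverse, the recovery of $G$ being \cite[Lemma 2.26]{GHK}.

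The main obstacle is therefore not Lemma~\ref{lemma: diamond} but Stages 1 and 2: whether left-simplicity of $\supp(\mu)$ (Question~\ref{question: left-simple}) and property \eqref{eq:restricted_lascar_inv} (Questions~\ref{question: diamond} and~\ref{question: diamond2}) are automatic for every idempotent fim measure. Neither hypothesis costs anything toward the conjecture itself: (2) can hold only if $\supp(\mu)$ is left-simple (Corollary~\ref{cor:supp_cont_pi_ls}), and (2) also forces \eqref{eq:restricted_lascar_inv} (one half of Lemma~\ref{lemma: diamond}). So the full conjecture reduces precisely to these two automaticity questions, and Theorem~\ref{thm: weak conjecture} is what one can presently assert unconditionally.
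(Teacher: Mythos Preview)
The statement is a \emph{conjecture}, and the paper does not prove it---it explicitly leaves $(1)\Rightarrow(2)$ open and proves only the weakened Theorem~\ref{thm: weak conjecture}. Your proposal is not a proof either, and you say so yourself in the last two paragraphs: Stages~1 and~2 are precisely Questions~\ref{question: left-simple} and~\ref{question: diamond}/\ref{question: diamond2}, which remain open. So in substance your write-up agrees with the paper's analysis: $(2)\Rightarrow(1)$ is soft, $(1)\Rightarrow(2)$ reduces via Lemma~\ref{lemma: diamond} to left-simplicity of $\supp(\mu)$ plus \eqref{eq:restricted_lascar_inv}, and that reduction is the content of Theorem~\ref{thm: weak conjecture}.

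Two concrete corrections. First, your opening sentence is wrong: \cite[Lemma~2.26]{GHK} only identifies $\stab_l(\mu)$ with $G_{\pi,\FC}$; it does \emph{not} say $\supp(\mu)\subseteq[\pi(\bar{x};\bar{m})]$. That containment is exactly the hard direction (see the paper's remark that ``the main task\ldots is to show that for idempotent fim measures we have $\supp(\mu)\subseteq[\pi(\bar{x};\bar{m})]$''), so you cannot invoke it at the outset. Second, in your $(2)\Rightarrow(1)$ argument the identity $\sigma\cdot(\mu\ast\mu)=(\sigma\cdot\mu)\ast\mu$ is on the wrong side: Corollary~\ref{rem: aut_is_*} says $\sigma\cdot\lambda=\lambda\ast\tp(\sigma(\bar m)/\FC)$, i.e.\ the $\Aut(\FC)$-action is \emph{right} $\ast$-multiplication, so associativity gives $\sigma\cdot(\mu\ast\mu)=\mu\ast(\sigma\cdot\mu)$. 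Your conclusion survives because $\sigma\cdot\mu=\mu$ anyway. The paper does not run this argument at all; it simply cites \cite[Corollary~6.7]{GHK} (equivalently \cite[Lemma~6.5 and Proposition~6.6]{GHK}), which is more direct: once $\mu$ is $G_{\pi,\FC}$-invariant and concentrated on $[\pi(\bar x;\bar m)]$, Fact~\ref{rem: better 6.3} gives $\mu\ast q=\mu$ for every $q\in\supp(\mu)$, and then $\mu\ast\mu=\mu$ follows from Remark~\ref{rem: old star} by integrating.
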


The conjecture holds in all stable theories \cite[Corollary 8.25]{GHK} and for types (i.e.\ Dirac measures) in all rosy (in particular in all simple and in all o-minimal) theories \cite[Corollary 6.21]{GHK}.
We already know that the implication (2)$\Rightarrow$(1) follows easily even without the fim assumption \cite[Corollary 6.7]{GHK}.
The difficult part is the implication (1)$\Rightarrow$(2).
In (2) uniqueness follows automatically from the assumption that $\mu$ is fim over $M$
(in an arbitrary theory for fim types and for superfim measures, in particular,
in NIP for measures, see \cite[Propostion 6.10]{GHK} and \cite[Corollary 6.13]{GHK}.
Therefore the main task in showing the above conjecture is to show that for idempotent fim measures we have $\supp(\mu)\subseteq[\pi(\bar{x};\bar{m})]$.
We will investigate this here
by looking at minimal left ideals.

Recall that
\[\ast\colon S^{\inv}_{\bar{n}}(\FC,M)\times S^{\inv}_{\bar{n}}(\FC,M)\to S^{\inv}_{\bar{n}}(\FC,M)\]
defines a Hausdorff compact left-continuous semigroup.
We can apply classical results, e.g.\ Fact A.8 from \cite{rzepecki2018},
to note that there exist minimal left ideals in $(S^{\inv}_{\bar{n}}(\FC,M),\ast)$, each of them is closed and contains an idempotent.

In the case of amenable NIP theories, the entire semigroup $(S^{\inv}_{\bar{n}}(\FC,M),\ast)$ contains a unique minimal left ideal, namely the ideal of types which do not fork over $\emptyset$ (we will understand more of the structure of this ideal later on, cf.\ Proposition~\ref{prop: support of f-generics} and Theorem~\ref{thm: fGen and injectivity on Ellis groups}).
For the general case (i.e.\ perhaps a non-amenable, perhaps non-NIP theory), we look for unique minimal left ideals not in the entire semigroup $(S^{\inv}_{\bar{n}}(\FC,M),\ast)$, but in sub-semigroups given by supports of idempotent Keisler measures.

\subsection{Supports and stabilizers}
Let us fix $\mu\in\mathfrak{M}^{\df}_{\bar{n}}(\FC,M)$
with $\supp(\mu)\subseteq S^{\inv}_{\bar{n}}(\FC,M)$ (e.g.\ $\mu$ is fim over $M$). We set:
\begin{IEEEeqnarray*}{rCl}
S &:=& \supp(\mu), \\
\stab_l(\mu) &:=&
\{\sigma\in\aut(\FC)\;:\;\sigma\cdot\mu=\mu\},\\
\stab_r(\mu) &:=&
\{q\in S_{\bar{n}}(\FC)\;:\;\mu\ast q=\mu\}.
\end{IEEEeqnarray*}

\begin{remark}
    $\stab_r(\mu)$ is closed subset of $S_{\bar{n}}(\FC)$.
\end{remark}

\begin{proof}
    Because $\mu$ is $M$-definable,
    we note that $(\mu\ast-)\colon S_{\bar{n}}(\FC)\to\mathfrak{M}_{\bar{n}}(\FC)$ is continuous. Indeed, if $q\in S_{\bar{n}}(\FC)$, $\theta(\bar{x};\bar{y})\in\CL$,
    $\bar{b}\in\FC^{\bar{y}}$
    and $r,s\in\Rr$ then
    \begin{IEEEeqnarray*}{rCl}
    (\mu\ast q)\big(\theta(\bar{x};\bar{b})\big)\in(r,s) &\iff &
    \big( F^{\theta(\bar{x};\bar{y})}_{\mu}\circ h_{\bar{b}}\big)(q)\in (r,s),
    \end{IEEEeqnarray*}
    and both $F^{\theta(\bar{x};\bar{y})}_\mu$ and $h_{\bar{b}}$ are continuous. It follows immediately that $\stab_r(\mu)$ is closed.
\end{proof}

\begin{proposition}\label{rem: idempotent support 1}
If additionally $\mu\ast\mu=\mu$, then
$(S,\ast)$ is a Hausdorff compact left-continuous semigroup with no proper closed two-sided ideals.
\end{proposition}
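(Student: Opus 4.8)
The plan is to first show that $S=\supp(\mu)$ is a closed subsemigroup of $(S^{\inv}_{\bar n}(\FC,M),\ast)$; it is then automatically a Hausdorff compact left-continuous semigroup, since a closed subsemigroup inherits all three properties (Hausdorffness and compactness from the subspace topology, left-continuity because each $p\mapsto p\ast q$ is continuous on $S^{\inv}_{\bar n}(\FC,M)$). For the closure under $\ast$, the key observation is that, by Proposition~\ref{prop: alter_ast} and the fact that $p\mapsto p|_{\FC'}$ is a homeomorphism of $S^{\inv}_{\bar n}(\FC,M)$ onto $S^{\inv}_{\bar n}(\FC',M)$, the measure $\mu\ast q$ is the pushforward $(\rho_q)_{\ast}\mu$ of $\mu$ along the continuous map $\rho_q\colon p\mapsto p\ast q=\tau(p|_{\FC'})|_{\FC}$ (where $q=\tp(\tau(\bar n)/\FC)$); hence $\supp(\mu\ast q)=\rho_q(S)=S\ast q$ for every $q\in S_{\bar n}(\FC)$. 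Feeding this into the identity $(\mu\ast\mu)(\theta)=\int_{S}(\mu\ast q)(\theta)\,d\mu(q)$ from Remark~\ref{rem: old star} and using that $q\mapsto\mu\ast q$ is weak$^{*}$-continuous ($\mu$ being definable), the usual computation of the support of a barycentric integral gives $\supp(\mu\ast\mu)=\overline{\bigcup_{q\in S}S\ast q}=\overline{S\ast S}$ together with $S\ast q\subseteq\supp(\mu\ast\mu)$ for each $q\in S$. Since $\mu\ast\mu=\mu$, this yields $S\ast S\subseteq S$ and in fact $S=\overline{S\ast S}$.

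Next I would establish the dichotomy $\mu(I)\in\{0,1\}$ for every nonempty closed two-sided ideal $I$ of $S$. Combining $\mu\ast q=(\rho_q)_{\ast}\mu$ with Remark~\ref{rem: old star} (extended from clopen sets to the closed set $I$ by regularity; the integrand below is Borel since $q\mapsto\mu\ast q$ is weak$^{*}$-continuous and $I$ is closed) one gets
\[(\mu\ast\mu)(I)=\int_{S}\mu\bigl(\{p\in S:\ p\ast q\in I\}\bigr)\,d\mu(q).\]
For $q\in I$ we have $p\ast q\in S\ast q\subseteq S\ast I\subseteq I$ for all $p\in S$, so the integrand equals $1$ there; and for every $q\in S$ we have $p\ast q\in I\ast S\subseteq I$ whenever $p\in I$, so the integrand is $\ge\mu(I)$. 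Hence, using $\mu\ast\mu=\mu$,
\[\mu(I)=(\mu\ast\mu)(I)\ \ge\ \mu(I)\cdot 1+\mu(S\setminus I)\cdot\mu(I)\ =\ 2\mu(I)-\mu(I)^{2},\]
so $\mu(I)\bigl(1-\mu(I)\bigr)\le 0$, i.e.\ $\mu(I)\in\{0,1\}$. If $\mu(I)=1$ then $S=\supp\mu\subseteq I$ because $I$ is closed, so $I=S$ and there is nothing more to prove.

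It remains to exclude $\mu(I)=0$ for a nonempty closed two-sided ideal $I$, and this is the heart of the matter. By the classical structure theory of the compact left-continuous semigroup $(S,\ast)$, it has a minimal left ideal $L$, which is closed and contains an idempotent $e$ with $L=S\ast e$; moreover each $L\ast q$ ($q\in S$) is again a minimal left ideal, every minimal left ideal is of this shape, and therefore $K(S):=L\ast S$ is the smallest two-sided ideal of $S$ and coincides with the union of all minimal left ideals. Intersecting $I$ with a minimal left ideal and invoking minimality shows that every nonempty closed two-sided ideal of $S$ contains $K(S)$, hence also $\overline{K(S)}$; by the dichotomy above it is thus enough to prove that $\mu(\overline{K(S)})>0$, for then $\mu(I)=1$ — whence $I=S$ — for every nonempty closed two-sided ideal $I$, which is exactly the claim. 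Since $\supp\bigl((\mu\ast e)\ast\mu\bigr)=\overline{(S\ast e)\ast S}=\overline{K(S)}$ (again by the support computation of the first step, now applied to the measure $\mu\ast e$ — which one should first check is still definable over a suitable small model), it would suffice to prove $(\mu\ast e)\ast\mu=\mu$, i.e.\ that $\mu$ is invariant under left $\ast$-multiplication by a minimal idempotent $e$ of its support. I expect this to be the main obstacle: it is the only step that must use the explicit description of $\ast$ via automorphisms (Proposition~\ref{prop: alter_ast}) rather than the purely formal interplay of supports, $\ast$-products and ideals used everywhere else, and it is the analogue of the classical fact that an idempotent probability measure is invariant under translation by the elements of its own support.
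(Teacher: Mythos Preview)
Your first two steps are correct and essentially reconstruct what the paper cites from \cite{GHK}: the closure of $S$ under $\ast$ (your computation $\supp(\mu\ast\mu)=\overline{S\ast S}$ is exactly the content of \cite[Proposition~8.2 and Proposition~8.3]{GHK}) and the dichotomy $\mu(I)\in\{0,1\}$ for closed two-sided ideals (which is part of the argument behind \cite[Theorem~8.8]{GHK}). One small remark: for the dichotomy you only need the inequality $(\mu\ast\mu)(I)\ge\int_S(\mu\ast q)(I)\,d\mu(q)$, and this is what actually follows from regularity (approximate $I$ from above by clopens and use that the integrand is dominated); you don't need equality.

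The genuine gap is the last step, and you say so yourself: you reduce the claim to $(\mu\ast e)\ast\mu=\mu$ for a minimal idempotent $e\in S$, but do not prove it. This is not a minor bookkeeping issue. First, to even form $(\mu\ast e)\ast\mu$ in the sense of Definition~\ref{def:star.definition} you need $\mu\ast e$ to be Borel $M$-definable; by Proposition~\ref{prop: alter_ast} this measure is $\tau(\mu|_{\FC'})|_{\FC}$ for $\tau$ with $e=\tp(\tau(\bar n)/\FC)$, i.e.\ a conjugate of $\mu$, and there is no reason for it to remain $M$-definable (it is definable over $\tau[M]$, not $M$). Second, even granting definability, the identity $(\mu\ast e)\ast\mu=\mu$ is at least as strong as what you are trying to prove: it forces $\supp(\mu)\subseteq\overline{K(S)}$, which is precisely the conclusion. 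So your proposed reduction is circular unless you have an independent argument for it, and you do not supply one. The paper avoids this entirely by invoking \cite[Theorem~8.8]{GHK}, whose proof does not go through such a translation-invariance statement.
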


\begin{proof}
We notice that the proof of Proposition 8.2 from \cite{GHK} works well in our situation (i.e.\ with tuple $\bar{n}$ in the place of tuple $\bar{m}$), and so we have that $S\ast S\subseteq S$. Thus clearly $(S,\ast)$ is a Hausdorff compact left-continuous semigroup.

The proof of Theorem 8.8 from \cite{GHK} works in our situation as well, and so we conclude that $(S,\ast)$ has no proper closed two-sided ideals (i.e.\ $S$ is simple).
\end{proof}

\begin{remark}\label{rem:left_simple_min_ideal}
    Assume that $\mu\ast\mu=\mu$.
    By the preceding remark, it follows that $(S,*)$ is left simple if and only if it has a unique minimal left ideal (for the nontrivial direction, notice that if the minimal left ideal is unique, it is also a right ideal).
\end{remark}

\begin{fact}
By Lemma 2.26 from \cite{GHK} (in fact, by its slight modification taking into account that $\bar{x}$ might be a long tuple, different from $\bar{x}'$) there is a partial $\emptyset$-type $\pi(\bar{x}';\bar{y}')\vdash\bar{x}'\equiv_{\emptyset}\bar{y}'$ (recall that $\bar{x}'$ corresponds to the tuple $\bar{m}$ and $\bar{y}'$ is its copy)
such that
\[\stab_l(\mu)=G_{\pi,\FC}=\{\sigma\in\aut(\FC)\;:\;\models\pi(\sigma(\bar{m});\bar{m})\}.\]
\end{fact}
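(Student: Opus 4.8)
The plan is to reduce the statement to \cite[Lemma 2.26]{GHK}, whose only shortcoming for our purposes is that it is stated for the short tuple (the case $\bar{n}=\bar{m}$, $\bar{x}'=\bar{x}$); the ``slight modification'' will be an argument showing that the extra variables $\bar{x}\setminus\bar{x}'$ play no role. First I would record that $\stab_l(\mu)$ is a subgroup of $\aut(\FC)$ by the routine computation ($\id\cdot\mu=\mu$; $(\sigma\tau)\cdot\mu=\sigma\cdot(\tau\cdot\mu)$; and $\sigma^{-1}\cdot\mu=\sigma^{-1}\cdot(\sigma\cdot\mu)=\mu$ for $\sigma\in\stab_l(\mu)$), and that, unwinding the action (alternatively via Corollary~\ref{rem: aut_is_*}), $\sigma\in\stab_l(\mu)$ if and only if $\mu(\varphi(\bar{x};\bar{b}))=\mu(\varphi(\bar{x};\sigma^{-1}(\bar{b})))$ for every $\varphi(\bar{x};\bar{y})\in\CL$ and every $\bar{b}\in\FC^{\bar{y}}$.

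The key step is the observation that membership in $\stab_l(\mu)$ depends only on the tuple $\sigma(\bar{m})$ (equivalently on $\sigma\restriction M$): if $\sigma,\tau\in\aut(\FC)$ and $\sigma(\bar{m})=\tau(\bar{m})$, then $\tau^{-1}\sigma$ fixes $M$ pointwise, so $\tau^{-1}\sigma\cdot\mu=\mu$ by $M$-invariance of $\mu$, whence $\sigma\cdot\mu=\tau\cdot\mu$. Thus, setting $P:=\{\sigma(\bar{m}):\sigma\in\stab_l(\mu)\}\subseteq\FC^{\bar{x}'}$, we have $\sigma\in\stab_l(\mu)\iff\sigma(\bar{m})\in P$; moreover every element of $P$ realizes $\tp(\bar{m}/\emptyset)$, and $P$ is $\aut(\FC/M)$-invariant (for $\bar{a}=\sigma(\bar{m})\in P$ and $\rho\in\aut(\FC/M)$ one has $(\rho\sigma)\cdot\mu=\rho\cdot\mu=\mu$, again by $M$-invariance, so $\rho(\bar{a})=(\rho\sigma)(\bar{m})\in P$). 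Granting that $P$ is type-definable over $M$, say $P=\psi(\FC)$ for a partial type $\psi(\bar{x}')$ over $M$, I would let $\pi(\bar{x}';\bar{y}')$ be obtained from $\psi$ by replacing the parameters enumerated by $\bar{m}$ with the corresponding variables $\bar{y}'$, and adjoin to $\pi$ the partial type ``$\bar{x}'\equiv_{\emptyset}\bar{y}'$'' (which discards nothing, as $\stab_l(\mu)\subseteq\aut(\FC)$). Then $\pi\vdash\bar{x}'\equiv_{\emptyset}\bar{y}'$ and $\stab_l(\mu)=\{\sigma\in\aut(\FC):\models\pi(\sigma(\bar{m});\bar{m})\}=G_{\pi,\FC}$, as desired.

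What remains is to show that $P$ is type-definable over $M$, and here I would run the argument of \cite[Lemma 2.26]{GHK} in the variables $\bar{x}'$: using Borel $M$-definability of $\mu$, each condition $\mu(\varphi(\bar{x};\bar{b}))=\mu(\varphi(\bar{x};\sigma^{-1}(\bar{b})))$ becomes a condition on $\tp(\sigma(\bar{m})/M)$ that cuts out an $M$-type-definable subset of $\FC^{\bar{x}'}$, and $P$ is the intersection of these over all $\varphi$ and $\bar{b}$; equivalently, $\stab_l(\mu)$ is closed in the relatively definable topology on $\aut(\FC)$. I expect this last point to be the only real obstacle, but it is precisely the content of \cite[Lemma 2.26]{GHK} --- the delicacy there being that the definability of $\mu$ is merely Borel rather than continuous --- so our task is only to check that enlarging $\bar{m}$ to $\bar{n}$ causes no trouble, and the key step above does exactly that: the stabilizer never ``sees'' the coordinates outside $\bar{x}'$.
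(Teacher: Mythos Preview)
Your proposal is correct and is precisely the argument the paper leaves implicit: the paper gives no proof of this Fact at all, only the citation to \cite[Lemma 2.26]{GHK} together with the parenthetical remark that a ``slight modification'' handles the long tuple $\bar x\supsetneq\bar x'$. Your key step --- that $M$-invariance of $\mu$ forces membership in $\stab_l(\mu)$ to depend only on $\sigma(\bar m)$, so the problem reduces to the short-tuple case --- is exactly that modification. One minor point: in the ambient setting of Section~\ref{sec: ideals and stabilizers} the measure is in $\mathfrak{M}^{\df}_{\bar n}(\FC,M)$, i.e.\ genuinely $M$-definable, so the type-definability of $P$ is even more direct than your closing paragraph suggests (no Borel subtlety arises).
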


Let us fix partial $\emptyset$-type $\pi(\bar{x}';\bar{y}')$
as in the above fact.

\begin{remark}\label{rem: stab and superfim}
    We have that $\stab_l(\mu|_{\FC'})=G_{\pi,\FC'}=\{\sigma\in\aut(\FC')\;:\;\models\pi(\sigma(\bar{m});\bar{m})\}$.
\end{remark}

\begin{proposition}\label{prop: left-right stab}
    $[\pi(\bar{x}';\bar{m})]\subseteq\stab_r(\mu)$.
\end{proposition}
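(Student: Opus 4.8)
The plan is to reduce the statement to Proposition~\ref{prop: alter_ast} together with Remark~\ref{rem: stab and superfim}. Fix $q\in[\pi(\bar{x}';\bar{m})]$, regarded as an element of $S_{\bar{n}}(\FC)$, and (as in the discussion preceding Definition~\ref{def:inverse}) write $q=\tp(\tau(\bar{n})/\FC)$ for some $\tau\in\aut(\FC')$. Since the subtuple $\bar{x}'\subseteq\bar{x}$ is the one corresponding to $\bar{m}$, the restriction of $q$ to the variables $\bar{x}'$ is $\tp(\tau(\bar{m})/\FC)$, so the hypothesis $q\vdash\pi(\bar{x}';\bar{m})$ says precisely that $\models\pi(\tau(\bar{m});\bar{m})$; that is, $\tau\in G_{\pi,\FC'}$.

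Next I would invoke the Fact fixing $\pi$ (so that $\stab_l(\mu)=G_{\pi,\FC}$) together with Remark~\ref{rem: stab and superfim}, which gives $\stab_l(\mu|_{\FC'})=G_{\pi,\FC'}$. Hence $\tau\in\stab_l(\mu|_{\FC'})$, i.e.\ $\tau(\mu|_{\FC'})=\mu|_{\FC'}$. Now $\mu$ is Borel $M$-definable, so Proposition~\ref{prop: alter_ast} applies and yields
\[
\mu\ast q=\tau(\mu|_{\FC'})|_{\FC}=(\mu|_{\FC'})|_{\FC}=\mu,
\]
where in the last equality we use that $\mu|_{\FC'}$ denotes the (unique $M$-invariant) extension of $\mu$ to $\FC'$, whose restriction to $\FC$ is again $\mu$. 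Thus $q\in\stab_r(\mu)$, which is what we wanted.

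There is no substantial obstacle here: the real content is already packaged in Proposition~\ref{prop: alter_ast}, which identifies $\mu\ast q$ with the $\tau$-translate of the canonical extension of $\mu$, and in the type-definability of $\stab_l(\mu|_{\FC'})$ recorded in Remark~\ref{rem: stab and superfim}. The only point requiring care is the bookkeeping with the tuples of variables, namely checking that membership of $q$ in $[\pi(\bar{x}';\bar{m})]$ really does translate into the condition $\models\pi(\tau(\bar{m});\bar{m})$ on an automorphism $\tau$ witnessing $q$, and that this condition is then to be read inside the larger monster model $\FC'$ (so that Remark~\ref{rem: stab and superfim}, rather than the Fact about $G_{\pi,\FC}$, is the right tool to apply).
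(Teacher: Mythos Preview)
Your proof is correct and follows essentially the same approach as the paper: both pick $\tau\in\aut(\FC')$ with $q=\tp(\tau(\bar n)/\FC)$, use Remark~\ref{rem: stab and superfim} to deduce $\tau\in\stab_l(\mu|_{\FC'})$, and then identify $\mu\ast q$ with $\tau(\mu|_{\FC'})|_{\FC}$. The only difference is cosmetic: the paper writes out the formula-by-formula computation of $\mu\ast q$ explicitly, whereas you cite Proposition~\ref{prop: alter_ast}, which packages exactly that computation.
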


\begin{proof}
We take $q=\tp(\sigma(\bar{n})/\FC)\in [\pi(\bar{x}';\bar{m})]$ with $\sigma\in\aut(\FC')$.
Then $\sigma\in G_{\pi,\FC'}$ and so $\sigma\in\stab_l(\mu|_{\FC'})$.
For every $\CL$-formula $\theta(\bar{x};\bar{y})$ and parameter $\bar{b}\in \FC^{\bar{y}}$ we have
\begin{IEEEeqnarray*}{rCl}
\Big(\mu\ast\tp\big(\sigma(\bar{n})/\FC\big)\Big)\big(\theta(\bar{x};\bar{b})\big) &=& \mu|_{\FC'}\big(\theta(\bar{x};\sigma^{-1}(\bar{b}))\big) \\
&=& (\sigma\cdot \mu|_{\FC'})\big(\theta(\bar{x};\bar{b})\big) \\
&=& \mu\big(\theta(\bar{x};\bar{b})\big)
\end{IEEEeqnarray*}
and we see that $\mu\ast q=\mu$, i.e.\ $q\in\stab_r(\mu)$.
\end{proof}

\begin{lemma}\label{lemma: one minimal}
    If $\mu\ast\mu=\mu$
    and $L$ is a minimal left ideal of $S=\supp(\mu)$. Then:
    \begin{itemize}
        \item
        for each $q\in L$, we have $\supp(\mu*q)\subseteq L$,
        \item
        if $L\cap \stab_r(\mu)\neq\emptyset$, then $S=L$.
    \end{itemize}
\end{lemma}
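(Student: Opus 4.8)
The plan is to prove the sharper statement that $\supp(\mu\ast q)=S\ast q$ for every $q\in L$. (We only use the hypothesis $\mu\ast\mu=\mu$ through the fact that it makes $S$ into a semigroup, via Proposition~\ref{rem: idempotent support 1}; minimality of $L$ will not be needed, only that $L$ is a left ideal of $S$.) Both bullets then follow at once: if $q\in L$ then $S\ast q\subseteq S\ast L\subseteq L$, so $\supp(\mu\ast q)=S\ast q\subseteq L$; and if in addition $q\in\stab_r(\mu)$, then $\mu\ast q=\mu$, so $S=\supp(\mu)=\supp(\mu\ast q)=S\ast q\subseteq L$, which together with $L\subseteq S$ gives $S=L$.

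To prove $\supp(\mu\ast q)=S\ast q$, write $q=\tp(\tau(\bar n)/\FC)$ with $\tau\in\aut(\FC')$. First, $S\ast q$ is closed: it is the image of the compact set $S$ under the map $p\mapsto p\ast q$, which is continuous by the left-continuity of $\ast$ (Lemma~\ref{lemma:ast_semigroup_for_types}). Hence it suffices to prove that for every $\CL$-formula $\theta(\bar x;\bar y)$ and every $\bar b\in\FC^{\bar y}$
\[(\mu\ast q)(\theta(\bar x;\bar b))>0\quad\iff\quad[\theta(\bar x;\bar b)]\cap(S\ast q)\neq\emptyset,\]
for then a point $r$ lies outside $\supp(\mu\ast q)$ precisely when some $[\theta(\bar x;\bar b)]$ containing $r$ has $(\mu\ast q)$-measure $0$, i.e.\ (by the equivalence) precisely when some clopen neighbourhood of $r$ is disjoint from $S\ast q$, i.e.\ precisely when $r\notin\overline{S\ast q}=S\ast q$. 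To verify the equivalence, let $\bar y_0\subseteq\bar y$ be the finite tuple of variables occurring in $\theta$, put $\bar b_0:=\bar b|_{\bar y_0}$ (so $\theta(\bar x;\bar b)=\theta(\bar x;\bar b_0)$ and, since $\tau$ acts coordinatewise, $\theta(\bar x;\tau^{-1}(\bar b))=\theta(\bar x;\tau^{-1}(\bar b_0))$), and pick $\bar e_0\in\FC^{\bar y_0}$ with $\bar e_0\equiv_M\tau^{-1}(\bar b_0)$, which is possible because $\bar y_0$ is finite and $\FC$ is a monster model. By Proposition~\ref{prop: alter_ast}, together with the $M$-invariance of the extensions $\mu|_{\FC'}$ and $p|_{\FC'}$, one computes for every $p\in S$:
\[(\mu\ast q)(\theta(\bar x;\bar b))=\mu|_{\FC'}(\theta(\bar x;\tau^{-1}(\bar b_0)))=\mu|_{\FC'}(\theta(\bar x;\bar e_0))=\mu(\theta(\bar x;\bar e_0)),\]
and similarly $(p\ast q)(\theta(\bar x;\bar b))$ equals $1$ if $\theta(\bar x;\bar e_0)\in p$ and $0$ otherwise — here we use the standing assumption that every $p\in S=\supp(\mu)$ is $M$-invariant (i.e.\ $\supp(\mu)\subseteq S^{\inv}_{\bar n}(\FC,M)$). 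Since $[\theta(\bar x;\bar e_0)]$ is clopen in $S_{\bar n}(\FC)$ and $\mu$ is a regular measure with support $S$, we get that $(\mu\ast q)(\theta(\bar x;\bar b))=\mu(\theta(\bar x;\bar e_0))>0$ iff $[\theta(\bar x;\bar e_0)]\cap S\neq\emptyset$, iff $\theta(\bar x;\bar e_0)\in p$ for some $p\in S$, iff (by the computation of $p\ast q$) $p\ast q\in[\theta(\bar x;\bar b)]$ for some $p\in S$, which is exactly $[\theta(\bar x;\bar b)]\cap(S\ast q)\neq\emptyset$.

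I do not anticipate a real obstacle. The only points that need some care are: (i) passing to the finite subtuple $\bar y_0$ before invoking saturation of $\FC$ to realise $\tp(\tau^{-1}(\bar b_0)/M)$ by $\bar e_0$ — this genuinely matters, because in the generality considered here $\bar y$ may enumerate a large model, possibly $\FC$ itself, so $\tp(\tau^{-1}(\bar b)/M)$ in the full tuple of variables need not be realised in $\FC$ — and (ii) the bookkeeping between $\FC$ and the bigger monster $\FC'$ and the unique $M$-invariant extensions, which is already packaged in Proposition~\ref{prop: alter_ast}.
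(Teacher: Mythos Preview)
Your argument is correct, and the overall structure matches the paper's proof: establish $\supp(\mu\ast q)\subseteq L$ via the closure of $S\ast q$, then read off $S=L$ when $q\in\stab_r(\mu)$. The difference is only in how the key inclusion is obtained. The paper simply cites \cite[Proposition 8.3]{GHK} for $\supp(\mu\ast q)\subseteq\overline{S\ast q}$ and then uses that $L$ is closed; you instead prove the sharper equality $\supp(\mu\ast q)=S\ast q$ directly, via the computation $(\mu\ast q)(\theta(\bar x;\bar b))=\mu(\theta(\bar x;\bar e_0))$ and the analogous one for types. This makes your proof self-contained and yields a slightly stronger statement at no real extra cost; the care you flag about passing to the finite subtuple $\bar y_0$ before realizing the type over $M$ in $\FC$ is exactly right and is the only subtle point.
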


\begin{proof}
    Let $L$ be a minimal left ideal of $S$, let $q\in L$.
    Then $S\ast q=L$. By Proposition 8.3 from \cite{GHK} (again, its slight modification to work with possibly longer tuple $\bar{n}$ in the place of $\bar{m}$), 
    we have that $\supp(\mu\ast q)$ is contained in the closure of $S \ast q$, which is contained in $L$.

    Now, if $q\in L\cap \stab_r(\mu)$, then we have immediately that
    \[
    S=\supp(\mu)=\supp(\mu*q)\subseteq L\subseteq S.\qedhere
    \]
\end{proof}

Theorem 4.1 from \cite{Pym62} states that a measure on a left-simple kernel (a type of topological semigroup) is idempotent if and only if it is right invariant over its support. We adapted this idea in the following theorem, where the aforementioned properties are placed in a bit different order:

\begin{theorem}\label{thm: minimal support vs stabilizer}
    Let $\mu\in\mathfrak{M}^{\df}_{\bar{n}}(\FC,M)$ be idempotent with $\supp(\mu)\subseteq S^{\inv}_{\bar{n}}(\FC,M)$.
    %Then $\stab_l(\mu)=G_{\pi,\FC}$ for some partial $\emptyset$-type $\pi(\bar{x};\bar{y})\vdash\bar{x}\equiv_{\emptyset}\bar{y}$.
    The following are equivalent
    \begin{enumerate}
        %\item $\supp(\mu)\subseteq [\pi(\bar{x};\bar{m})]$,
        \item $\supp(\mu)\subseteq \stab_r(\mu)$,
        \item $\supp(\mu)$ is left-simple, equivalently (by Remark~\ref{rem:left_simple_min_ideal}) it has a unique minimal left ideal $L$ (and then $\supp(\mu)=L$).
        %\todo{Dodałem to odwołanie, zamnieniłem i.e.\ na "equivalently"}
    \end{enumerate}
\end{theorem}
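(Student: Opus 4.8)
The plan is to prove the two implications separately, using the structural facts about $S = \supp(\mu)$ established above: by Proposition~\ref{rem: idempotent support 1}, $(S,\ast)$ is a Hausdorff compact left-continuous semigroup with no proper closed two-sided ideal, and by Remark~\ref{rem:left_simple_min_ideal} left-simplicity is equivalent to having a unique minimal left ideal. Throughout I will repeatedly use Lemma~\ref{lemma: one minimal}, which says that for a minimal left ideal $L$ and any $q \in L$ one has $\supp(\mu\ast q) \subseteq L$, and that $L \cap \stab_r(\mu) \neq \emptyset$ forces $S = L$.

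For the implication (2)$\Rightarrow$(1): assume $S$ is left-simple, so it has a unique minimal left ideal $L$ and in fact $S = L$. I would pick an idempotent $u \in S$ (which exists since minimal left ideals of a compact left-continuous semigroup always contain an idempotent, e.g.\ by Fact A.8 of \cite{rzepecki2018} cited above). Since $S = L = S \ast u$, every $q \in S$ can be written $q = r \ast u$ for some $r \in S$, hence $q \ast u = r \ast u \ast u = r \ast u = q$, so $u$ is a right identity for $S$. I then want to show $u \in \stab_r(\mu)$: since $S = L$ is a single minimal left ideal and $\mu$ is idempotent with support $S$, one applies Lemma~\ref{lemma: one minimal} with $L = S$ — actually the cleaner route is to observe $\supp(\mu \ast u) \subseteq S \ast u = S$ and, more to the point, use that $\mu\ast\mu = \mu$ together with Remark~\ref{rem: old star} (writing $\mu\ast\mu$ as an integral of $\mu\ast q$ over $q\in S$) to deduce that $\mu\ast q = \mu$ for $\mu$-almost every $q$, hence for a dense set of $q$ in $S$, hence (by the continuity of $q\mapsto\mu\ast q$ established in the Remark before Proposition~\ref{rem: idempotent support 1}) for all $q\in S$; that is exactly $S \subseteq \stab_r(\mu)$.

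For the implication (1)$\Rightarrow$(2): assume $\supp(\mu) = S \subseteq \stab_r(\mu)$. Let $L$ be any minimal left ideal of $S$; such exists by compactness and left-continuity. Pick any $q \in L$. Since $q \in S \subseteq \stab_r(\mu)$, we have $\mu \ast q = \mu$, so $q \in L \cap \stab_r(\mu) \neq \emptyset$, and the second bullet of Lemma~\ref{lemma: one minimal} gives $S = L$. Thus $S$ equals its (only) minimal left ideal, so $S$ is left-simple and, by Remark~\ref{rem:left_simple_min_ideal}, the minimal left ideal is unique. This direction is short; the content is entirely packaged in Lemma~\ref{lemma: one minimal}.

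The main obstacle is the almost-everywhere-to-everywhere passage in (2)$\Rightarrow$(1): from $\mu\ast\mu=\mu$ and the integral formula $(\mu\ast\mu)(\theta(\bar x;\bar b)) = \int_{S}(\mu\ast q)(\theta(\bar x;\bar b))\,d\mu(q)$ one first gets, for each fixed $\CL$-formula $\theta$ and parameter $\bar b$, that $(\mu\ast q)(\theta(\bar x;\bar b)) = \mu(\theta(\bar x;\bar b))$ for $\mu$-almost all $q$; since $\mu$ has full support on $S$ this set of $q$ is dense, and then left-continuity of $q\mapsto\mu\ast q$ (more precisely continuity of $q \mapsto (\mu\ast q)(\theta(\bar x;\bar b))$) upgrades this to all $q\in S$ for that particular $\theta,\bar b$; finally one intersects over the (set-many, but the argument is pointwise in $q$ once we know it holds on a dense set for each $\theta,\bar b$ and $\mu\ast q$, $\mu$ are genuine measures) — here one must be slightly careful and instead argue: the set $\{q \in S : \mu \ast q = \mu\} = \stab_r(\mu) \cap S$ is closed (by the Remark that $\stab_r(\mu)$ is closed and $S$ is closed) and of full $\mu$-measure, hence equals $S = \supp(\mu)$. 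That last reformulation sidesteps the "set-many formulas" issue cleanly, so the real work is just assembling these standard measure-theoretic facts in the right order.
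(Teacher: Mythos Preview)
Your (1)$\Rightarrow$(2) direction is fine and matches the paper exactly: it is packaged in Lemma~\ref{lemma: one minimal}.

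Your (2)$\Rightarrow$(1) direction has a genuine gap. From the integral identity
\[
\mu(\theta(\bar x;\bar b)) = (\mu\ast\mu)(\theta(\bar x;\bar b)) = \int_{S}(\mu\ast q)(\theta(\bar x;\bar b))\,d\mu(q)
\]
you claim that $(\mu\ast q)(\theta(\bar x;\bar b)) = \mu(\theta(\bar x;\bar b))$ for $\mu$-almost every $q$. That inference is false: knowing the average of a continuous $[0,1]$-valued function equals some value $c$ tells you nothing about the function being a.e.\ equal to $c$. Notice also that your argument, as written, never uses the hypothesis that $S$ is left-simple; if it were valid it would prove $\supp(\mu)\subseteq\stab_r(\mu)$ for every idempotent $\mu$, hence (via the other direction) that every idempotent measure has left-simple support, which is precisely the open Question~\ref{question: left-simple}.

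The paper's argument for (2)$\Rightarrow$(1) does use left-simplicity in an essential way. One fixes $\theta(\bar x;\bar b)$, sets $D(q):=(\mu\ast q)(\theta(\bar x;\bar b))$, which is continuous on the compact set $S$, and lets $q_0$ be a point where $D$ attains its maximum. Proposition~8.7 of \cite{GHK} then gives $D(p\ast q_0)=D(q_0)$ for all $p\in S$. Left-simplicity is exactly what lets you conclude $S\ast q_0=S$, so $D$ is constant on $S$; plugging this constant value back into the integral formula identifies the constant as $\mu(\theta(\bar x;\bar b))$. Your idempotent/right-identity detour is not needed, but the maximum-plus-left-simplicity step (or something equivalent) is.
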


\begin{proof}
    (1) implies (2) immediately by Lemma~\ref{lemma: one minimal}.

    Now, assume (2).
    Consider a formula $\theta(\bar{x};\bar{y})$ and a parameter $\bar{b}\in\FC^{\bar{x}}$. The following function
   \[D:S\ni q\mapsto (\mu\ast q)(\theta(\bar{x};\bar{b}))\]
   is continuous. 
    Assume that $D$ achieves its maximum at $q_0\in S$. Then, by Proposition 8.7 from \cite{GHK} (more precisely, by its variant for tuple $\bar{n}$), we have that $D(p\ast q_0)=D(q_0)$ for every $p\in S$. By (2) it follows that $S=S*q_0$, so $D$ is constant on $S$.

    Fix $q\in S$ and note that
    \begin{IEEEeqnarray*}{rCl}
    \mu\big(\theta(\bar{x};\bar{b})\big) &=& (\mu\ast\mu)\big(\theta(\bar{x};\bar{b})\big) = \int\limits_{p\in S}(\mu\ast p)\big(\theta(\bar{x};\bar{b})\big)d\mu(p) \\
    &=& \int\limits_{p\in S} D(p)\,d\mu(p) = D(q)=(\mu\ast q)\big(\theta(\bar{x};\bar{b})\big).
    \end{IEEEeqnarray*}
    Because $\theta(\bar{x};\bar{b})$ was arbitrary,
    we see that $\mu\ast q=\mu$ for every $q\in S$.
    Hence $\supp(\mu)=S\subseteq\stab_r(\mu)$.
\end{proof}

Recall that to prove Conjecture~\ref{conjecture: main conjecture},
we need to show that $\supp(\mu)\subseteq[\pi(\bar{x}';\bar{m})]$.
By combining Proposition~\ref{prop: left-right stab} with Theorem~\ref{thm: minimal support vs stabilizer}, we have the following picture
\[\xymatrix{ & \supp(\mu) \ar@{--}[d]_-{\subseteq}^-{\iff\text{ left-simple}}\\
[\pi(\bar{x}';\bar{m})] \ar@{-}[r]^-{\subseteq} & \stab_r(\mu)}\]
yielding as a corollary:
\begin{cor}
    \label{cor:supp_cont_pi_ls}
    If $\supp(\mu)\subseteq[\pi(\bar{x}';\bar{m})]$
    (i.e.\ if $\mu$ is support transitive in the terminology of \cite[Definition 3.46]{CGK}), then $\supp(\mu)$ is left-simple. 
\end{cor}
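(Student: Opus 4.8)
The plan is to simply chain together the two preceding results, exactly as suggested by the diagram that precedes the statement. First I would invoke Proposition~\ref{prop: left-right stab}, which gives the inclusion $[\pi(\bar{x}';\bar{m})]\subseteq\stab_r(\mu)$. Combining this with the hypothesis $\supp(\mu)\subseteq[\pi(\bar{x}';\bar{m})]$, we obtain $\supp(\mu)\subseteq\stab_r(\mu)$, which is precisely condition~(1) of Theorem~\ref{thm: minimal support vs stabilizer}. Then I would apply the implication (1)$\Rightarrow$(2) of that theorem to conclude that $\supp(\mu)$ is left-simple (and, by Remark~\ref{rem:left_simple_min_ideal}, that it equals its unique minimal left ideal).

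The only points requiring care are the standing hypotheses: Theorem~\ref{thm: minimal support vs stabilizer} applies to $\mu\in\mathfrak{M}^{\df}_{\bar{n}}(\FC,M)$ that is idempotent and satisfies $\supp(\mu)\subseteq S^{\inv}_{\bar{n}}(\FC,M)$, so I would note that these are in force in the present context (inherited from the setup of Conjecture~\ref{conjecture: main conjecture}); in particular idempotency of $\mu$ is what makes $(\supp(\mu),\ast)$ a subsemigroup, via Proposition~\ref{rem: idempotent support 1}, so that left-simplicity is even meaningful. I do not expect any genuine obstacle here — the corollary is a purely formal consequence of Proposition~\ref{prop: left-right stab} and Theorem~\ref{thm: minimal support vs stabilizer}, and the parenthetical remark linking ``$\supp(\mu)\subseteq[\pi(\bar{x}';\bar{m})]$'' to support transitivity in the sense of \cite[Definition 3.46]{CGK} is just a dictionary entry requiring no argument.
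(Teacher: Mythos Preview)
Your proposal is correct and matches the paper's own argument exactly: the corollary is obtained by chaining Proposition~\ref{prop: left-right stab} with the implication (1)$\Rightarrow$(2) of Theorem~\ref{thm: minimal support vs stabilizer}, precisely as the diagram indicates. Your remark about the standing hypotheses (idempotency, $\supp(\mu)\subseteq S^{\inv}_{\bar{n}}(\FC,M)$) is apt and handled appropriately.
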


Therefore, we consider replacing the original first point in Conjecture~\ref{conjecture: main conjecture} with the following:
\begin{enumerate}
    \item $\mu$ is idempotent and $\supp(\mu)$ is left-simple.
\end{enumerate}

\subsection{Restrictions and summary}
In this subsection we focus on Conjecture~\ref{conjecture: main conjecture}, thus we ease the notation by setting $\bar{n}=\bar{m}$ (and so $\bar{x}=\bar{x}'$).
Recall that $\mu\in\mathfrak{M}^{\df}_{\bar{m}}(\FC,M)$ and $\supp(\mu)\subseteq S^{\inv}_{\bar{m}}(\FC,M)$.

We introduce the following property:
\begin{equation}
    \label{eq:restricted_lascar_inv}
    \tag{$\diamondsuit$}
    \big(\forall\sigma\in\aut(\FC'),\,\tp(\sigma(\bar{m})/\FC)\in\supp(\mu)\big)
    \big(\mu|_{\FC'}\text{ is }\sigma^{-1}[M]\text{-invariant}\big)
\end{equation}

\begin{remark}\label{rem: diamond and Lascar}
    A standard argument shows that if $\mu$ is Lascar-invariant then so is $\mu|_{\FC'}$, which trivially implies \eqref{eq:restricted_lascar_inv} for $\mu$.
\end{remark}

\begin{lemma}\label{lemma: diamond}
\begin{enumerate}
    \item
    If $\supp(\mu)\subseteq[\pi(\bar{x};\bar{m})]$ then \eqref{eq:restricted_lascar_inv}.

    \item
    Assume that $\mu$ be idempotent, superfim over $M$ with left-simple $\supp(\mu)$,
    then the converse holds as well, i.e.
    $\supp(\mu)\subseteq[\pi(\bar{x};\bar{m})]$ and \eqref{eq:restricted_lascar_inv} are equivalent.
\end{enumerate}
\end{lemma}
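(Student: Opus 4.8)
For part (1), assume $\supp(\mu)\subseteq[\pi(\bar{x};\bar{m})]$ and fix $\sigma\in\aut(\FC')$ with $\tp(\sigma(\bar m)/\FC)\in\supp(\mu)$. Then $\models\pi(\sigma(\bar m);\bar m)$, so $\sigma\in G_{\pi,\FC'}=\stab_l(\mu|_{\FC'})$ by Remark~\ref{rem: stab and superfim}. The plan is to use this to show $\mu|_{\FC'}$ is $\sigma^{-1}[M]$-invariant: if $\bar{d}\equiv_{\sigma^{-1}[M]}\bar{d}'$ are tuples, then $\sigma(\bar d)\equiv_M\sigma(\bar d')$, so (since $\mu|_{\FC'}$ is $M$-invariant) $\mu|_{\FC'}$ cannot distinguish formulas with parameters $\sigma(\bar d)$ and $\sigma(\bar d')$; now transport this through the action of $\sigma^{-1}\in\stab_l(\mu|_{\FC'})$, using $\sigma^{-1}\cdot(\mu|_{\FC'})=\mu|_{\FC'}$ and the formula $(\sigma^{-1}\cdot\nu)(\theta(\bar x;\bar b))=\nu(\theta(\bar x;\sigma(\bar b)))$, to conclude $\mu|_{\FC'}(\theta(\bar x;\bar d))=\mu|_{\FC'}(\theta(\bar x;\bar d'))$. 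This is the routine direction.

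For part (2), one direction is immediate from (1), so assume $\mu$ is idempotent, superfim over $M$, with $\supp(\mu)$ left-simple, and assume \eqref{eq:restricted_lascar_inv}; we must deduce $\supp(\mu)\subseteq[\pi(\bar{x};\bar{m})]$. Since $\supp(\mu)$ is left-simple, by Theorem~\ref{thm: minimal support vs stabilizer} we already know $\supp(\mu)\subseteq\stab_r(\mu)$, i.e.\ $\mu\ast q=\mu$ for every $q\in\supp(\mu)$. Fix $q=\tp(\sigma(\bar m)/\FC)\in\supp(\mu)$ with $\sigma\in\aut(\FC')$. By Proposition~\ref{prop: alter_ast}, $\mu\ast q=\sigma(\mu|_{\FC'})|_{\FC}$, so $\mu\ast q=\mu$ says $\sigma(\mu|_{\FC'})|_{\FC}=\mu$, and since $M$-invariant extensions to $\FC'$ are unique, $\sigma(\mu|_{\FC'})=\mu|_{\FC'}$, i.e.\ $\sigma\in\stab_l(\mu|_{\FC'})$. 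The remaining task is to upgrade this to $\models\pi(\sigma(\bar m);\bar m)$. This is where I expect the main difficulty: $\stab_l(\mu|_{\FC'})=G_{\pi,\FC'}$ as \emph{subgroups of $\aut(\FC')$}, but $\sigma\in\aut(\FC')$ need not fix $\FC$ pointwise, so a priori $\sigma$ being in this group over $\FC'$ already gives $\models\pi(\sigma(\bar m);\bar m)$ by Remark~\ref{rem: stab and superfim}. So in fact once $\sigma\in\stab_l(\mu|_{\FC'})$ is established, we are done — the hypothesis \eqref{eq:restricted_lascar_inv} enters only to justify that $\stab_l(\mu|_{\FC'})$ really is $G_{\pi,\FC'}$ via a superfim/invariance argument, rather than to finish the chain $q\mapsto\models\pi$.

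Let me reconsider where \eqref{eq:restricted_lascar_inv} is genuinely needed. The delicate point is this: $\stab_l(\mu|_{\FC'})=G_{\pi,\FC'}$ is an equality of subgroups of $\aut(\FC')$, and it is guaranteed by Lemma~2.26 of \cite{GHK} (as cited via Remark~\ref{rem: stab and superfim}) using that $\mu|_{\FC'}$ is superfim over $M$ (Remark~\ref{rem: superfim}). The possible gap is that $\stab_l(\mu)=G_{\pi,\FC}$ and $\stab_l(\mu|_{\FC'})=G_{\pi,\FC'}$ use \emph{the same} $\pi$; the argument that an element $\sigma\in\stab_l(\mu|_{\FC'})$ satisfies $\models\pi(\sigma(\bar m);\bar m)$ is built into that identification. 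So the real content of (2) is: from $q=\tp(\sigma(\bar m)/\FC)\in\supp(\mu)\subseteq\stab_r(\mu)$ conclude $\sigma\in\stab_l(\mu|_{\FC'})$ — and I expect this to require showing that the \emph{choice} of $\sigma\in\aut(\FC')$ lifting $q$ is immaterial, which is exactly what \eqref{eq:restricted_lascar_inv} provides: if $\sigma,\sigma'$ both lift $q$ then $\sigma'=\sigma\circ\tau$ with $\tau$ fixing $\FC$ pointwise, hence in particular $\tau$ fixes $M$ and $\sigma^{-1}[M]$-type-equals; \eqref{eq:restricted_lascar_inv} then forces $\sigma(\mu|_{\FC'})=\sigma'(\mu|_{\FC'})$, so $\mu\ast q=\mu$ pins down a well-defined element of $\stab_l(\mu|_{\FC'})/(\text{pointwise stabilizer of }\FC)$, which is what we need. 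The hard part is organizing this bookkeeping cleanly so that the quantifier ``$\forall\sigma$ lifting a support type'' in \eqref{eq:restricted_lascar_inv} is exactly consumed, and confirming that superfim (not just fim) is what makes $\stab_l(\mu|_{\FC'})$ type-definable by $\pi$ over $\FC'$ so that the passage from ``$\sigma\in\stab_l(\mu|_{\FC'})$'' to ``$\models\pi(\sigma(\bar m);\bar m)$'' is legitimate.
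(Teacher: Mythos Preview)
Part (1) is essentially the paper's argument and is fine.

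In part (2) there is a real gap at the step ``since $M$-invariant extensions to $\FC'$ are unique, $\sigma(\mu|_{\FC'})=\mu|_{\FC'}$''. Uniqueness of the $M$-invariant extension only applies once you know that $\sigma(\mu|_{\FC'})$ is itself $M$-invariant; a priori it is only $\sigma[M]$-invariant, being the $\sigma$-translate of an $M$-invariant measure. Saying that $\sigma(\mu|_{\FC'})$ is $M$-invariant is literally the same as saying that $\mu|_{\FC'}$ is $\sigma^{-1}[M]$-invariant, and that is \emph{precisely} the content of \eqref{eq:restricted_lascar_inv}. So \eqref{eq:restricted_lascar_inv} is consumed right there --- not in any bookkeeping about choices of lift. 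The conclusion $\models\pi(\sigma(\bar m);\bar m)$ depends only on $q=\tp(\sigma(\bar m)/\FC)$ (since $\pi$ is a $\emptyset$-type and $\bar m\subseteq\FC$), so well-definedness with respect to the lift is a non-issue, and your reconsideration paragraph is chasing the wrong thing.

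If you patch the argument by inserting \eqref{eq:restricted_lascar_inv} at that spot --- use it to get $M$-invariance of $\sigma(\mu|_{\FC'})$, combine with $(\sigma(\mu|_{\FC'}))|_{\FC}=\mu\ast q=\mu$ from Proposition~\ref{prop: alter_ast} and Theorem~\ref{thm: minimal support vs stabilizer}, and invoke uniqueness of the $M$-invariant extension --- you get $\sigma\in\stab_l(\mu|_{\FC'})=G_{\pi,\FC'}$ directly. This is actually a little more direct than the paper's route: the paper only records that $\sigma\cdot\mu|_{\FC'}$ and $\mu|_{\FC'}$ agree over $M$, and then invokes a stationarity lemma for superfim measures (\cite[Lemma~6.12]{GHK}) to upgrade this to full equality over $\FC'$; your corrected line uses agreement over all of $\FC$ and bypasses that appeal.
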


\begin{proof}
    Let $\sigma\in\aut(\FC')$ be such that $\tp(\sigma(\bar{m})/\FC)\in\supp(\mu)$.

    First, assume $\supp(\mu)\subseteq[\pi(\bar{x};\bar{m})]$, it means that
    $\sigma\in G_{\pi,\FC'}$. Hence $\sigma\cdot\mu|_{\FC'}=\mu|_{\FC'}$ (by Remark~\ref{rem: stab and superfim}) and so $\sigma\cdot\mu|_{\FC'}$
    is $M$-invariant. Thus $\mu|_{\FC'}$ is $\sigma^{-1}[M]$-invariant.

    For the proof of the second point, assume \eqref{eq:restricted_lascar_inv}, in particular
    $\mu|_{\FC'}$ is $\sigma^{-1}[M]$-invariant.
    By Theorem~\ref{thm: minimal support vs stabilizer}, we know that $\supp(\mu)\subseteq\stab_r(\mu)$.
    If $\theta(\bar{x};\bar{y})\in\CL$ then, by Proposition~\ref{prop: alter_ast},
    \begin{IEEEeqnarray*}{rCl}
    \mu\big(\theta(\bar{x};\bar{m})\big) &=&
    \Big( \mu\ast \tp\big(\sigma(\bar{m})/\FC\big)\Big)\big(\theta(\bar{x};\bar{m})\big) \\
    &=& \mu|_{\FC'}\big(\theta(\bar{x};\sigma^{-1}(\bar{m}))\big)
    = (\sigma\cdot \mu|_{\FC'})\big(\theta(\bar{x};\bar{m})\big).
    \end{IEEEeqnarray*}
    Therefore $(\mu|_{\FC'})|_M=\mu|_M=(\sigma\cdot \mu|_{\FC'})|_M$.

Because $\mu|_{\FC'}$ is $\sigma^{-1}[M]$-invariant,
$\sigma\cdot\mu|_{\FC'}$ must be $M$-invariant.
Because $\sigma\cdot\mu|_{\FC'}$ is Borel definable (as a shift of a Borel $M$-definable measure) and $M$-invariant, $\sigma\cdot\mu|_{\FC'}$ is Borel $M$-definable by Corollary 2.2 from \cite{InTheWild}.

Because $\mu|_{\FC'}$ is superfim over $M$ by Remark~\ref{rem: superfim}
and Borel $M$-definable, and $\sigma\cdot\mu|_{\FC'}$ is Borel $M$-definable,
we may use Lemma 6.12 from \cite{GHK} to conclude that
$(\mu|_{\FC'})|_M=(\sigma\cdot \mu|_{\FC'})|_M$ implies $\mu|_{\FC'}=\sigma\cdot\mu|_{\FC'}$.
It means that $\sigma\in\stab_l(\mu|_{\FC'})=G_{\pi,\FC'}$.
Therefore $\tp(\sigma(\bar{m})/\FC)\in [\pi(\bar{x}';\bar{m})]$ as expected.
\end{proof}

\begin{lemma}\label{lemma: 4 equivalent conditions}
    Suppose $\mu\in\mathfrak{M}^{\df}_{\bar{m}}(\FC,M)$ is a superfim over $M$ and idempotent.
    We know that $\stab_l(\mu)=G_{\pi,\FC}$ for some partial type $\pi(\bar x,\bar y)\vdash\bar{x}\equiv_{\emptyset}\bar{y}$.
    Consider the following conditions:
    \begin{enumerate}
        \item
        $\mu$ satisfies \eqref{eq:restricted_lascar_inv},
        \item
        $\supp(\mu)$ is left-simple (equivalently, $\supp(\mu)\subseteq \stab_r(\mu)$),
        \item 
        $\supp(\mu)\subseteq[\pi(\bar{x};\bar{m})]$,
        \item
        $\mu$ is the unique (left) $G_{\pi,\FC}$-invariant measure in $\mathfrak{M}^{\Borel}_{\pi(\bar{x};\bar{m})}(\FC,M)$.
    \end{enumerate}
    Then (1)$\land$(2) is equivalent to each of (3) and (4).
\end{lemma}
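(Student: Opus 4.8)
The plan is to prove the equivalences by establishing a cycle together with the already-available results. First I would record that by Lemma~\ref{lemma: diamond}(1) we have (3)$\Rightarrow$(1), and by Corollary~\ref{cor:supp_cont_pi_ls} we have (3)$\Rightarrow$(2); so (3) implies the conjunction (1)$\land$(2). Conversely, Lemma~\ref{lemma: diamond}(2) gives exactly that under the standing hypothesis (superfim, idempotent) the conjunction (1)$\land$(2) implies (3): indeed (2) supplies left-simplicity of $\supp(\mu)$ and (1) is the property \eqref{eq:restricted_lascar_inv}, which are precisely the extra assumptions of that part. This already yields (1)$\land$(2)$\iff$(3). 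It remains to fold in (4).

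Next I would handle (3)$\iff$(4). For (4)$\Rightarrow$(3): if $\mu$ is the unique (left) $G_{\pi,\FC}$-invariant measure in $\mathfrak{M}^{\Borel}_{\pi(\bar{x};\bar{m})}(\FC,M)$, then in particular $\mu$ is a measure in $\mathfrak{M}^{\Borel}_{\pi(\bar{x};\bar{m})}(\FC,M)$, i.e.\ $\mu$ is concentrated on $[\pi(\bar{x};\bar{m})]$, which is exactly $\supp(\mu)\subseteq[\pi(\bar{x};\bar{m})]$. For (3)$\Rightarrow$(4): assuming $\supp(\mu)\subseteq[\pi(\bar{x};\bar{m})]$, by Proposition~\ref{prop: left-right stab} and Fact~\ref{rem: better 6.3} (with the observation $\stab_l(\mu)=G_{\pi,\FC}$ and $\supp(\mu)\subseteq\stab_r(\mu)$ via Theorem~\ref{thm: minimal support vs stabilizer}, which applies since (3) implies (2) by Corollary~\ref{cor:supp_cont_pi_ls}), $\mu$ is left $G_{\pi,\FC}$-invariant; and uniqueness of such a measure among Borel $M$-definable measures concentrated on $[\pi(\bar{x};\bar{m})]$ follows from the superfim hypothesis by \cite[Corollary 6.13]{GHK} (the uniqueness statement for superfim measures already invoked in the discussion after Conjecture~\ref{conjecture: main conjecture}). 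So (3)$\iff$(4), completing the chain.

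The main obstacle I anticipate is making the uniqueness direction (3)$\Rightarrow$(4) fully precise: one must check that the cited uniqueness result for superfim/fim measures really applies to \emph{left} $G_{\pi,\FC}$-invariant Borel $M$-definable measures concentrated on $[\pi(\bar x;\bar m)]$, rather than to some variant with $G^{00}$ or KP-invariance or with right invariance. The paper's discussion right after Conjecture~\ref{conjecture: main conjecture} already asserts that ``in (2) uniqueness follows automatically from the assumption that $\mu$ is fim over $M$ \dots in NIP for measures'', and more relevantly that it holds ``in an arbitrary theory \dots for superfim measures'', citing \cite[Corollary 6.13]{GHK}; so I would simply quote that, being careful to note that $\stab_l(\mu)=G_{\pi,\FC}$ is the hypothesis that pins down which $\pi$ we use, so that (3) forces $\mu$ itself to be $G_{\pi,\FC}$-invariant and then uniqueness identifies it. A secondary, purely bookkeeping point is that one must consistently use that $\supp(\mu)\subseteq S^{\inv}_{\bar m}(\FC,M)$ and that $\mu$ being idempotent makes $(S,\ast)$ a semigroup (Proposition~\ref{rem: idempotent support 1}), so that Theorem~\ref{thm: minimal support vs stabilizer} and Lemma~\ref{lemma: one minimal} are available; none of this is hard, but it should be stated so the reader sees why all hypotheses of the cited results are met.

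Assembling: it is cleanest to prove the single display of implications
\[
(1)\land(2)\;\overset{\text{Lem.~\ref{lemma: diamond}(2)}}{\Longrightarrow}\;(3)\;\Longrightarrow\;(4)\;\Longrightarrow\;(3)\;\overset{\text{Lem.~\ref{lemma: diamond}(1),\ Cor.~\ref{cor:supp_cont_pi_ls}}}{\Longrightarrow}\;(1)\land(2),
\]
which gives at once (1)$\land$(2)$\iff$(3)$\iff$(4), as desired. I would write the proof in roughly this order: (a) note (3)$\Rightarrow$(2) by Corollary~\ref{cor:supp_cont_pi_ls} and hence $\supp(\mu)\subseteq\stab_r(\mu)$ by Theorem~\ref{thm: minimal support vs stabilizer}; (b) deduce (3)$\Rightarrow$(4) using Proposition~\ref{prop: left-right stab}, Fact~\ref{rem: better 6.3}, and \cite[Corollary 6.13]{GHK}; (c) note the trivial (4)$\Rightarrow$(3); (d) invoke Lemma~\ref{lemma: diamond}(2) for (1)$\land$(2)$\Rightarrow$(3) and Lemma~\ref{lemma: diamond}(1) together with Corollary~\ref{cor:supp_cont_pi_ls} for (3)$\Rightarrow$(1)$\land$(2).
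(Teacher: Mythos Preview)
Your proof is correct and follows essentially the same route as the paper: both arguments use Lemma~\ref{lemma: diamond} for the equivalence of (1)$\land$(2) with (3), the trivial containment for (4)$\Rightarrow$(3), and \cite[Corollary 6.13]{GHK} for (3)$\Rightarrow$(4). One small redundancy: in your step (3)$\Rightarrow$(4) you invoke Proposition~\ref{prop: left-right stab}, Fact~\ref{rem: better 6.3}, and Theorem~\ref{thm: minimal support vs stabilizer} to conclude that $\mu$ is left $G_{\pi,\FC}$-invariant, but this is immediate from the standing hypothesis $\stab_l(\mu)=G_{\pi,\FC}$, so the only nontrivial content there is the uniqueness from \cite[Corollary 6.13]{GHK}.
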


\begin{proof}
Notice that (4) contains (3), while (3) implies (4), by \cite[Corollary 6.13]{GHK}.
The equivalence in (2) holds by by Theorem~\ref{thm: minimal support vs stabilizer}.
To see that (3) implies (2) we combine Proposition~\ref{prop: left-right stab} with Theorem~\ref{thm: minimal support vs stabilizer}.
By Lemma~\ref{lemma: diamond}, we have that (3) implies (1),
and moreover (1)$\land$(2) implies (3).  Putting these implications together finishes the proof.
\end{proof}

\begin{theorem}\label{thm: weak conjecture}
Let $\mu \in \mathfrak{M}^{\df}_{\bar m}(\FC,M)$ be superfim over $M$
and let \eqref{eq:restricted_lascar_inv} hold.
We know from \cite[Lemma 2.26]{GHK}
that $\stab_l(\mu)=G_{\pi,\FC}$ for some partial type $\pi(\bar x,\bar y)\vdash\bar{x}\equiv_{\emptyset}\bar{y}$.
%Put $G_{\pi,\FC}=\stab(\mu)$.
Then the following are equivalent:
\begin{enumerate}
\item $\mu$ is an idempotent and $\supp(\mu)$ is left-simple.

\item $\mu$ is the unique (left) $G_{\pi,\FC}$-invariant measure in $\mathfrak{M}^{\Borel}_{\pi(\bar{x};\bar{m})}(\FC,M)$.
\end{enumerate}
\end{theorem}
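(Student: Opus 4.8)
The plan is to derive Theorem~\ref{thm: weak conjecture} as an almost immediate consequence of Lemma~\ref{lemma: 4 equivalent conditions}, using the hypothesis \eqref{eq:restricted_lascar_inv} to collapse the conjunction ``(1)$\land$(2)'' of that lemma to a single clause. First I would observe that (2) of the present theorem is literally condition (4) of Lemma~\ref{lemma: 4 equivalent conditions}: in both cases $\mu$ is asserted to be the unique (left) $G_{\pi,\FC}$-invariant measure in $\mathfrak{M}^{\Borel}_{\pi(\bar{x};\bar{m})}(\FC,M)$, and this in particular forces $\mu$ to lie in that space, hence to be concentrated on $[\pi(\bar{x};\bar{m})]$, hence idempotent by \cite[Corollary 6.7]{GHK} (or directly by condition (3) of the lemma plus Corollary~\ref{cor:supp_cont_pi_ls} for left-simplicity). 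So (2)$\Rightarrow$(1) follows once we know (4)$\Rightarrow$(3)$\land$(1)$\land$(2) in the lemma, together with Corollary~\ref{cor:supp_cont_pi_ls} which gives left-simplicity of $\supp(\mu)$ from $\supp(\mu)\subseteq[\pi(\bar{x};\bar{m})]$. Since the standing hypothesis here already includes \eqref{eq:restricted_lascar_inv}, condition (1) of the lemma is automatic, so we are in the regime where the lemma's ``(1)$\land$(2)'' reduces to just ``(2)''.

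For the forward direction (1)$\Rightarrow$(2): assume $\mu$ is idempotent with $\supp(\mu)$ left-simple. By Remark~\ref{rem:left_simple_min_ideal}, left-simplicity is equivalent to $\supp(\mu)$ having a unique minimal left ideal, i.e.\ condition (2) of Lemma~\ref{lemma: 4 equivalent conditions}. Combined with \eqref{eq:restricted_lascar_inv}, which is condition (1) of that lemma, we have ``(1)$\land$(2)'', so by Lemma~\ref{lemma: 4 equivalent conditions} we conclude condition (4), which is exactly statement (2) of the present theorem. (Note that idempotency of $\mu$ is part of the hypothesis of Lemma~\ref{lemma: 4 equivalent conditions}, and it is part of (1) here, so nothing is missing.)

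The backward direction (2)$\Rightarrow$(1) is then just the observation that statement (2) is condition (4) of the lemma, which by the lemma implies (3) $\supp(\mu)\subseteq[\pi(\bar{x};\bar{m})]$ and hence, via Corollary~\ref{cor:supp_cont_pi_ls}, that $\supp(\mu)$ is left-simple; idempotency of $\mu$ follows because $\mu\in\mathfrak{M}^{\Borel}_{\pi(\bar{x};\bar{m})}(\FC,M)$ is (left) $G_{\pi,\FC}$-invariant, hence $\mu\ast q=\mu$ for every $q\in S_{\pi(\bar{x}';\bar{m})}(\FC)$ by Fact~\ref{rem: better 6.3}, and $\supp(\mu)\subseteq[\pi(\bar{x};\bar{m})]$ then gives $\mu\ast\mu=\int(\mu\ast q)\,d\mu(q)=\mu$ via Remark~\ref{rem: old star}. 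I expect there is essentially no obstacle here beyond bookkeeping: the real content has been front-loaded into Lemma~\ref{lemma: 4 equivalent conditions} (which in turn rests on Lemma~\ref{lemma: diamond}, Theorem~\ref{thm: minimal support vs stabilizer}, and the uniqueness statement \cite[Corollary 6.13]{GHK} for superfim measures). The one point to state carefully is that the ambient space in statement (2) of the theorem is $\mathfrak{M}^{\Borel}_{\pi(\bar{x};\bar{m})}(\FC,M)$ rather than $\mathfrak{M}^{\df}_{\bar m}(\FC,M)$, so that ``$\mu$ is the unique $G_{\pi,\FC}$-invariant measure there'' is genuinely equivalent to condition (4) of the lemma and does not implicitly require any additional definability hypothesis on competing measures beyond Borel $M$-definability.
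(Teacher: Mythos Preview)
Your proof is correct and takes essentially the same approach as the paper: first establish that (2) forces $\mu$ to be idempotent (the paper cites \cite[Lemma 6.5 and Proposition 6.6]{GHK}, you cite \cite[Corollary 6.7]{GHK} and also rederive it directly via Fact~\ref{rem: better 6.3} and Remark~\ref{rem: old star}), and then both directions reduce to Lemma~\ref{lemma: 4 equivalent conditions}. One small expositional wrinkle: in your third paragraph you invoke Corollary~\ref{cor:supp_cont_pi_ls} before stating idempotency, but that corollary (via Theorem~\ref{thm: minimal support vs stabilizer}) presupposes $\mu\ast\mu=\mu$; since you already obtained idempotency in the first paragraph this is not a gap, but it would read more cleanly to record idempotency first and then appeal to the corollary (or simply note, as the paper implicitly does, that once idempotency is in hand the entire equivalence is Lemma~\ref{lemma: 4 equivalent conditions}).
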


\begin{proof}
    As was already mentioned, (2) implies that $\mu$ is idempotent by \cite[Lemma 6.5 and Proposition 6.6]{GHK}. 
    The conclusion follows easily by Lemma~\ref{lemma: 4 equivalent conditions}.
\end{proof}

There are 4 differences between the statement of Theorem~\ref{thm: weak conjecture} and Conjecture~\ref{conjecture: main conjecture}:
\begin{enumerate}
    \item
    \label{it:borel_definable}
    Considering the space $\mathfrak{M}^{\Borel}_{\pi(\bar{x};\bar{m})}(\FC,M)$ instead of $\mathfrak{M}^{\inv}_{\pi(\bar{x};\bar{m})}(\FC,M)$ in the second point.
    \\ This technical assumption is natural outside of the NIP context.
    
    \item
    \label{it:superfim}
    The measure $\mu$ is superfim over $M$ instead of being fim over $M$.
    \\ In case of the NIP theories, the class of superfim measures
    coincides with the class of fim measures.
    Under NTP2, for types, the class of superfim (i.e.\ super generically stable) types
    coincides with the class of fim (generically stable) types.
    Thus most of the special cases of Conjecture~\ref{conjecture: main conjecture} confirmed in \cite{CGK,GHK} (except for 
    the case of abelian type-definable group, cf.\ \cite[Theorem 3.45]{CGK}
    and the case of types in rosy theories \cite[Corollary 6.21]{GHK}) fall into the superfim case.

    \item
    \label{it:diamond_hyp}
    Property \eqref{eq:restricted_lascar_inv} holds.
    \\ This is a new assumption.
    However, by Lemma~\ref{lemma: diamond}(1), we see that every (idempotent) measure for which Conjecture~\ref{conjecture: main conjecture} holds (and every so-called \emph{generically transitive measure}) must satisfy \eqref{eq:restricted_lascar_inv}.

    \item
    \label{it:lsimple_support}
    In the first point, the measure $\mu$ is assumed to have left-simple support.
    \\ See the discussion at the end of the previous subsection, where we argue that left-simplicity of $\supp(\mu)$ is necessary to have $\supp(\mu)\subseteq[\pi(\bar{x};\bar{m})]$ which is the main difficulty in showing that (1)$\Rightarrow$(2) in Conjecture~\ref{conjecture: main conjecture}. 
    One of the main cases for which Conjecture~\ref{conjecture: main conjecture} is confirmed is the case of stable theories, and in stable theories left-simplicity of $\supp(\mu)$ holds (cf.\ Proposition 8.18 in \cite{GHK}).
\end{enumerate}

\begin{question}\label{question: diamond}
    Let $\mu \in \mathfrak{M}^{\inv}_{\bar m}(\FC,M)$ be fim over $M$
    [superfim over $M$], idempotent and let $\supp(\mu)$ be left-simple.
    Does \eqref{eq:restricted_lascar_inv} follow?
\end{question}
\noindent
Note that if the answer to Question~\ref{question: diamond} is positive, then in Theorem~\ref{thm: weak conjecture}, we can drop the \eqref{eq:restricted_lascar_inv} hypothesis.

Now, let us see how the main confirmed cases of Conjecture~\ref{conjecture: main conjecture} for measures can be recovered using Theorem~\ref{thm: weak conjecture}.
\begin{itemize}
%\eqref{it:diamond_hyp} \eqref{it:lsimple_support}
    \item
    \emph{Stable case}
    (cf.\ \cite[Theorem 5.8]{Artem_Kyle}, \cite[Proposition 3.55]{CGK} and \cite[Corollary 8.25]{GHK})
    \\
    In the stable case, all invariant measures over models are definable (in particular Borel definable), so \eqref{it:borel_definable} makes no difference.
    As mentioned earlier, superfim coincides with fim for stable theories (both for types and measures), so difference \eqref{it:superfim} vanishes.
    By Proposition 8.18 from \cite{GHK}, also difference \eqref{it:lsimple_support} vanishes. Property \eqref{eq:restricted_lascar_inv} is new,
    but it was obtained in the proof of \cite[Lemma 8.21]{GHK} (\eqref{eq:restricted_lascar_inv} corresponds to $\sigma\cdot \hat \mu|_M$ not forking over $M$ there), 
    where Newelski's Group Chunk Theorem for automorphism groups was applied, cf.\ \cite[Theorem 7.25]{GHK}.

    \item
    \emph{NIP+KP-invariant case}
    (cf.\  \cite[Theorem 4.11]{Artem_Kyle2}),  \cite[Corollary 6.32,
    Theorem 6.31]{GHK})
    \\ Here we consider Conjecture~\ref{conjecture: main conjecture} with the additional assumption that $\mu$ is KP-invariant ($\autf_\KP(\FC)$-invariant).
    Again, by NIP, $M$-invariant measure is Borel definable (corresponding to \eqref{it:borel_definable}) and superfim coincides with fim (cf.\ \eqref{it:superfim}).
    Then, if $\mu$ is KP-invariant, it is Lascar-invariant and \eqref{eq:restricted_lascar_inv} follows (by Remark~\ref{rem: diamond and Lascar}, i.e.\ \eqref{it:diamond_hyp}).
    Moreover, we will see later that KP-invariance implies that $\supp(\mu)$ is left-simple (see the proof of Corollary~\ref{cor: conj when f-gen in support} and Corollary~\ref{cor: description of idempotent measures with f-generics}(3)), which corresponds to \eqref{it:lsimple_support}.
\end{itemize}

Now, we will restate Theorem~\ref{thm: weak conjecture} separately for the case of types, where we can make several simplifications.

\begin{theorem}\label{thm: weak conjecture_types}
Let $p \in S^{\inv}_{\bar m}(\FC,M)$ be generically stable over $M$
and let \eqref{eq:restricted_lascar_inv} hold (for $\mu=p$).
We know that $\stab(p)=G_{\pi,\FC}$ for some partial type $\pi(\bar x,\bar y)\vdash\bar{x}\equiv_{\emptyset}\bar{y}$.
Then the following are equivalent:
\begin{enumerate}
\item $p$ is an idempotent.

\item $p$ is the unique (left) $G_{\pi,\FC}$-invariant type in $S^{\inv}_{\pi(\bar{x};\bar{m})}(\FC,M)$.
\end{enumerate}
\end{theorem}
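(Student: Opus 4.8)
The plan is to deduce this from Theorem~\ref{thm: weak conjecture} (the measure version) by specializing to the Dirac measure $\mu = \delta_p$, after checking that the hypotheses and conclusion of the two statements match up under this specialization. First I would recall that a type $p$ is generically stable over $M$ precisely when $p$ is fim over $M$ in the sense of Definition~\ref{def:fim} viewed as a (Dirac) measure; and by the remark after Definition~\ref{def:fim} (super-fim coincides with fim for types in NTP$_2$, but more to the point, for \emph{types} fim already means $p^{(n)}$ is fim for all $n$ essentially by definition of generic stability via indiscernibles), the type $p$ is automatically super-fim over $M$ as a measure. So the hypothesis ``$\mu = \delta_p$ superfim over $M$'' of Theorem~\ref{thm: weak conjecture} is satisfied, and property~\eqref{eq:restricted_lascar_inv} is assumed for $\mu = p$ by hypothesis. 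Note also $\stab_l(\delta_p) = \stab(p)$ by Corollary~\ref{rem: aut_is_*} (the left action by $\aut(\FC)$ on $\delta_p$ corresponds to the action on $p$), so the group $G_{\pi,\FC}$ is the same in both statements.

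Next I would observe that for types the left-simplicity hypothesis in Theorem~\ref{thm: weak conjecture}(1) is automatic and can be omitted. Indeed, $\supp(\delta_p) = \{p\}$ is a singleton, so $(\{p\},\ast)$ is trivially a (one-element) left-simple semigroup once we know $p \ast p = p$, i.e.\ once $p$ is idempotent. Thus in the type case, condition (1) of Theorem~\ref{thm: weak conjecture} (``$\mu$ idempotent and $\supp(\mu)$ left-simple'') reduces exactly to ``$p$ is idempotent''. Conversely, if $p \ast p \ne p$ then $\delta_p$ is not idempotent and neither side of the equivalence in Theorem~\ref{thm: weak conjecture_types} can hold (for (2): uniqueness of a $G_{\pi,\FC}$-invariant type concentrated on $\pi(\bar{x};\bar{m})$ forces that type to be $p$ by Proposition~\ref{prop: left-right stab}-type reasoning, and such a $p$ would have to be idempotent by \cite[Corollary 6.7]{GHK}). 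So (1) of Theorem~\ref{thm: weak conjecture_types} matches (1) of Theorem~\ref{thm: weak conjecture} for $\mu = \delta_p$.

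Then I would match the right-hand sides. Condition (2) of Theorem~\ref{thm: weak conjecture} says $\delta_p$ is the unique $G_{\pi,\FC}$-invariant measure in $\mathfrak{M}^{\Borel}_{\pi(\bar{x};\bar{m})}(\FC,M)$; I claim this is equivalent to condition (2) of Theorem~\ref{thm: weak conjecture_types}, namely that $p$ is the unique $G_{\pi,\FC}$-invariant \emph{type} in $S^{\inv}_{\pi(\bar{x};\bar{m})}(\FC,M)$. The implication (measure uniqueness) $\Rightarrow$ (type uniqueness) is immediate since any invariant type concentrated on $\pi$ gives a Dirac measure with the same properties. For the converse, the key point is that once we are in the situation where (1) holds (which we may assume, since (2) $\Rightarrow$ (1) is the easy direction handled by \cite[Corollary 6.7]{GHK} / the argument in the proof of Theorem~\ref{thm: weak conjecture}), Lemma~\ref{lemma: 4 equivalent conditions} gives $\supp(\delta_p) = \{p\} \subseteq [\pi(\bar{x};\bar{m})]$ and that $\delta_p$ is the unique $G_{\pi,\FC}$-invariant \emph{measure} concentrated on $\pi$ — here one uses \cite[Corollary 6.13]{GHK} (uniqueness of invariant measures for superfim measures), exactly as in the proof of Theorem~\ref{thm: weak conjecture}. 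So type-uniqueness does upgrade to measure-uniqueness in the presence of (1).

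**Main obstacle.** The genuinely delicate point is not any of the matching bookkeeping above but rather making sure that the uniqueness statement \cite[Corollary 6.13]{GHK} is being invoked legitimately: it requires a superfim (equivalently, in NIP, fim) measure, and we are feeding it the Dirac measure $\delta_p$ with $p$ generically stable. One must confirm that generic stability of the \emph{type} $p$ indeed yields that $\delta_p$ is superfim as a \emph{measure} — i.e.\ that $\delta_p^{(n)} = \delta_{p^{(n)}}$ is fim for every $n$, which follows because $p^{(n)}$ is again generically stable (the Morley power of a generically stable type is generically stable, as its realizations form a totally indiscernible set over $M$). Granting that, the whole theorem is a transcription of Theorem~\ref{thm: weak conjecture} with $\supp$ a singleton. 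I would therefore present the proof as: reduce to $\mu = \delta_p$, note super-fim-ness and that left-simplicity is vacuous for singletons, and quote Theorem~\ref{thm: weak conjecture} (or directly Lemma~\ref{lemma: 4 equivalent conditions}) to conclude.
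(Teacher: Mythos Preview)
Your reduction to Theorem~\ref{thm: weak conjecture} hinges on the claim that a generically stable type $p$ is automatically superfim, i.e.\ that $p^{(n)}$ is generically stable for every $n$. Your justification (``the Morley power of a generically stable type is generically stable, as its realizations form a totally indiscernible set over $M$'') is not a proof: total indiscernibility of Morley sequences is only part of generic stability, and even granting that, passing from $p$ to $p^{(n)}$ is not ``essentially by definition''. In fact the equality fim $=$ superfim for types is exactly the content of \cite{ConGanHan23}, and it is only established there under NTP$_2$ --- the paper says this explicitly right after Definition~\ref{def:fim}. In an arbitrary theory this is not known, so your reduction has a genuine gap precisely at the point you flagged as the main obstacle.

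The paper's proof avoids this issue entirely. Rather than invoking Theorem~\ref{thm: weak conjecture} (which needs superfim), it reruns the proof of Lemma~\ref{lemma: diamond}(2) directly for the type $p$: the only place superfim was used there was the appeal to \cite[Lemma 6.12]{GHK} (to conclude $\mu|_{\FC'}=\sigma\cdot\mu|_{\FC'}$ from agreement of the restrictions to $M$), and for generically stable \emph{types} this step is supplied instead by \cite[Proposition 2.1(iv)]{PiTa}. The uniqueness in (2) is then obtained from \cite[Proposition 6.10]{GHK} (uniqueness for generically stable types), not from \cite[Corollary 6.13]{GHK} (uniqueness for superfim measures) as you propose. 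So the paper's argument is genuinely different from yours: it does not factor through Theorem~\ref{thm: weak conjecture} and does not need $p^{(n)}$ to be generically stable for $n\geq 2$.
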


\begin{proof}
    Again, we have that (2)$\Rightarrow$(1) follows
    by \cite[Lemma 6.5]{GHK} and \cite[Proposition 6.6]{GHK}.

    Now, let us assume (1). Obviously, $\supp(\delta_p)$ is left-simple.
    We would like to use Lemma~\ref{lemma: diamond}, but for that $p$ needs to be superfim. However, we can follow the proof of Lemma~\ref{lemma: diamond}(2)
    to notice that in the case of types we do not need to assume that $p$ is superfim.
    Indeed, the only place where we need it is the use of \cite[Lemma 6.12]{GHK}
    and instead of that we can use \cite[Proposition 2.1(iv)]{PiTa} which gives the same conclusion for generically stable types.
    The uniqueness in (2) follows by \cite[Proposition 6.10]{GHK}.
\end{proof}

\cite[Proposition 2.37]{CGK} and in \cite[Corollary 6.21]{GHK} show that Conjecture~\ref{conjecture: main conjecture} holds for types in rosy theories.
By Lemma~\ref{lemma: diamond}, we have that every idempotent fim measure in a rosy theory satisfies \eqref{eq:restricted_lascar_inv}. And again, showing property \eqref{eq:restricted_lascar_inv} was
crucial for the proofs of \cite[Proposition 2.37]{CGK} (see also Proposition 2.30 there) and  in \cite[Corollary 6.21]{GHK}.

\begin{question}\label{question: diamond2}
    Let $p\in S^{\inv}_{\bar m}(\FC,M)$ be idempotent and fim over $M$.
    Does \eqref{eq:restricted_lascar_inv} follow?
\end{question}

We see that
to show Conjecture~\ref{conjecture: main conjecture}
we need to study Questions~\ref{question: diamond} and~\ref{question: diamond2}.
Removing the assumption on left-simplicity in the first point of Theorem~\ref{thm: weak conjecture} might be possible due to the stability hidden behind the notion of a fim/superfim measure/type:

\begin{question}\label{question: left-simple}
    Let $\mu \in \mathfrak{M}^{\inv}_{\bar m}(\FC,M)$ be fim over $M$
    [superfim over $M$] and idempotent.
    Is $\supp(\mu)$ left-simple?
\end{question}

\section{F-generics}\label{sec: f-generics and idempotent measures}
In this section, we define the notion of f-generics.
After that, we observe good properties of the $\ast$-product when f-generics are involved. In NIP, this leads to results on the minimal ideals of the semigroup $(S^{\inv}_{\bar{n}}(\FC,M),\ast)$ and better understanding of semigroup structure of the support of an idempotent Keisler measures.

\subsection{f-generics and minimal ideals}\label{subsec: f-generics}
We start with introducing the notion of f-generics and showing that they form a minimal left ideal (provided they exist in a NIP theory $T$).

Recall that $M\preceq N\preceq\FC$, $\bar{m}\in M^{\bar{x}'}$ enumerates $M$,
$\bar{m}\subseteq\bar{n}\in N^{\bar{x}}$ enumerates $N$, and $\bar{x}'\subseteq\bar{x}$. Moreover, $\FC'\preceq\FC''$ are a larger and larger monster models containing $\FC$.
We define the following map
\[\rho\colon S_{\bar{n}}(\FC)\to\gal_{\Las}(T)\]
\[\rho(\tp(\sigma(\bar{n})/\FC)):=\sigma^{-1}\autf_{\Las}(\FC')\]
and its variant, denoted $\rho'\colon S_{\bar{n}}(\FC')\to\gal_{\Las}(T)$, with $\FC'\preceq\FC''$ in the place of $\FC\preceq\FC'$. 
Note that $\rho\colon S_{\bar{n}}(\FC)\to\gal_{\KP}(T)$ for $T$ being $G$-compact.

Moreover, recall that $\stab_l(\mu):=\{\sigma\in\aut(\FC):\sigma_*\mu=\mu\}$, where $\mu\in\mathfrak{M}(\FC)$. In particular, $\stab_l(p)=\{\sigma\in\aut(\FC):\sigma\cdot p=p\}$ for $p\in S(\FC)$.

\begin{proposition}\label{prop: stab in Lascar}
    If $p$ is a global type extending the type of a model (e.g.\ $p(\bar{x})\in S_{\bar{n}}(\FC)$) then $\stab_l(p)\leqslant\autf_{\Las}(\FC)\leqslant\autf_{\KP}(\FC)$.
\end{proposition}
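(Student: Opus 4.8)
The plan is to show first that $\stab_l(p) \leqslant \autf_{\Las}(\FC)$, since the inclusion $\autf_{\Las}(\FC) \leqslant \autf_{\KP}(\FC)$ is standard (every Lascar strong automorphism is a KP strong automorphism). So fix $\sigma \in \aut(\FC)$ with $\sigma \cdot p = p$; I want to conclude $\sigma \in \autf_{\Las}(\FC)$, equivalently that $\sigma$ fixes every Lascar strong type over $\emptyset$ (over $\FC$ being trivial since $\sigma$ fixes $\FC$ pointwise as an automorphism of $\FC$ — wait, more precisely $\sigma\in\aut(\FC)$ just permutes $\FC$; the relevant statement is $\sigma\in\autf_{\Las}(\FC)$ meaning $\sigma$ is a product of automorphisms each fixing some small submodel pointwise). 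The key tool will be that $p$ extends the type of a model: write $p = \tp(\bar{n}/\FC)$ where (after moving by an automorphism of a bigger monster $\FC'$) $\bar{n}$ realizes $p$ and $\bar{n}$ enumerates a small model $N \succeq$ some fixed small $M \preceq \FC$ with $\sigma$... — actually the cleanest approach: realize $p$ in $\FC'$ by a model $N' \succeq \FC$, and use that any $\sigma \in \stab_l(p)$ extends to $\FC'$ fixing such an $N'$.

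Concretely, here are the steps I would carry out. \textbf{Step 1.} Since $p \in S_{\bar n}(\FC)$ extends $\tp(\bar n / \emptyset)$ the type of a model $N_0 \models T$, realize $p$ inside the bigger monster $\FC'$ by a small model $N \preceq \FC'$ with $N \equiv \FC$ appropriately; that is, pick $\sigma_0 \in \aut(\FC')$ with $p = \tp(\sigma_0(\bar n)/\FC)$, and let $N := \sigma_0(N_0)$, so $N$ is a model containing a realization $\sigma_0(\bar n) \models p$. \textbf{Step 2.} Given $\sigma \in \stab_l(p)$, lift $\sigma$ to $\tilde\sigma \in \aut(\FC')$; then $\tilde\sigma(\sigma_0(\bar n)) \models \sigma \cdot p = p$, so both $\sigma_0(\bar n)$ and $\tilde\sigma(\sigma_0(\bar n))$ realize $p$ over $\FC$, hence there is $\tau \in \aut(\FC'/\FC)$ with $\tau\tilde\sigma(\sigma_0(\bar n)) = \sigma_0(\bar n)$. \textbf{Step 3.} Now $\tau\tilde\sigma$ fixes the model $N = \sigma_0(N_0)$ pointwise (since it fixes its enumeration $\sigma_0(\bar n)$), so $\tau\tilde\sigma \in \autf_{\Las}(\FC')$; restricting back down and noting $\tau \in \aut(\FC'/\FC) \leqslant \autf_{\Las}(\FC')$ (an automorphism fixing a model — namely $\FC$ — pointwise is a Lascar strong automorphism), we get $\tilde\sigma = \tau^{-1}(\tau\tilde\sigma) \in \autf_{\Las}(\FC')$, and hence $\sigma = \tilde\sigma|_{\FC} \in \autf_{\Las}(\FC)$ (using that restriction of Lascar strong automorphisms of $\FC'$ fixing $\FC$ setwise gives Lascar strong automorphisms of $\FC$ — this is the compatibility of $\gal_{\Las}$ with monster model extensions, which is the same compatibility implicitly used to define $\rho'$).

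\textbf{Step 4.} Finally invoke the standard inclusion $\autf_{\Las}(\FC) \leqslant \autf_{\KP}(\FC)$ to conclude. The main obstacle I anticipate is Step 3's last clause: making precise that "Lascar strong automorphism of the bigger monster that fixes $\FC$ setwise restricts to a Lascar strong automorphism of $\FC$" — this is the well-definedness of the canonical map $\gal_{\Las}(T) \to \gal_{\Las}(T)$ computed in $\FC$ versus $\FC'$ (which is an isomorphism, $\gal_{\Las}(T)$ being independent of the monster). This is folklore but deserves a citation or a one-line argument via Lascar distance $d(\bar a, \bar b) \leqslant n$ being witnessed by models which can be taken small, hence inside $\FC$. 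Everything else is a direct unwinding of definitions.
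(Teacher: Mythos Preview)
Your proof is correct. The paper's proof is a two-line abstract version of exactly the same idea: it invokes the natural $\Aut(\FC)$-equivariant surjection $S_{\bar n}(\FC)\to\gal_{\Las}(T)$ (essentially the map $\rho$ defined just below the proposition) and observes that the stabiliser of any point in $\gal_{\Las}(T)=\Aut(\FC)/\autf_{\Las}(\FC)$ is $\autf_{\Las}(\FC)$, which immediately gives $\stab_l(p)\leqslant\autf_{\Las}(\FC)$. Your Steps~1--3 are precisely an explicit unpacking of the well-definedness and equivariance of that surjection: realizing $p$ by a model in $\FC'$, using $\tau\in\Aut(\FC'/\FC)$ to witness that two realizations of $p$ give the same coset, and then reading off that $\tilde\sigma\in\autf_{\Las}(\FC')$. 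The only content beyond the paper's one-liner is the compatibility of $\autf_{\Las}$ across monster models that you flag in Step~3, which is indeed folklore (and is implicitly used anyway when one says $\gal_{\Las}(T)$ does not depend on the monster, as the paper does when defining $\rho$ and $\rho'$). So the approaches are essentially the same, yours just spells out what the equivariance of $\rho$ means concretely.
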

\begin{proof}
The natural surjection from types to the Galois group is $\Aut(\FC)$-equivariant, and the stabiliser of any point in $\gal(T)$ is $\autf_L(\FC)$.
\end{proof}

\begin{remark}\label{rem: 4 properties}
    Let $(\bar{x})\in S_{\bar{n}}(\FC)$. Assuming NIP, the following are equivalent by  \cite[Fact 2.22]{HruKruPi} (based on \cite[Proposition 2.11]{Anand_Udi2011}):
    \begin{enumerate}
        \item $p$ does not fork over $\emptyset$,
    
        \item the orbit $\aut(\FC)\cdot p$ is bounded,
        
        \item $p$ is Lascar-invariant (i.e.\ $\autf_{L}(\FC)\leqslant\stab_l(p)$ \\ and so
        $\autf_{L}(\FC)=\stab_l(p)$ by Proposition~\ref{prop: stab in Lascar}),
        
        \item $p$ is KP-invariant (i.e.\ $\autf_{\KP}(\FC)\leqslant\stab_l(p)$ \\ and so
        $\autf_{\KP}(\FC)=\autf_{L}(\FC)=\stab_l(p)$ by Proposition~\ref{prop: stab in Lascar}).
    \end{enumerate}
    Without NIP, we have that (4)$\Rightarrow$(3)$\Leftrightarrow$(2)$\Rightarrow$(1). 
    Indeed, we have that (2)$\Leftrightarrow$(3) by \cite[Fact 4.4]{HruKruPi}, (4)$\Rightarrow$(3) is trivial, and (2)$\Rightarrow$(1) is standard.
    Note also that trivially (3)$\Leftrightarrow$(4) in $G$-compact theories, in particular (3)$\Leftrightarrow$(4) in amenable theories by the main result of \cite{HruKruPi} (i.e.\ amenable theories are $G$-compact).
\end{remark}

\noindent
One could try to use Proposition 4.5 from \cite{HruKruPi} to show that (2)$\Rightarrow$(4) in Remark~\ref{rem: 4 properties}, but for that we should know that $\stab_l(p)$ is relatively type-definable subgroup of $\aut(\FC)$ (which is the case for definable types, cf.\ Lemma 2.26 in \cite{GHK}).

\begin{definition}
    \label{dfn:f-gen}
    We say that $p(\bar{x})\in S^{\inv}_{\bar{n}}(\FC,M)$
    is (left) f-generic if its ($\aut(\FC)$-)orbit is bounded, equivalently, its left stabiliser is equal to $\autf_L(\FC)$.
    Let $\fGen\subseteq S^{\inv}_{\bar{n}}(\FC,M)$ denote the set of all f-generic types in $S^{\inv}_{\bar{n}}(\FC,M)$.
\end{definition}

\begin{proposition}
    (NIP) A theory $T$ is amenable if and only if f-generics (in the sense of Definition~\ref{dfn:f-gen}) exist.
\end{proposition}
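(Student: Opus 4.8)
\noindent\textit{Proof plan.}\ The plan is to use that $T$ is amenable exactly when $\aut(\FC)$ carries an invariant Keisler measure on $S_{\bar n}(\FC)$ (the passage between different models is harmless: invariant measures restrict along the forgetful maps between model enumerations, and an f-generic type over one model yields, by non-forking extension, f-generic types over larger ones). By Remark~\ref{rem: 4 properties}, under NIP the f-generic types in $S^{\inv}_{\bar n}(\FC,M)$ are precisely the global types in $S_{\bar n}(\FC)$ that do not fork over $\emptyset$ (and such types are automatically $M$-invariant, indeed Lascar-invariant). So the task reduces to relating invariant measures on $S_{\bar n}(\FC)$ with non-forking global types.

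For the implication from amenability, I would start with an $\aut(\FC)$-invariant Keisler measure $\mu$ on $S_{\bar n}(\FC)$. A short counting argument shows $\mu$ is null on every formula dividing over $\emptyset$: if $(\varphi(\bar x;\bar b_i))_{i<\omega}$ is a $k$-inconsistent family of $\emptyset$-conjugates of $\varphi(\bar x;\bar b)$, invariance forces $\mu(\varphi(\bar x;\bar b_i))=\mu(\varphi(\bar x;\bar b))$ for all $i$, while $\sum_i\mu(\varphi(\bar x;\bar b_i))\leqslant k-1$. Passing to finite disjunctions, $\mu$ is null on every formula forking over $\emptyset$, so no type in $\supp(\mu)$ contains a formula forking over $\emptyset$; hence every $p\in\supp(\mu)$ fails to fork over $\emptyset$, i.e.\ is f-generic by Remark~\ref{rem: 4 properties}. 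Since $\supp(\mu)\neq\emptyset$, f-generics exist.

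For the converse, fix $p\in\fGen$. By Remark~\ref{rem: 4 properties} and Proposition~\ref{prop: stab in Lascar} we get $\stab_l(p)=\autf_L(\FC)=\autf_{\KP}(\FC)$; in particular $T$ is $G$-compact, and the orbit map identifies $O:=\aut(\FC)\cdot p\subseteq S_{\bar n}(\FC)$ with $\gal_{\KP}(T)$, $\aut(\FC)$-equivariantly for a translation action on $\gal_{\KP}(T)$ -- this is the injectivity of the continuous map $\rho$ on $O$. As $\autf_{\KP}(\FC)\trianglelefteq\aut(\FC)$ equals $\stab_l(p)$, it acts trivially on $O$, and hence, by continuity of each $\sigma\cdot{-}$, on the compact set $X:=\overline{O}$; so the $\aut(\FC)$-action on $X$ factors through the compact Hausdorff group $\gal_{\KP}(T)$, onto which $\rho$ maps $X$. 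I would then transport the normalised (bi-invariant) Haar measure of $\gal_{\KP}(T)$ back to an $\aut(\FC)$-invariant Borel probability measure on $X\subseteq S_{\bar n}(\FC)$ and extend it to a Keisler measure, witnessing amenability of $T$; this mirrors the construction from an f-generic type in the definably amenable group case, \cite[Section 3]{GR25}.

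I expect the last step to be the main obstacle: $\rho|_X\colon X\to\gal_{\KP}(T)$ is a continuous surjection of compacta but need not be injective outside $O$, so Haar measure cannot simply be pulled back along an inverse. The remedy is to pick a probability measure on $X$ projecting to Haar measure and average it over the induced $\gal_{\KP}(T)$-action, which requires enough (joint) measurability of that action; equivalently, one works on the Ellis semigroup of $(S^{\inv}_{\bar n}(\FC,M),\ast)$ and uses that the Ellis group of the minimal left ideal is, via Theorem~\ref{thm: fGen and injectivity on Ellis groups}, a compact group mapping onto $\gal_{\KP}(T)$. Once this is in place, the remaining verifications -- finite additivity, invariance, and the reductions between model enumerations -- are routine.
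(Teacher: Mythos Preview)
Your argument for the forward direction (amenable $\Rightarrow$ f-generics exist) is correct and is exactly the ``standard argument'' the paper alludes to elsewhere (Proposition~\ref{prop: support of f-generics}).

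The backward direction has a genuine gap. Your appeal to Theorem~\ref{thm: fGen and injectivity on Ellis groups} is circular: that theorem assumes $T$ is amenable and NIP, which is precisely what you are trying to prove. Your other proposed workaround (pick a measure on $X=\overline{\aut(\FC)\cdot p}$ and average over the induced $\gal_\KP(T)$-action) would require joint measurability of the map $\gal_\KP(T)\times X\to X$, and you do not establish this; the action of $\aut(\FC)$ on $S_{\bar n}(\FC)$ is only by individual homeomorphisms, and passing to the quotient does not automatically give a jointly continuous or even Borel action on the closure.

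The paper's proof simply cites \cite[Fact~2.23]{HruKruPi}, where the relevant construction is carried out directly and avoids your detour through $X$ entirely. Given an f-generic $p$ (so $\stab_l(p)=\autf_\KP(\FC)$ under NIP), one defines a Keisler measure on formulas by
\[
\mu(\theta(\bar x;\bar b)) \;:=\; h_\FC\bigl(\{\sigma\,\autf_\KP(\FC)\in\gal_\KP(T):\theta(\bar x;\bar b)\in\sigma\cdot p\}\bigr),
\]
where $h_\FC$ is the normalised Haar measure on $\gal_\KP(T)$. The set on the right is well defined because $p$ is $\autf_\KP(\FC)$-invariant, and it is Borel (in fact constructible) in the logic topology on $\gal_\KP(T)$---this is the content of the discussion after \cite[Fact~2.23]{HruKruPi}, also invoked later in the proof of Lemma~\ref{lemma: rho is Borel}. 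Finite additivity and $\aut(\FC)$-invariance are then immediate from the properties of Haar measure. This is morally what you are aiming for, but working at the level of formulas rather than trying to produce a Borel measure on $X$ sidesteps the injectivity and measurability problems you identified.
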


\begin{proof}
Simply combine Remark~\ref{rem: 4 properties} (or \cite[Fact 2.22]{HruKruPi})
and \cite[Fact 2.23]{HruKruPi}.
\end{proof}

\begin{proposition}\label{prop: support of f-generics}
    (NIP)
    If $\mu\in\mathfrak{M}_{\bar{n}}(\FC)$
    is an $\Aut(\FC)$-invariant measure, then every type in the support of $\mu$ is f-generic.
\end{proposition}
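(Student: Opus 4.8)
The plan is to show that no type in $\supp(\mu)$ forks over $\emptyset$, and then read off f-genericity from Remark~\ref{rem: 4 properties} together with Proposition~\ref{prop: stab in Lascar}. First I would note that, being $\Aut(\FC)$-invariant, $\mu$ is in particular invariant under $\Aut(\FC/A)$ for every small $A$, hence $\emptyset$-invariant and $M$-invariant; this is what will let the notion of f-genericity (defined only for types in $S^{\inv}_{\bar{n}}(\FC,M)$, Definition~\ref{dfn:f-gen}) apply once we know more about the support.

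The core step is the claim that every formula $\psi(\bar{x};\bar{c})$ (with $\bar{c}$ from $\FC$) dividing over $\emptyset$ satisfies $\mu\big([\psi(\bar{x};\bar{c})]\big)=0$. To prove it, fix $k<\omega$ and a sequence $(\bar{c}_i)_{i<\omega}$ with $\bar{c}_i\equiv_{\emptyset}\bar{c}$ for all $i$ and $\{\psi(\bar{x};\bar{c}_i):i<\omega\}$ $k$-inconsistent, witnessing dividing. For each $i$ choose $\sigma_i\in\Aut(\FC)$ with $\sigma_i(\bar{c})=\bar{c}_i$; then $[\psi(\bar{x};\bar{c}_i)]=\sigma_i\cdot[\psi(\bar{x};\bar{c})]$, so $\Aut(\FC)$-invariance of $\mu$ gives $\mu\big([\psi(\bar{x};\bar{c}_i)]\big)=\mu\big([\psi(\bar{x};\bar{c})]\big)=:\delta$ for every $i$. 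On the other hand $k$-inconsistency means that for every $N$ the inequality $\sum_{i<N}\mathbf{1}_{[\psi(\bar{x};\bar{c}_i)]}\leqslant k-1$ holds pointwise on $S_{\bar{n}}(\FC)$; integrating against $\mu$ yields $N\delta\leqslant k-1$, and letting $N\to\infty$ forces $\delta=0$. Since a formula forking over $\emptyset$ implies a finite disjunction of formulas each dividing over $\emptyset$, finite subadditivity of $\mu$ extends this to $\mu\big([\psi(\bar{x};\bar{c})]\big)=0$ for every $\psi(\bar{x};\bar{c})$ forking over $\emptyset$.

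Now I would conclude as follows. Let $p\in\supp(\mu)$ and $\varphi(\bar{x};\bar{b})\in p$. Then $[\varphi(\bar{x};\bar{b})]$ is a clopen neighbourhood of $p$, so $\mu\big([\varphi(\bar{x};\bar{b})]\big)>0$, and by the previous paragraph $\varphi(\bar{x};\bar{b})$ does not fork over $\emptyset$. Hence $p$ does not fork over $\emptyset$. By NIP and Remark~\ref{rem: 4 properties}, $p$ is then Lascar-invariant, i.e.\ $\autf_L(\FC)\leqslant\stab_l(p)$, and by Proposition~\ref{prop: stab in Lascar} this gives $\stab_l(p)=\autf_L(\FC)$; moreover the same Remark gives that the $\Aut(\FC)$-orbit of $p$ is bounded. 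Finally, since every automorphism fixing a small model pointwise is Lascar-strong over $\emptyset$, we have $\Aut(\FC/M)\subseteq\autf_L(\FC)\leqslant\stab_l(p)$, so $p$ is $M$-invariant, i.e.\ $p\in S^{\inv}_{\bar{n}}(\FC,M)$. Together with the bounded orbit this says precisely that $p$ is f-generic in the sense of Definition~\ref{dfn:f-gen}.

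There is no genuine obstacle here; the argument is the measure-theoretic counterpart of the classical fact that invariant measures for definable group actions are supported on generics, and NIP is used only at the very last step to turn "non-forking over $\emptyset$'' into "bounded orbit''. The only points deserving care rather than real difficulty are: that $\mu$, as a regular Borel probability measure on the Boolean (hence zero-dimensional) space $S_{\bar{n}}(\FC)$, assigns positive mass to every clopen neighbourhood of a point of its support; and the passage from dividing to forking formulas, which is the finite-subadditivity estimate indicated above.
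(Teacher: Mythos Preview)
Your proposal is correct and takes essentially the same approach as the paper: the paper's proof simply says that $p$ does not fork over $\emptyset$ ``by a standard argument'' and then invokes Remark~\ref{rem: 4 properties}, while you spell out that standard argument (measure zero for dividing formulas, then forking formulas) in full and also make explicit the verification that $p\in S^{\inv}_{\bar n}(\FC,M)$ so that Definition~\ref{dfn:f-gen} applies.
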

\begin{proof}
    Let $p(\bar{x})$ be a type in the support of $\mu$. Then $p$ does not fork over $\emptyset$ by a standard argument, and so by Remark~\ref{rem: 4 properties} it is f-generic.
\end{proof}

\begin{lemma}\label{lemma: f-generic extension}
    %(NIP)
    Let $p(\bar{x})\in S^{\inv}_{\bar{n}}(\FC,M)$ be an f-generic.
    Then $p|_{\FC'}$ is an f-generic, i.e.\ $\stab_l(p|_{\FC'})=\autf_{\Las}(\FC')$.
    Consequently, $\rho'$ is injective on the $\aut(\FC')$-orbit of $p|_{\FC'}$.
\end{lemma}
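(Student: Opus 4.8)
The plan is to show that $p|_{\FC'}$ has bounded $\aut(\FC')$-orbit, and then invoke the characterization in Remark~\ref{rem: 4 properties} (applied over $\FC'$) together with Proposition~\ref{prop: stab in Lascar} to conclude that $\stab_l(p|_{\FC'}) = \autf_{\Las}(\FC')$. The first step is to recall that $p|_{\FC'}$ denotes the unique $M$-invariant extension of $p$ to $\FC'$ (this exists because $p$ is $M$-invariant, hence by Remark~\ref{rem: 4 properties} Lascar-invariant, and Lascar-invariant types over $\FC$ extend canonically over $\FC'$ — or one can simply take the $M$-invariant extension directly). I would then argue that this extension is again Lascar-invariant over $\emptyset$: a standard argument shows $M$-invariant types do not fork over $\emptyset$ (since $p$ is f-generic over $\FC$, it does not fork over $\emptyset$, and non-forking over $\emptyset$ is preserved under the canonical invariant extension — an automorphism of $\FC'$ over $\emptyset$ restricts to one of $\FC$ over $\emptyset$, so boundedness of the orbit is inherited). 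Concretely, if $\sigma \in \aut(\FC')$, then $\sigma(p|_{\FC'})$ is again an $M'$-invariant type for $M' = \sigma[M]$, and these are determined over $\FC'$ by their restriction to $\FC$; since $p$ has bounded orbit over $\FC$ (say at most $2^{|T|+|M|}$ conjugates), $p|_{\FC'}$ has boundedly many conjugates over $\FC'$ as well.

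Once boundedness of the orbit of $p|_{\FC'}$ is established, Remark~\ref{rem: 4 properties} applied with $\FC'$ in place of $\FC$ gives directly that $p|_{\FC'}$ is Lascar-invariant, i.e.\ $\autf_{\Las}(\FC') \leqslant \stab_l(p|_{\FC'})$, and the reverse inclusion is Proposition~\ref{prop: stab in Lascar}; hence $\stab_l(p|_{\FC'}) = \autf_{\Las}(\FC')$, which is exactly the assertion that $p|_{\FC'}$ is f-generic.

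For the final sentence about $\rho'$: the map $\rho'\colon S_{\bar{n}}(\FC') \to \gal_{\Las}(T)$ sends $\tp(\sigma(\bar{n})/\FC') \mapsto \sigma^{-1}\autf_{\Las}(\FC')$ and is $\aut(\FC')$-equivariant. Its restriction to the orbit $\aut(\FC') \cdot (p|_{\FC'})$ is injective precisely when the stabilizer of $p|_{\FC'}$ equals the stabilizer of the corresponding coset in $\gal_{\Las}(T)$, which is $\autf_{\Las}(\FC')$; this is exactly what we just proved. More explicitly, if $\sigma_1, \sigma_2 \in \aut(\FC')$ with $\sigma_1 \cdot (p|_{\FC'}) = \sigma_2 \cdot (p|_{\FC'})$, then $\sigma_2^{-1}\sigma_1 \in \stab_l(p|_{\FC'}) = \autf_{\Las}(\FC')$, so $\sigma_1^{-1}\autf_{\Las}(\FC') = \sigma_2^{-1}\autf_{\Las}(\FC')$, i.e.\ $\rho'$ agrees on the two points.

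I expect the only real subtlety is the clean bookkeeping for the extension $p \mapsto p|_{\FC'}$ — making sure that "$M$-invariant extension" is unambiguous and that non-forking/boundedness genuinely transfers upward along it; this is routine but is the step that actually carries content, and everything afterward is a formal consequence of results already stated in the excerpt (Remark~\ref{rem: 4 properties} and Proposition~\ref{prop: stab in Lascar}).
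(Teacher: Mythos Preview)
Your overall shape is right, but two concrete steps are broken.

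First, the bounded-orbit argument. You claim that an automorphism of $\FC'$ over $\emptyset$ restricts to one of $\FC$, and that a $\sigma[M]$-invariant type over $\FC'$ is determined by its restriction to $\FC$. Both are false: $\sigma\in\aut(\FC')$ need not fix $\FC$ setwise, and $\sigma[M]$ need not lie inside $\FC$, so neither the restriction map to $\aut(\FC)$ nor the ``determined by restriction to $\FC$'' step is available. The paper bypasses this entirely by proving Lascar-invariance of $p|_{\FC'}$ directly at the level of formulas: given $\bar b_1\equiv_{\Las}\bar b_2$ in $\FC'$, choose $\bar c_j\in\FC$ with $\bar c_j\equiv_M\bar b_j$; then $\bar c_1\equiv_{\Las}\bar b_1\equiv_{\Las}\bar b_2\equiv_{\Las}\bar c_2$, and $M$-invariance of $p|_{\FC'}$ together with Lascar-invariance of $p$ over $\FC$ gives
\[
\varphi(\bar x,\bar b_1)\in p|_{\FC'}\Leftrightarrow\varphi(\bar x,\bar c_1)\in p\Leftrightarrow\varphi(\bar x,\bar c_2)\in p\Leftrightarrow\varphi(\bar x,\bar b_2)\in p|_{\FC'}.
\]
This is the ``routine bookkeeping'' you gesture at, but it is the whole argument and does not go through bounded orbits at all.

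Second, in your injectivity paragraph the high-level sentence (injectivity on the orbit holds exactly when $\stab_l(p|_{\FC'})=\autf_{\Las}(\FC')$) is correct, but your ``More explicitly'' verifies the wrong direction: you start from $\sigma_1\cdot(p|_{\FC'})=\sigma_2\cdot(p|_{\FC'})$ and deduce that $\rho'$ agrees, which is a tautology. For injectivity you must start from $\rho'(\sigma_1\cdot p|_{\FC'})=\rho'(\sigma_2\cdot p|_{\FC'})$, deduce (using normality of $\autf_{\Las}(\FC')$) that $\sigma_2^{-1}\sigma_1\in\autf_{\Las}(\FC')=\stab_l(p|_{\FC'})$, and conclude $\sigma_1\cdot p|_{\FC'}=\sigma_2\cdot p|_{\FC'}$; this is what the paper does.
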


\begin{proof}
    It suffices to show that if $\bar b_1\equiv_{\Las}\bar b_2$, then $\varphi(\bar x,\bar b_1)\in p|_{\FC'}$ if and only if $\varphi(\bar x,\bar b_2)\in p|_{\FC'}$. Let $\bar c_1,\bar c_2\in \FC$ be such that $\bar b_j\equiv_M \bar c_j$ for $j=1,2$. 
    Then, $\bar c_1\equiv_{\Las} \bar b_1\equiv_{\Las} \bar b_2\equiv_{\Las} \bar c_2$, and since $p|_{\FC'}\supseteq p$ is $M$-invariant and $p$ is $\autf_{\Las}(\FC)$-invariant, we have that
    \[
        \varphi(\bar x,\bar b_1)\in p|_{\FC'}\Leftrightarrow
        \varphi(\bar x,\bar c_1)\in p \Leftrightarrow
        \varphi(\bar x,\bar c_2)\in p\Leftrightarrow
        \varphi(\bar x,\bar b_2)\in p|_{\FC'}
    \]

    To prove the injectivity, consider $\sigma',\tau'\in\aut(\FC')$ such that
    \[\rho'(p|_{\FC'})(\sigma')^{-1}\autf_{\Las}(\FC')=\rho'(\sigma'\cdot p|_{\FC'})=\rho'(\tau'\cdot p|_{\FC'})=\rho'(p|_{\FC'})(\tau')^{-1}\autf_{\Las}(\FC').\]
    Hence, $(\tau')^{-1}\sigma'\in\autf_{\Las}(\FC')=\stab_l(p|_{\FC'})$ and
    \[\tau'\cdot p|_{\FC'}= \tau'\cdot \big((\tau')^{-1}\sigma' \big)\cdot p|_{\FC'}=\sigma'\cdot p|_{\FC'}.\qedhere\]
\end{proof}

\begin{lemma}\label{lemma: cdot vs ast}
    Suppose $p(\bar{x})\in S^{\inv}_{\bar{n}}(\FC,M)$ is f-generic,
    %$M$ is a small model,
    $\sigma\in \Aut(\FC)$. Let $q_0\coloneqq \tp(\sigma(\bar m)/M)$ and let $q$ be any
    %$M$-invariant
    extension of $q_0$ in $S_{\bar{n}}(\FC)$ (e.g.\ $q$ can be a coheir of $\tp(\sigma(\bar{n})/M)$). Then we have $\sigma\cdot p=p*q$.
\end{lemma}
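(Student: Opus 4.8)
The idea is to transport the desired identity to $\FC'$ via the alternative description of the $\ast$-product (Proposition~\ref{prop: alter_ast}) and then to use that, for an f-generic type, translating by an automorphism of $\FC'$ depends only on its image in $\gal_{\Las}(T)$. Concretely, by Corollary~\ref{rem: aut_is_*} we have $\sigma\cdot p=p\ast\tp(\sigma(\bar{n})/\FC)$, so it is enough to prove that $p\ast q=p\ast\tp(\sigma(\bar{n})/\FC)$. I would fix an extension of $\sigma$ to $\aut(\FC')$, still denoted $\sigma$, and write $q=\tp(\tau(\bar{n})/\FC)$ for some $\tau\in\aut(\FC')$. Since $p$ is f-generic it is in particular $M$-invariant, so Proposition~\ref{prop: alter_ast} applies on both sides and yields $p\ast q=\tau(p|_{\FC'})|_{\FC}$ and $p\ast\tp(\sigma(\bar{n})/\FC)=\sigma(p|_{\FC'})|_{\FC}$; hence it suffices to show $\tau(p|_{\FC'})=\sigma(p|_{\FC'})$. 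By Lemma~\ref{lemma: f-generic extension}, $\stab_l(p|_{\FC'})=\autf_{\Las}(\FC')$, which is a normal subgroup of $\aut(\FC')$, so the task reduces to checking that $\sigma^{-1}\tau\in\autf_{\Las}(\FC')$ — equivalently, that $\rho(q)=\rho(\tp(\sigma(\bar{n})/\FC))$ in $\gal_{\Las}(T)$.

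The remaining point is to deduce this from the hypothesis that $q$ extends $q_0=\tp(\sigma(\bar{m})/M)$. Restricting $q$ to the variables $\bar{x}'$ (the ones corresponding to $\bar{m}$) and to parameters from $M$ shows that this hypothesis says exactly $\tau(\bar{m})\equiv_M\sigma(\bar{m})$. By strong homogeneity of $\FC'$ there is $h\in\aut(\FC'/M)$ with $h\sigma(\bar{m})=\tau(\bar{m})$; then $\tau^{-1}h\sigma$ fixes the enumeration $\bar{m}$ of $M$ pointwise, hence lies in $\aut(\FC'/M)$, and both $\tau^{-1}h\sigma$ and $h$ therefore belong to $\autf_{\Las}(\FC')$ (automorphisms fixing a model pointwise are Lascar-strong, by the standard characterization of $\autf_{\Las}$). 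A short computation with cosets of the normal subgroup $\autf_{\Las}(\FC')$ then gives $\sigma^{-1}\tau\in\autf_{\Las}(\FC')$, which completes the argument, since then $\sigma^{-1}\tau$ fixes $p|_{\FC'}$, so $\tau(p|_{\FC'})=\sigma(p|_{\FC'})$ and thus $p\ast q=\sigma\cdot p$.

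This proof is short once the machinery of Section~4 is in place; the two invocations of Proposition~\ref{prop: alter_ast} and the final coset bookkeeping are routine. The step I expect to require the most care is the last one: seeing that the condition ``$q$ extends $q_0$'' — a statement over the \emph{small} model $M$, in the \emph{short} tuple $\bar{x}'$ — is exactly what pins down the $\autf_{\Las}(\FC')$-coset of $\tau$, i.e.\ the value $\rho(q)$. This is where both hypotheses are genuinely used: f-genericity of $p$ (so that, via Lemma~\ref{lemma: f-generic extension}, only the $\gal_{\Las}$-class of the translating automorphism of $\FC'$ matters) and the fact that $\bar{m}$ enumerates a model of $T$ (so that agreeing over $M$ forces agreement modulo Lascar-strong automorphisms).
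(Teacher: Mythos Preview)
Your proof is correct and follows essentially the same route as the paper's: both reduce via Proposition~\ref{prop: alter_ast} to showing $\tau\cdot p|_{\FC'}=\sigma'\cdot p|_{\FC'}$, then observe that $\tau$ and $\sigma'$ have the same image in $\gal_{\Las}(T)$ (because $q$ extends $q_0$), and finally appeal to Lemma~\ref{lemma: f-generic extension}. The only cosmetic difference is that the paper phrases the last step as ``$\rho'$ is injective on the orbit of $p|_{\FC'}$'' while you use the equivalent formulation $\stab_l(p|_{\FC'})=\autf_{\Las}(\FC')$, and you spell out the coset computation that the paper leaves implicit.
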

\begin{proof}
    Let $\tau\in\Aut(\FC')$ be such that $q=\tp(\tau(\bar n)/\FC)$, and let $\sigma'\in\Aut(\FC')$ be an arbitrary extension of $\sigma$.

    Note that by the choice of $q_0$, the image of $\tau$ in $\gal_{\Las}(T)$
    (via the quotient map $\aut(\FC')\to\aut(\FC')/\autf_{\Las}(\FC')$)
    is the same as the image of $\sigma'$.
    Then
    $$\rho'(\tau\cdot p|_{\FC'})
    =\rho'(p|_{\FC'})\cdot \tau^{-1}\autf_{\Las}(\FC')
    =\rho(p)\cdot (\sigma')^{-1}\autf_{\Las}(\FC')
    =\rho'(\sigma'\cdot p|_{\FC'}).$$

    Since $\rho'$ is injective on the orbit of $p|_{\FC'}$
    and we have $\sigma'\cdot p|_{\FC'}=(\sigma\cdot p)|_{\FC'}$,
    it follows that $(\sigma\cdot p)|_{\FC'}=\tau\cdot p|_{\FC'}$
    Hence
    $(\sigma\cdot p)|_{\FC'}=\tau\cdot p|_{\FC'}=(p*q)|_{\FC'}$.
\end{proof}

\begin{cor}\label{cor: orbit of f-generic}
    If $p(\bar{x})\in S^{\inv}_{\bar{n}}(\FC,M)$ is f-generic then $\Aut(\FC)\cdot p= p*S^{\inv}_{\bar{n}}(\FC,M)$.
\end{cor}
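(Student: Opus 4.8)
The plan is to prove the two inclusions separately, leaning on Lemma~\ref{lemma: cdot vs ast} for one direction and on Corollary~\ref{rem: aut_is_*} for the other. For the inclusion $\Aut(\FC)\cdot p\subseteq p*S^{\inv}_{\bar{n}}(\FC,M)$, I would fix $\sigma\in\Aut(\FC)$ and set $q_0\coloneqq\tp(\sigma(\bar m)/M)$. I then choose $q\in S_{\bar{n}}(\FC)$ to be \emph{some} $M$-invariant global extension of $q_0$ -- for instance a coheir of $\tp(\sigma(\bar n)/M)$, which exists and lies in $S^{\inv}_{\bar{n}}(\FC,M)$ -- so that $q$ is a legitimate element of the codomain $S^{\inv}_{\bar{n}}(\FC,M)$. (This is the one spot that needs a small amount of care: Lemma~\ref{lemma: cdot vs ast} allows an arbitrary extension $q$ of $q_0$, but here we must pick one that is genuinely $M$-invariant so that $p*q\in S^{\inv}_{\bar{n}}(\FC,M)$; a coheir of $\tp(\sigma(\bar n)/M)$ does the job, being finitely satisfiable in $M$ hence $M$-invariant.) Then Lemma~\ref{lemma: cdot vs ast} gives $\sigma\cdot p=p*q\in p*S^{\inv}_{\bar{n}}(\FC,M)$.

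For the reverse inclusion $p*S^{\inv}_{\bar{n}}(\FC,M)\subseteq\Aut(\FC)\cdot p$, I would fix an arbitrary $q\in S^{\inv}_{\bar{n}}(\FC,M)$, write $q=\tp(\tau(\bar n)/\FC)$ for some $\tau\in\aut(\FC')$, and put $\sigma'\coloneqq\tau$ viewed inside $\aut(\FC')$. Applying Lemma~\ref{lemma: cdot vs ast} again -- now \emph{reading it in the other direction}, with $q$ itself in the role of the chosen extension of $q_0\coloneqq\tp(\tau(\bar m)/M)$ -- we get $(p*q)|_{\FC'}=\tau\cdot p|_{\FC'}$. Restricting back down to $\FC$, this says $p*q=(\tau\cdot p|_{\FC'})|_{\FC}$. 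Now $\tau|_{\FC}$ need not be an automorphism of $\FC$ (since $\tau\in\aut(\FC')$ only), but because $p*q\in S^{\inv}_{\bar{n}}(\FC,M)$ has bounded $\Aut(\FC)$-orbit -- it is again f-generic, being in the same $\Aut(\FC')$-orbit as the f-generic $p|_{\FC'}$ by Lemma~\ref{lemma: f-generic extension}, hence Lascar-invariant -- we can replace $\tau$ by a suitable $\sigma\in\Aut(\FC)$ with $\sigma\equiv\tau\pmod{\autf_{\Las}(\FC')}$, i.e.\ $\sigma(\bar n)\equiv_{\Las}\tau(\bar n)$ relative to the larger monster; then $\rho'(\sigma\cdot p|_{\FC'})=\rho'(\tau\cdot p|_{\FC'})$ and injectivity of $\rho'$ on the orbit (Lemma~\ref{lemma: f-generic extension}) forces $\sigma\cdot p|_{\FC'}=\tau\cdot p|_{\FC'}$, hence $p*q=\sigma\cdot p\in\Aut(\FC)\cdot p$.

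The main obstacle I anticipate is bookkeeping the models $M\preceq\FC\preceq\FC'$ correctly: one must make sure the $q$ chosen in the first inclusion is $M$-invariant so $p*q$ stays in the stated codomain, and in the second inclusion one must extract a genuine $\sigma\in\Aut(\FC)$ from $\tau\in\aut(\FC')$, which is exactly where f-genericity (boundedness of the orbit plus injectivity of $\rho'$ on the orbit, Lemma~\ref{lemma: f-generic extension}) is indispensable. Modulo that, both inclusions are direct applications of Lemma~\ref{lemma: cdot vs ast} read in its two natural directions, so the proof should be short.
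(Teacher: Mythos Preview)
Your proof is correct, and the $\subseteq$ direction matches the paper exactly. However, your $\supseteq$ direction is considerably more involved than necessary. You go through $\tau\in\Aut(\FC')$ realizing $q$, then invoke Proposition~\ref{prop: alter_ast} (which you attribute to Lemma~\ref{lemma: cdot vs ast}, though it is really the computation $p*q=(\tau\cdot p|_{\FC'})|_\FC$), and finally use surjectivity of $\Aut(\FC)\to\gal_{\Las}(T)$ together with the injectivity of $\rho'$ on the orbit to produce a matching $\sigma\in\Aut(\FC)$.

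The paper instead observes that Lemma~\ref{lemma: cdot vs ast} already applies \emph{directly}: given $q\in S^{\inv}_{\bar n}(\FC,M)$, simply pick $\sigma\in\Aut(\FC)$ with $\tp(\sigma(\bar m)/M)=q|_{\bar x',M}$ (such $\sigma$ exists because $q|_{\bar x',M}$ is a type over the small model $M$ extending $\tp(\bar m/\emptyset)$, hence is realized in $\FC$ by some $\sigma(\bar m)$). Then $q$ is itself an extension of $q_0=\tp(\sigma(\bar m)/M)$ in $S_{\bar n}(\FC)$, so Lemma~\ref{lemma: cdot vs ast} yields $\sigma\cdot p=p*q$ in one line. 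Your detour through $\FC'$, $\rho'$, and Lemma~\ref{lemma: f-generic extension} re-derives the content of Lemma~\ref{lemma: cdot vs ast} rather than simply invoking it with the right $\sigma$.
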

\begin{proof}
    $\subseteq$ is immediate by Lemma~\ref{lemma: cdot vs ast}.

    For $\supseteq$, take any $q\in S^{\inv}_{\bar{n}}(\FC,M)$ and let $\sigma\in\Aut(\FC)$ be such that $\tp(\sigma(\bar m)/M)=q|_{\bar{x}',M}$. Then by Lemma~\ref{lemma: cdot vs ast}, $\sigma\cdot p=p*q$.
\end{proof}

\begin{cor}
    \label{cor: fgen_by_right}
    %If $T$ is definably amenable, $M$ is any model and
    Let $p\in  S^{\inv}_{\bar{n}}(\FC,M)$, then the following are equivalent:
    \begin{enumerate}
        \item
        $p$ is f-generic,
        \item
        $p* S_{\bar{n}}(\FC)$ is bounded.
    \end{enumerate}
\end{cor}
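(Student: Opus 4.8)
The plan is to note that one inclusion between these two sets is automatic and then feed in the facts about $\ast$ and f-generics established just above. Recall first that, by Definition~\ref{dfn:f-gen}, ``$p$ is f-generic'' means exactly that the orbit $\Aut(\FC)\cdot p$ is bounded. The basic observation is that
\[\Aut(\FC)\cdot p=\{p\ast\tp(\sigma(\bar{n})/\FC):\sigma\in\Aut(\FC)\}\subseteq p\ast S_{\bar{n}}(\FC)\]
for every $p\in S^{\inv}_{\bar{n}}(\FC,M)$: the equality is Corollary~\ref{rem: aut_is_*}, and the inclusion holds because $\tp(\sigma(\bar{n})/\FC)\in S_{\bar{n}}(\FC)$ (as $\sigma$ fixes $\emptyset$).

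Given this, $(2)\Rightarrow(1)$ is immediate: if $p\ast S_{\bar{n}}(\FC)$ is bounded, then so is its subset $\Aut(\FC)\cdot p$, hence $p$ is f-generic by definition. For $(1)\Rightarrow(2)$ I will upgrade the inclusion above to the equality $p\ast S_{\bar{n}}(\FC)=\Aut(\FC)\cdot p$ under the assumption that $p$ is f-generic; since the right-hand side is then bounded, this gives $(2)$. For the missing inclusion $p\ast S_{\bar{n}}(\FC)\subseteq\Aut(\FC)\cdot p$, fix $q\in S_{\bar{n}}(\FC)$ and put $q_0:=q|_{\bar{x}',M}\in S_{\bar{x}'}(M)$. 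Because $q$ concentrates on $\tp(\bar{n}/\emptyset)$ and $\bar{m}$ is the subtuple of $\bar{n}$ singled out by $\bar{x}'$, the type $q_0$ extends $\tp(\bar{m}/\emptyset)$; realizing $q_0$ by some $\bar{a}\in\FC$ (possible by saturation, as $M$ is small) and applying strong homogeneity of $\FC$ to get $\sigma\in\Aut(\FC)$ with $\sigma(\bar{m})=\bar{a}$, we obtain $\tp(\sigma(\bar{m})/M)=q_0$. Then Lemma~\ref{lemma: cdot vs ast}, applied with this $\sigma$ and with $q$ regarded as an extension of $q_0$ in $S_{\bar{n}}(\FC)$, gives $p\ast q=\sigma\cdot p\in\Aut(\FC)\cdot p$.

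I do not expect any genuine obstacle here; the only step that is more than bookkeeping is the reduction of an arbitrary right argument $q\in S_{\bar{n}}(\FC)$ to an element of $\Aut(\FC)$, which rests on the fact---already contained in Lemma~\ref{lemma: cdot vs ast}---that $p\ast q$ depends on $q$ only through its restriction $q|_{\bar{x}',M}$ to the short tuple over $M$, together with the homogeneity of $\FC$. As an alternative to the explicit argument above, one could deduce the equality $p\ast S_{\bar{n}}(\FC)=\Aut(\FC)\cdot p$ directly from Corollary~\ref{cor: orbit of f-generic} plus this last observation.
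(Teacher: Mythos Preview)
Your proof is correct and follows essentially the same approach as the paper's, just with more detail spelled out: the paper proves $(2)\Rightarrow(1)$ by citing Corollary~\ref{rem: aut_is_*} and $(1)\Rightarrow(2)$ by citing Lemma~\ref{lemma: cdot vs ast}, which is exactly what you do. Your explicit argument for $p\ast S_{\bar n}(\FC)\subseteq\Aut(\FC)\cdot p$ (producing $\sigma$ from $q|_{\bar x',M}$ and then invoking Lemma~\ref{lemma: cdot vs ast}) is precisely the unpacking of the paper's one-line citation.
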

\begin{proof}
    The first condition implies the second by Lemma~\ref{lemma: cdot vs ast}.
    The converse follows immediately from Corollary~\ref{rem: aut_is_*}.
\end{proof}

\begin{theorem}\label{thm: fGen and injectivity on Ellis groups}
    If $T$ is an amenable NIP theory then:
    \begin{enumerate}
        \item
        the set $\fGen$ is a two-sided ideal of $(S^{\inv}_{\bar{n}}(\FC,M),\ast)$,

        \item
        for each $p(\bar{x})\in \fGen$,
        we have that $p*S^{\inv}_{\bar{n}}(\FC,M)=\Aut(\FC)\cdot p$ is an Ellis group in $S^{\inv}_{\bar{n}}(\FC,M)$ (in particular, it is a minimal right ideal),

        \item
        $\fGen$ is the unique minimal left ideal in $(S^{\inv}_{\bar{n}}(\FC,M),\ast)$,

        \item
        the Ellis groups in $(S^{\inv}_{\bar{n}}(\FC,M),\ast)$ are exactly the $\aut(\FC)$-orbits of f-generic types,

        \item
        the map $\rho\colon S^{\inv}_{\bar{n}}(\FC,M)\to \gal_{\KP}(T)$ is injective on Ellis groups in $(S^{\inv}_{\bar{n}}(\FC,M),\ast)$.
    \end{enumerate}
\end{theorem}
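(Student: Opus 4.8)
The plan is to work inside the compact left-continuous semigroup $S:=(S^{\inv}_{\bar n}(\FC,M),\ast)$ of Lemma~\ref{lemma:ast_semigroup_for_types}, combining the orbit computations of Lemmas~\ref{lemma: f-generic extension} and~\ref{lemma: cdot vs ast} and Corollaries~\ref{cor: orbit of f-generic}--\ref{cor: fgen_by_right} with the explicit formula of Proposition~\ref{prop: alter_ast} and the classical structure theory of the minimal (two-sided) ideal of such a semigroup (see e.g.\ \cite{rzepecki2018}); recall $\fGen\neq\emptyset$ because $T$ is amenable. For~(1): if $p\in\fGen$ and $q\in S$ then $p\ast q\in S$, and by associativity $(p\ast q)\ast S_{\bar n}(\FC)=p\ast\bigl(q\ast S_{\bar n}(\FC)\bigr)\subseteq p\ast S_{\bar n}(\FC)$, which is bounded since $p$ is f-generic, so $p\ast q$ is f-generic by Corollary~\ref{cor: fgen_by_right}; and if $q\in S$, $p\in\fGen$, then unwinding $h_{\bar b}$ in Definition~\ref{def:star.definition2} gives $\sigma\cdot(q\ast p)=q\ast(\sigma\cdot p)$ for all $\sigma\in\Aut(\FC)$, so $\Aut(\FC)\cdot(q\ast p)$ is the image of the bounded orbit $\Aut(\FC)\cdot p$ under $r\mapsto q\ast r$, hence bounded, and $q\ast p\in\fGen$. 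Thus $\fGen$ is a two-sided ideal.

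The core is~(3). Since $\fGen$ is a two-sided ideal, for any minimal left ideal $L$ the set $\fGen\ast L$ is nonempty and contained in $L\cap\fGen$, so $L\cap\fGen$ is a nonempty left ideal inside $L$ and hence $L\subseteq\fGen$. It therefore suffices to show that $\fGen$ is itself a minimal left ideal, for which I would show $S\ast q=\fGen$ for every $q\in\fGen$. Write $q=\tp(\tau(\bar n)/\FC)$ with $\tau\in\aut(\FC')$; by Proposition~\ref{prop: alter_ast}, $s\ast q=\tau(s|_{\FC'})|_{\FC}$ for all $s\in S$. Given a target $q'\in\fGen$, its $M$-invariant extension $q'|_{\FC'}$ is f-generic over $\FC'$ by Lemma~\ref{lemma: f-generic extension}, i.e.\ $\stab_l(q'|_{\FC'})=\autf_{\Las}(\FC')$; since $\autf_{\Las}(\FC')\trianglelefteq\aut(\FC')$, the conjugate $\tau^{-1}\cdot q'|_{\FC'}$ has the same stabiliser, hence is Lascar-invariant and in particular $M$-invariant, so it is the $M$-invariant extension of a (unique) $s\in S$. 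Then $s\ast q=\tau\bigl(\tau^{-1}\cdot q'|_{\FC'}\bigr)|_{\FC}=q'$, so $q'\in S\ast q$; combined with $S\ast q\subseteq\fGen$ from~(1) this gives $S\ast q=\fGen$. Hence every nonempty left subideal of $\fGen$ equals $\fGen$, so $\fGen$ is minimal, and by the first sentence it is the unique minimal left ideal. I expect this construction of $s$ — exhibiting an $M$-invariant global type whose $\ast$-product with $q$ is the prescribed $q'$, and carrying out the attendant invariance bookkeeping — to be the main obstacle.

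Now $\fGen$ is the kernel of $S$. For~(2) and~(4): fix $p\in\fGen$ and put $R_p:=p\ast S$, which equals $\Aut(\FC)\cdot p$ by Corollary~\ref{cor: orbit of f-generic} and is a right ideal. It is a minimal right ideal, since any nonempty right subideal contains some $r$, which lies in the orbit $\Aut(\FC)\cdot p$, is therefore again f-generic with $\Aut(\FC)\cdot r=\Aut(\FC)\cdot p$, and so $r\ast S=\Aut(\FC)\cdot r=R_p$ again by Corollary~\ref{cor: orbit of f-generic}. The kernel of a compact left-continuous semigroup is completely simple, and a completely simple semigroup with a unique minimal left ideal has the property that its minimal right ideals are exactly its maximal subgroups; applying this to $\fGen$ shows that $R_p$ is an Ellis group of $S$, and conversely every Ellis group, being a maximal subgroup of $\fGen$, contains some (necessarily f-generic) $p$ and equals $p\ast S=\Aut(\FC)\cdot p$.

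Finally~(5): let $\Aut(\FC)\cdot p$ be an Ellis group and suppose $\rho(\sigma_1\cdot p)=\rho(\sigma_2\cdot p)$ for $\sigma_1,\sigma_2\in\Aut(\FC)$. Writing $p=\tp(\sigma_0(\bar n)/\FC)$ and choosing $\sigma_i'\in\aut(\FC')$ extending $\sigma_i$, the definition of $\rho$ gives $\sigma_2'(\sigma_1')^{-1}\in\autf_{\KP}(\FC')$; since $T$ is amenable it is $G$-compact, so $\autf_{\KP}(\FC')=\autf_{\Las}(\FC')$, which equals $\stab_l(p|_{\FC'})$ by Lemma~\ref{lemma: f-generic extension}. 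Normality of $\autf_{\Las}(\FC')$ then yields $(\sigma_2')^{-1}\sigma_1'\in\stab_l(p|_{\FC'})$, so $\sigma_1'\cdot p|_{\FC'}=\sigma_2'\cdot p|_{\FC'}$, and restricting to $\FC$ gives $\sigma_1\cdot p=\sigma_2\cdot p$. (This is the injectivity of $\rho'$ from Lemma~\ref{lemma: f-generic extension}, transported from $\FC'$ down to $\FC$.)
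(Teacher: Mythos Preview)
Your proof is correct, and takes a genuinely different route from the paper's for the central parts (2)--(3).

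The paper proves (2) first: given $p\in\fGen$, it locates a minimal idempotent $u$ in the right ideal $\Aut(\FC)\cdot p=p\ast S$ (via the fact that this right ideal meets a minimal left ideal, hence meets an Ellis group, hence contains the idempotent of that group), and then checks directly using $\rho$ that $q\ast u=q$ for all $q\in\Aut(\FC)\cdot u$, so that $\Aut(\FC)\cdot u=(u\ast S)\ast u$ is an Ellis group. Point (3) is then derived from (2): since each orbit is an Ellis group and hence a right ideal, the minimal left ideal is unique, and since every f-generic lies in such an Ellis group, $\fGen$ equals that minimal left ideal.

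You instead prove (3) directly and first, by exhibiting for each $q,q'\in\fGen$ an explicit $s\in S$ with $s\ast q=q'$: namely $s:=(\tau^{-1}\cdot q'|_{\FC'})|_{\FC}$, whose $M$-invariance comes from normality of $\autf_{\Las}(\FC')$ and Lemma~\ref{lemma: f-generic extension}. This is a clean and rather elegant argument that bypasses any need to produce idempotents in the (not obviously closed) orbit. You then obtain (2) from abstract Rees--Suschkewitsch structure theory: once $\fGen$ is the unique minimal left ideal, the kernel is a left group, so its minimal right ideals coincide with its maximal subgroups; combined with your direct verification that each $R_p=\Aut(\FC)\cdot p$ is a minimal right ideal, this gives that $R_p$ is an Ellis group.

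What each buys: the paper's route is more self-contained and makes the role of $\rho$ explicit already in (2), at the cost of the (standard but not entirely trivial) step of finding an idempotent in a non-closed right ideal. Your route front-loads the key invariance computation into a single explicit construction for (3), and then appeals to off-the-shelf semigroup theory for (2); this is arguably more transparent about \emph{why} $\fGen$ is left-minimal. Your arguments for (1), (4), and (5) are essentially the same as the paper's, with (5) spelled out in more detail.

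One small remark: in your proof of the right-ideal half of (1) you invoke associativity of $\ast$ in the mixed form $(p\ast q)\ast r=p\ast(q\ast r)$ with $r\in S_{\bar n}(\FC)$ not necessarily $M$-invariant; this holds (by the same computation as in Lemma~\ref{lemma:ast_semigroup_for_types}/Proposition~\ref{prop: alter_ast}) but is not stated explicitly in the paper, so you may want to note it. Alternatively, the paper's one-line argument via Corollary~\ref{cor: orbit of f-generic} (that $p\ast q\in\Aut(\FC)\cdot p$, which is already bounded) avoids this.
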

\begin{proof}
    Being left ideal in (1) follows by Corollary~\ref{cor: fgen_by_right},
    and being right ideal follows by Corollary~\ref{cor: orbit of f-generic} and the definition of an f-generic.

    Now, we argue for (2).
    Take any f-generic $p(\bar{x})\in S^{\inv}_{\bar{n}}(\FC,M)$.
    Then by Corollary~\ref{cor: orbit of f-generic}, $\Aut(\FC)\cdot p$ is a right ideal in $(S^{\inv}_{\bar{n}}(\FC,M),\ast)$, so it contains a minimal idempotent $u=\sigma_0\cdot p$ (i.e.\ $u*S^{\inv}_{\bar{n}}(\FC,M)$ is a minimal right ideal). Then $u$ is also f-generic and $\Aut(\FC)\cdot p=\Aut(\FC)\cdot u$ and $u*S^{\inv}_{\bar{n}}(\FC,M)=p*S^{\inv}_{\bar{n}}(\FC,M)$, so without loss of generality $p=u$.

    Note that for each $q\in \Aut(\FC)\cdot u$, we have $q*u=q$.
    Indeed, let $\tau\in\Aut(\FC)$ be such that $\tp(\tau(\bar m)/M)\subseteq u$.
    By Lemma~\ref{lemma: cdot vs ast} we have
    \[\rho(u)=\rho(u\ast u)=\rho(\tau\cdot u)=\rho(u)\tau^{-1}\autf_{\KP}(\FC),\]
    hence $\tau\in\autf_{KP}(\FC)$.
    As $q$ is an f-generic, we have $q*u=\tau\cdot q$ (by Lemma~\ref{lemma: cdot vs ast}).
    Then, since $\tau\in \autf_{KP}(\FC)$ and $q$ is f-generic, $\tau\cdot q=q$.

    It follows that $\Aut(\FC)\cdot u= (\aut(\FC)\cdot u)\ast u =\big(u*S^{\inv}_{\bar{n}}(\FC,M)\big)*u$, which is an Ellis group (of the right ideal of all f-generic types in $S^{\inv}_{\bar{n}}(\FC,M)$), because $u$ is a minimal idempotent.

    For (3), note that by (2), Ellis groups are right ideals, so the minimal left ideal is unique and contained in $\fGen$ by (1).
    This minimal left ideal is a union of all Ellis groups.
    By (2), each f-generic type is in an Ellis group, hence in the minimal left ideal and we see that $\fGen$ must be equal to the minimal left ideal.

    The fourth condition follows easily from the first three points.
    The fifth follows from the fourth and the definition of f-generic.
\end{proof}

Let us note that Theorem~\ref{thm: fGen and injectivity on Ellis groups} and its proof are parallel to their analogues for definable groups by the second author and Kyle Gannon in \cite[Section 3]{GR25}. There is also a similar earlier result (also for definable groups) \cite[Lemma 2.3]{Anand2013}.

\subsection{Idempotent measures with f-generics}
In this subsection, we notice that Conjecture~\ref{conjecture: main conjecture} holds in amenable NIP theories provided there is an f-generic in the support of the considered Keisler measure.
Before that, we analyze structure of the support of an idempotent measure which has an f-generic type in its support.

Let $\mu\in\mathfrak{M}^{\df}_{\bar{n}}(\FC,M)$ be idempotent with $\supp(\mu)\subseteq S^{\inv}_{\bar{n}}(\FC,M)$.
Let $S:=\supp(\mu)$, which is a compact Hausdorff left-continuous sub-semigroup in $(S^{\inv}_{\bar{n}}(\FC,M),\ast)$.
Recall that $\rho\colon S^{\inv}_{\bar{c}}(\FC,M)\to\gal_{\KP}(T)$ is given by
$\rho(\tp(\sigma(\bar{c})/\FC))=\sigma^{-1}\autf_{\KP}(\FC')$, where $\tp(\sigma(\bar{c})/\FC)\in S^{\inv}_{\bar{c}}(\FC,M)$ and $\sigma\in\aut(\FC')$.

\begin{cor}
    (NIP)
    Suppose there is an f-generic type in $S$. Then the natural homomorphism $\rho|_S\colon S\to \gal_{KP}(T)$ is injective on Ellis groups.
\end{cor}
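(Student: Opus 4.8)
The plan is to deduce this from Theorem~\ref{thm: fGen and injectivity on Ellis groups} by showing that every Ellis group of the sub-semigroup $(S,\ast)$ is contained in an Ellis group of the ambient semigroup $(S^{\inv}_{\bar{n}}(\FC,M),\ast)$, on which $\rho$ is already known to be injective. First I would observe that the hypotheses already force $T$ to be amenable: $T$ is NIP and $S$ contains an f-generic, so $\fGen\neq\emptyset$, and by the characterisation of amenable NIP theories as exactly those admitting f-generics we get amenability; hence Theorem~\ref{thm: fGen and injectivity on Ellis groups} is available, and in particular $\fGen$ is a two-sided ideal of $(S^{\inv}_{\bar{n}}(\FC,M),\ast)$.

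Next I would pin down the f-generic types inside $S$. By Proposition~\ref{rem: idempotent support 1}, $(S,\ast)$ is a compact left-continuous sub-semigroup of $(S^{\inv}_{\bar{n}}(\FC,M),\ast)$, so $\fGen\cap S$ is a two-sided ideal of $(S,\ast)$: it is non-empty by hypothesis, and for $a\in S$, $b\in\fGen\cap S$ we have $a\ast b,\,b\ast a\in S$ (sub-semigroup) and $a\ast b,\,b\ast a\in\fGen$ ($\fGen$ a two-sided ideal of the ambient semigroup). Consequently the kernel $K(S)$ (the smallest two-sided ideal of $S$) satisfies $K(S)\subseteq\fGen$, and since the Ellis groups of a compact left-continuous semigroup lie inside its kernel, every Ellis group of $(S,\ast)$ — in particular its identity idempotent — consists of f-generic types.

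Now fix an Ellis group $\mathcal{G}$ of $(S,\ast)$ with identity idempotent $u$; by the previous step $u\in\fGen$. For each $g\in\mathcal{G}$ we have
\[
g=u\ast g\in u\ast S^{\inv}_{\bar{n}}(\FC,M)=\Aut(\FC)\cdot u,
\]
the last equality by Corollary~\ref{cor: orbit of f-generic} (as $u$ is f-generic and $u\in S\subseteq S^{\inv}_{\bar{n}}(\FC,M)$). By Theorem~\ref{thm: fGen and injectivity on Ellis groups}(4) the set $\Aut(\FC)\cdot u$ is an Ellis group of $(S^{\inv}_{\bar{n}}(\FC,M),\ast)$, and by Theorem~\ref{thm: fGen and injectivity on Ellis groups}(5) the map $\rho$ is injective on it; since $\mathcal{G}\subseteq\Aut(\FC)\cdot u$, the restriction $\rho|_{\mathcal{G}}$ is injective, which is what is claimed.

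The main obstacle is the single structural input in the second paragraph: confirming that the Ellis groups of the sub-semigroup $S$ really do sit inside its kernel $K(S)$, and that $K(S)$ sits inside the ideal $\fGen\cap S$. Both are standard facts about compact left-topological semigroups ($K(S)$ is the union of the minimal left ideals and is contained in every non-empty two-sided ideal), but this is exactly the step that upgrades "there is an f-generic somewhere in $S$" to "the identity of every Ellis group of $S$ is f-generic"; once it is in place, the rest is just bookkeeping with $\fGen$, Corollary~\ref{cor: orbit of f-generic}, and Theorem~\ref{thm: fGen and injectivity on Ellis groups}.
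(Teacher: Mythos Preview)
Your proof is correct and follows essentially the same approach as the paper: the paper simply cites Proposition~\ref{prop: ideals vs semigroups} (Ellis groups of a closed subsemigroup intersecting the minimal ideal are subgroups of Ellis groups of the ambient semigroup) together with Theorem~\ref{thm: fGen and injectivity on Ellis groups}, and your argument amounts to unpacking the proof of Proposition~\ref{prop: ideals vs semigroups} explicitly via the inclusion $K(S)\subseteq\fGen$ and the identity $g=u\ast g\in u\ast S^{\inv}_{\bar{n}}(\FC,M)$. Your preliminary observation that the existence of an f-generic forces amenability (so that Theorem~\ref{thm: fGen and injectivity on Ellis groups} applies) is a detail the paper leaves implicit.
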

\begin{proof}
    Immediate by Proposition~\ref{prop: ideals vs semigroups}
    and Theorem~\ref{thm: fGen and injectivity on Ellis groups}.
\end{proof}

\begin{cor}\label{cor: description of idempotent measures with f-generics}
    (NIP)
    Suppose there is an f-generic type in $S$.
    Then:
    \begin{enumerate}
        \item $\mu$ is $\autf_{\KP}(\FC)$-invariant,

        \item
        \label{it:kernel_idempotents}
        the kernel of $\rho|_S$ consists exactly of the idempotents in $S$,
        \item
        $S$ is left simple,
        \item
        \label{it:product_idemp}
        the product of idempotents in $S$ is idempotent,
        \item
        \label{it:structure_S}
        $S$ is of the form $G\times J$, where $G$ is a subgroup of $\gal_{KP}(T)$ and $J$ is a semigroup of idempotents (isomorphic to $J_1\times J_2$, where $J_1$ is a left semigroup and $J_2$ is a right semigroup).

        \item
        \label{it:support_transfer}
        $G=\rho[\supp(\mu)]=\supp(\rho_\ast\mu)$ and $(\rho_\ast\mu)|_G$ is the normalized Haar measure on the closed (in the logic topology) subgroup $G\leqslant\gal_{\KP}(T)$.
        Moreover, $\stab_l(\mu)\leqslant\aut(\FC)$ is the preimage of $G\leqslant\gal_{\KP}(T)$
        via the canonical quotient map $\aut(\FC)\to\aut(\FC)/\autf_{\KP}(\FC)$.

    \end{enumerate}
\end{cor}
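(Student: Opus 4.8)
The plan is to first show that, under the hypothesis, the whole support $S=\supp(\mu)$ consists of f-generic types, and then to feed this into the semigroup-theoretic analysis of f-generics from Section~\ref{sec: f-generics and idempotent measures}, together with the classical structure theory of simple compact semigroups and of idempotent measures on compact groups.

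\emph{Parts (1)--(4).} The f-generic types in $S$ are exactly $\fGen\cap S$, and by Theorem~\ref{thm: fGen and injectivity on Ellis groups}(1),(3) the set $\fGen$ is a closed (being the minimal left ideal) two-sided ideal of $(S^{\inv}_{\bar n}(\FC,M),\ast)$, so $\fGen\cap S$ is a closed two-sided ideal of $(S,\ast)$; it is nonempty by hypothesis, hence all of $S$ by Proposition~\ref{rem: idempotent support 1}. Thus every $p\in S$ is f-generic, so $\autf_{\KP}(\FC)\leqslant\stab_l(p)$ fixes every point of $S$; as $\mu$ is concentrated on $S$, this forces $\sigma_\ast\mu=\mu$ for all $\sigma\in\autf_{\KP}(\FC)$, which is (1). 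Arguing as in the proof of Theorem~\ref{thm: fGen and injectivity on Ellis groups} (using Proposition~\ref{prop: alter_ast} and normality of $\autf_{\KP}(\FC')$ in $\aut(\FC')$), one has $\rho(p\ast q)=\rho(p)\rho(q)$ for f-generic $p$, so $\rho|_S\colon S\to\gal_{\KP}(T)$ is a continuous semigroup homomorphism, whence $G:=\rho[S]$ is a compact sub-semigroup of the compact group $\gal_{\KP}(T)$, i.e.\ a closed subgroup. Since $S\subseteq\fGen$ and $\fGen$ is the minimal left ideal of the ambient semigroup, every idempotent $e\in S$ is a right identity for $\fGen$ (standard: $\fGen e=\fGen$ by minimality, so $xe=(ye)e=ye=x$ for $x=ye\in\fGen$); in particular $x\ast e=x$ for all $x\in S$. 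As $S$ contains an idempotent (\cite[Fact A.8]{rzepecki2018}), $S\ast e=S$, so every minimal left ideal of $S$ equals $S$, giving (3) (via Remark~\ref{rem:left_simple_min_ideal}). For (2): an idempotent $e\in S$ has $\rho(e)=\rho(e)^2$, so $\rho(e)=1$; conversely, if $q\in S$ has $\rho(q)=1$, pick $\sigma$ with $\tp(\sigma(\bar m)/M)=q|_{\bar x',M}$, note as in the proof of Lemma~\ref{lemma: cdot vs ast} that then $\sigma\in\autf_{\KP}(\FC)=\stab_l(q)$, and apply Lemma~\ref{lemma: cdot vs ast} to obtain $q\ast q=\sigma\cdot q=q$; hence the kernel of $\rho|_S$ is exactly the set $J$ of idempotents of $S$. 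For (4): for idempotents $e,f\in S$ we have $f\ast e=f$ ($e$ being a right identity for $\fGen\ni f$), so $(e\ast f)\ast(e\ast f)=e\ast(f\ast e)\ast f=e\ast f\ast f=e\ast f$, and moreover $j_1\ast j_2=j_1$ for all $j_1,j_2\in J$, so $J$ is a left-zero sub-semigroup of $S$.

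\emph{Part (5).} Fix an idempotent $e\in S$. Then $eS$ is a sub-semigroup with two-sided identity $e$ (using $s\ast e=s$ for $s\in S$) which is left-simple (since $(eS)\ast(e\ast s)=e\ast(S\ast s)=e\ast S=eS$, using (3)), hence a group; $\rho|_{eS}\colon eS\to G$ is a homomorphism, onto ($\rho[eS]=\rho(e)\,\rho[S]=G$) with kernel $eS\cap J=\{e\}$ by (2) (the only idempotent of the group $eS$ is $e$), so it is an isomorphism $eS\cong G$. Writing $\varphi\colon G\to eS$ for its inverse, the map $\Psi\colon G\times J\to S$, $\Psi(g,j)=j\ast\varphi(g)$, is a semigroup isomorphism, where $G\times J$ is given the product $(g_1,j_1)(g_2,j_2)=(g_1g_2,j_1)$: it is a homomorphism because $\varphi(g_1)\ast j_2=\varphi(g_1)$ (right identity) and $\varphi(g_1)\ast\varphi(g_2)=\varphi(g_1g_2)$; it is injective because $\rho(\Psi(g,j))=g$ and then multiplying $j\ast\varphi(g)$ on the right by $\varphi(g)^{-1}\in eS$ recovers $j$; and it is onto because $s=j\ast\varphi(\rho(s))$ for $j:=s\ast\varphi(\rho(s)^{-1})\in J$. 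This is (5), with $J$ left-zero (so $J_2$ in the statement is trivial), and it shows that $\rho|_S$ corresponds, under this identification, to the projection onto $G$.

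\emph{Part (6).} By definition $G=\rho[\supp(\mu)]$, and $\supp(\rho_\ast\mu)=\overline{\rho[S]}=G$ since $\rho[S]$ is compact. Writing $\mu=\int_S\delta_p\,d\mu(p)$ and using that $\rho|_S$ is a homomorphism, one checks (via the definition of $\ast$ and Remark~\ref{rem: old star}) that $\rho_\ast$ carries the $\ast$-product of measures concentrated on $S$ to the convolution on $\gal_{\KP}(T)$; hence $\rho_\ast\mu$ is convolution-idempotent on the compact group $\gal_{\KP}(T)$, so by the Kawada--It\^o/Pym theorem (\cite[Theorem A.4.1]{Pym62}) $\rho_\ast\mu$ is the normalized Haar measure $\lambda_G$ of the closed subgroup $G=\supp(\rho_\ast\mu)$. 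Finally, by (1) the group $\stab_l(\mu)$ is the preimage of a subgroup $H\leqslant\gal_{\KP}(T)$, and by Corollary~\ref{rem: aut_is_*}, $\sigma\in\stab_l(\mu)$ iff $\tp(\sigma(\bar n)/\FC)\in\stab_r(\mu)$, so it suffices to see $\stab_r(\mu)=\rho^{-1}[G]$. For ``$\subseteq$'': applying $\rho_\ast$ to $\mu\ast q=\mu$ gives $\lambda_G\ast\delta_{\rho(q)}=\lambda_G$, and comparing supports ($G\rho(q)$ versus $G$) yields $\rho(q)\in G$. For ``$\supseteq$'': first, $\mu\ast q$ depends only on $\rho(q)$ --- indeed $S\subseteq\fGen$ and Lemma~\ref{lemma: f-generic extension} make $\mu|_{\FC'}$ concentrated on the f-generic types over $\FC'$, hence $\autf_{\KP}(\FC')$-invariant, so by Proposition~\ref{prop: alter_ast} the measure $\mu\ast q=\tau(\mu|_{\FC'})|_{\FC}$ (where $q=\tp(\tau(\bar n)/\FC)$) is unchanged when $\tau$ is moved within its coset mod $\autf_{\KP}(\FC')$; then $\rho(q)\in G=\rho[S]$ lets us choose $q'\in S$ with $\rho(q')=\rho(q)$, and $q'\in S\subseteq\stab_r(\mu)$ (Theorem~\ref{thm: minimal support vs stabilizer} with (3)) gives $\mu\ast q=\mu\ast q'=\mu$. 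Hence $H=G$. \emph{Main obstacle:} the delicate points are the explicit identification in (5) (one must keep in mind that left translations in $(S,\ast)$ need not be continuous, so $eS$ is not a priori closed and it is $\rho$, with compact image, that yields the closed subgroup $G$) and the factorisation of $\mu\ast q$ through $\rho(q)$ in (6), which rests on promoting $\autf_{\KP}(\FC)$-invariance of $\mu$ to $\autf_{\KP}(\FC')$-invariance of $\mu|_{\FC'}$ via Lemma~\ref{lemma: f-generic extension}.
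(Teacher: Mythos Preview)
Your proof is correct and in several places more explicit than the paper's. The overall architecture is the same (first reduce to $S\subseteq\fGen$ via simplicity of $S$ and Theorem~\ref{thm: fGen and injectivity on Ellis groups}, then exploit the homomorphism $\rho|_S$), but the individual steps differ in flavour. For (2), the paper argues abstractly: any $p\in\ker\rho|_S$ lies in an Ellis group $pSp$ (by simplicity), and injectivity of $\rho$ on Ellis groups forces $p$ to equal the idempotent there; you instead compute directly via Lemma~\ref{lemma: cdot vs ast} that $q\ast q=\sigma\cdot q=q$. For (3), the paper's written proof is essentially silent (the line ``This gives us (1)'' appears to be a typo), whereas your argument that every idempotent of $\fGen$ is a right identity for $\fGen$ (hence for $S$) is clean and in fact yields the refinement that $J$ is a left-zero semigroup, so $J_2$ in the statement is trivial. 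For (5) the paper just says ``it is easy to see''; your explicit isomorphism $(g,j)\mapsto j\ast\varphi(g)$ fills this in. For (6), the paper simply invokes \cite[Theorem 6.30]{GHK}, while you argue directly via Pym's theorem and the description of $\stab_r(\mu)$; your route is more self-contained but requires verifying that $\rho_\ast$ intertwines $\ast$ with convolution and that $\mu|_{\FC'}$ is $\autf_{\KP}(\FC')$-invariant---both are fine (the latter follows more quickly from (1) and Remark~\ref{rem: diamond and Lascar} than from the support argument you sketch).
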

\begin{proof}
    By Remark~\ref{rem: S in I}, if $S$ contains an f-generic type then $S\subseteq\fGen$.
    By NIP, $\mu$ is $M_0$-invariant for every small model $M_0$, i.e.\ $\mu$ is $\autf_{\Las}(\FC)$-invariant.
    Then $\mu$ is $\autf_{\KP}(\FC)$-invariant since $\autf_{\Las}(\FC)=\autf_{\KP}(\FC)$.

    For the proof of \eqref{it:kernel_idempotents}, suppose $p\in \ker \rho|_S$. Then since $S$ is simple, $p$ is in an Ellis group $pSp$. If $u\in pSp$ is idempotent, then $u\in \ker \rho|_S$, and since $\rho$ is injective on $pSp$, it follows that $p=u$. This gives us (1).

    \eqref{it:product_idemp} follows since all idempotents are in $\ker\rho|_S$ and $\ker\rho|_S$ is a subsemigroup.

    For \eqref{it:structure_S}, let $G\leqslant \gal_{KP}(T)$ be the image of $\rho|_S$ and $J\coloneqq \ker\rho|_S$ is the set of idempotents in $S$. Then it is easy to see that $S\cong G\times J$.

    For \eqref{it:support_transfer}, we need to note that $\supp(\rho_\ast\mu)=\rho[\supp(\mu)]$ which follows since $\rho$ is a continuous open map, $\rho^{-1}\rho[A]=\autf_{\KP}(\FC)\cdot A$ and $\mu$ is $\autf_{\KP}(\FC)$-invariant. Then we use Theorem 6.30 from \cite{GHK}.
\end{proof}

\begin{cor}\label{cor: conj when f-gen in support}
    (NIP)
    Conjecture~\ref{conjecture: main conjecture} holds for measures having an f-generic type in their support.
\end{cor}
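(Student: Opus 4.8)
The plan is to deduce this from Theorem~\ref{thm: weak conjecture} together with Corollary~\ref{cor: description of idempotent measures with f-generics}. First I would dispose of the two differences between Conjecture~\ref{conjecture: main conjecture} and Theorem~\ref{thm: weak conjecture} that are automatic under NIP: every $M$-invariant measure is Borel $M$-definable, so $\mathfrak{M}^{\Borel}_{\pi(\bar{x};\bar{m})}(\FC,M)=\mathfrak{M}^{\inv}_{\pi(\bar{x};\bar{m})}(\FC,M)$, and the classes of fim and superfim measures coincide. Hence for a fim $\mu\in\mathfrak{M}^{\df}_{\bar m}(\FC,M)$ the two conditions of Conjecture~\ref{conjecture: main conjecture} become: (1) $\mu$ is idempotent, and (2) $\mu$ is the unique (left) $G_{\pi,\FC}$-invariant measure in $\mathfrak{M}^{\Borel}_{\pi(\bar{x};\bar{m})}(\FC,M)$.

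For the implication (2)$\Rightarrow$(1) I would invoke the standard argument (\cite[Lemma 6.5, Proposition 6.6]{GHK}, as in the proof of Theorem~\ref{thm: weak conjecture}); this direction needs neither the fim assumption nor an f-generic in the support. For (1)$\Rightarrow$(2), assume $\mu$ is idempotent. Since $\mu$ is fim over $M$ we have $\supp(\mu)\subseteq S^{\inv}_{\bar m}(\FC,M)$, and by hypothesis $\supp(\mu)$ contains an f-generic type, so Corollary~\ref{cor: description of idempotent measures with f-generics} applies to $\mu$ (with $\bar n=\bar m$): part~(3) yields that $\supp(\mu)$ is left-simple, and part~(1) yields that $\mu$ is $\autf_{\KP}(\FC)$-invariant. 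Since the existence of an f-generic forces $T$ to be amenable, hence $G$-compact, we have $\autf_{\KP}(\FC)=\autf_{\Las}(\FC)$, so $\mu$ is in fact Lascar-invariant and therefore satisfies \eqref{eq:restricted_lascar_inv} by Remark~\ref{rem: diamond and Lascar}. Now $\mu$ is superfim over $M$, satisfies \eqref{eq:restricted_lascar_inv}, is idempotent, and has left-simple support, so condition~(1) of Theorem~\ref{thm: weak conjecture} holds and Theorem~\ref{thm: weak conjecture} gives (2).

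I do not expect a genuine obstacle here: the content is entirely contained in Corollary~\ref{cor: description of idempotent measures with f-generics}, and the extra hypotheses of Theorem~\ref{thm: weak conjecture} relative to Conjecture~\ref{conjecture: main conjecture} --- Borel definability, superfim, \eqref{eq:restricted_lascar_inv}, and left-simplicity of the support --- are all automatic in this situation (the last two precisely because of that corollary). The one point I would spell out carefully is that a single f-generic type in $\supp(\mu)$ forces the entire support into $\fGen$, so that Corollary~\ref{cor: description of idempotent measures with f-generics} is genuinely applicable; this follows from $\fGen$ being a two-sided ideal of $(S^{\inv}_{\bar{m}}(\FC,M),\ast)$ (Theorem~\ref{thm: fGen and injectivity on Ellis groups}) and is already used inside the proof of that corollary.
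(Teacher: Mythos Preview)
Your argument is correct, but the paper's proof is shorter and takes a different route. Once Corollary~\ref{cor: description of idempotent measures with f-generics}(1) gives that $\mu$ is $\autf_{\KP}(\FC)$-invariant, the paper simply invokes \cite[Theorem 6.31]{GHK}, which already establishes Conjecture~\ref{conjecture: main conjecture} in the NIP + KP-invariant case. You instead reprove that case from within this paper: you extract left-simplicity from Corollary~\ref{cor: description of idempotent measures with f-generics}(3), derive \eqref{eq:restricted_lascar_inv} from KP-invariance via $G$-compactness and Remark~\ref{rem: diamond and Lascar}, and then feed everything into Theorem~\ref{thm: weak conjecture}. Your route is longer but has the merit of being self-contained in Section~\ref{sec: ideals and stabilizers} and of explicitly exhibiting this corollary as an instance of Theorem~\ref{thm: weak conjecture}; indeed, the paper alludes to exactly this in the discussion following Theorem~\ref{thm: weak conjecture} (the ``NIP+KP-invariant case'' bullet), pointing forward to the present corollary for the left-simplicity step.
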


\begin{proof}
    Let $\bar{n}=\bar{m}$ and consider an idempotent
    $\mu\in\mathfrak{M}^{\df}_{\bar{m}}(\FC,M)$ with $S=\supp(\mu)$ as above.
    If $S\cap\fGen\neq\emptyset$ then $\mu$ is $\autf_{\KP}(\FC)$-invariant 
    by Corollary~\ref{cor: description of idempotent measures with f-generics}(1).
    Then, we apply Theorem 6.31 from \cite{GHK}.
\end{proof}

\section{Decomposing invariant measures}
In this section we assume that $T$ is %(amenable)
NIP and we aim to describe $\aut(\FC)$-invariant measures by a decomposition into canonical invariant measures living on the Ellis groups of the minimal left ideal $\fGen$.
In other words we will describe the ergodic Keisler measures, similarly to what was done in \cite{ArtemPierre} for the case of a definable group.
We adapted the proofs from sections 3 and 4 from \cite{ArtemPierre} to work in the case of the automorphism group.

We start with setting the backstage.
Of course, we assume that there is at least one invariant Keisler measure, i.e.\ we assume that $T$ is an amenable NIP theory.
Recall that $M\preceq N\preceq\FC$ and $M$ is small. We have also enumerations $\bar{m}\subseteq\bar{n}\subseteq\bar{c}$ of $M$, $N$ and $\FC$ respectively. Moreover $\bar{x}'$ and $\bar{y}'$ are copies of a tuple of variables related to $\bar{m}$, and $\bar{x}$ and $\bar{y}$ are copies of a tuple of variables related to $\bar{n}$.

\subsection{Almost pullback}
Recall, that we have the following map
\[\rho\colon S_{\bar{n}}(\FC)\to\gal_{\KP}(T)\]
\[\rho(\tp(\sigma(\bar{n})/\FC)):=\sigma^{-1}\autf_{\KP}(\FC').\]
Let $S:=S^{\inv}_{\bar{n}}(\FC,M)$ and let $I:=\fGen\subseteq S$.
For every $p\in I$, $\aut(\FC)\cdot p=p\ast S$ is an Ellis group of the minimal left ideal $I$ (by Theorem~\ref{thm: fGen and injectivity on Ellis groups}).
Moreover,
there is $u^2=u\in I$ such that $p\ast S=u\ast I$ and the restriction of $\rho$:
\[\rho|_{u\ast I}\colon u\ast I\to\gal_{\KP}(T)\]
is continuous, bijective homomorphism of  groups (note that $u*I$ may not be compact, so it does not immediately follow that $\rho|_{u\ast I}$ is a topological isomorphism).
Let $r_{\FC}\colon \aut(\FC')/\autf_{\KP}(\FC')\to\aut(\FC)/\autf_{\KP}(\FC)$ be the canonical isomorphism of Kim-Pillay groups.

\begin{lemma}\label{lemma: rho is Borel}
    The inverse of $\rho|_{u\ast I}$ is a Borel map.
    In particular $\rho|_{u\ast I}$ is a Borel group isomorphism (i.e.\ it is a Borel group isomorphism whose inverse is also Borel).
\end{lemma}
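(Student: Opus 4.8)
The plan is to show that the inverse map $(\rho|_{u\ast I})^{-1}\colon \gal_{\KP}(T)\to u\ast I$ is Borel by exhibiting $\gal_{\KP}(T)$ as a countable union of Borel pieces on each of which the inverse is continuous (or at least visibly Borel), leveraging compactness of the ambient space $S$ together with the Borel-definability of the members of $S^{\inv}_{\bar{n}}(\FC,M)$. First I would recall the setup: $u\ast I$ is a subgroup of the semigroup $S$, it equals $\aut(\FC)\cdot p$ for the relevant f-generic $p$, and $\rho|_{u\ast I}$ is a continuous group bijection onto $\gal_{\KP}(T)$ by Theorem~\ref{thm: fGen and injectivity on Ellis groups} together with the discussion preceding the lemma. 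The obstruction to this being a topological isomorphism outright is that $u\ast I$ need not be closed in $S$, hence need not be compact; so a naive "continuous bijection from compact to Hausdorff is a homeomorphism" argument is unavailable. The workaround is that $u\ast I$ is nonetheless an orbit of a Polish-type group action, or more precisely, that $\gal_{\KP}(T)$ is a compact (Hausdorff) group with a countable basis in the relevant setting (we are in countable NIP for the decomposition part), so a Baire-category / Pettis-style argument applies.

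Concretely, the key steps in order: (i) identify $u\ast I$ with $\aut(\FC)\cdot u$ and observe that $g\mapsto g\cdot u$ descends through $\aut(\FC)/\autf_{\KP}(\FC)=\gal_{\KP}(T)$ to give a continuous bijection $\gal_{\KP}(T)\to u\ast I$ which is exactly $(\rho|_{u\ast I})^{-1}$ (up to the canonical identifications and the isomorphism $r_{\FC}$); (ii) note that $u\ast I\subseteq S$ and $S$ is second-countable compact Hausdorff (as a closed subspace of a space of types over a monster, in the countable language / countable $M$ setting), so $u\ast I$ with its subspace topology is a separable metrizable group, and the continuous bijection from the compact group $\gal_{\KP}(T)$ is automatically a Borel isomorphism by the Lusin–Souslin theorem (a continuous injection between Polish — or standard Borel — spaces has Borel image and Borel inverse); (iii) to actually invoke Lusin–Souslin one needs $\gal_{\KP}(T)$ to be standard Borel and $u\ast I$ to be (a Borel subset of) a standard Borel space — the former holds because $\gal_{\KP}(T)$ is compact metrizable here, and the latter because $S$ is compact metrizable, hence standard Borel, and $u\ast I$ is the continuous injective image of a standard Borel space, hence itself Borel in $S$ and standard Borel. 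Then $(\rho|_{u\ast I})^{-1}\colon \gal_{\KP}(T)\to S$ is a Borel map with Borel image, which is precisely the assertion, and combined with the already-known continuity of $\rho|_{u\ast I}$ we conclude it is a Borel group isomorphism.

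The main obstacle I anticipate is the metrizability/second-countability bookkeeping: one must be careful that in this section the standing hypotheses ($T$ countable, NIP, amenable) indeed force $S^{\inv}_{\bar{n}}(\FC,M)$ and $\gal_{\KP}(T)$ to be metrizable — for $\gal_{\KP}(T)$ this is standard for countable theories (it is a compact Polish group), and for $S$ one uses that the relevant space of global $M$-invariant types, while a priori large, has its Borel structure controlled by the countably many formulas. If for some reason one does not want to assume metrizability of $u\ast I$ outright, the fallback is a direct argument: cover $\gal_{\KP}(T)$ by countably many Borel sets, use that $\rho$ is continuous and open on $S$ (as noted in the excerpt's discussion of $\rho$) together with Borel-definability of fiber functions to write $(\rho|_{u\ast I})^{-1}$ as a countable gluing of restrictions of continuous sections; this is more hands-on but avoids the descriptive-set-theoretic black box. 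I would present the Lusin–Souslin route as the main line and remark on the alternative. Either way, the conclusion that $\rho|_{u\ast I}$ is a Borel group isomorphism follows, and this is what is needed for the subsequent "almost pullback" constructions.
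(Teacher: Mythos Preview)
Your approach has two genuine gaps.

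First, the claim in step (i) that $g\mapsto g\cdot u$ gives a \emph{continuous} map $\gal_{\KP}(T)\to u\ast I$ cannot be correct in general: if it were, then $u\ast I$, as the continuous image of the compact space $\gal_{\KP}(T)$, would be compact and hence closed in $S$. But the paper explicitly notes (in the paragraph following Definition~\ref{def: almost pullback} and its footnote) that $u\ast I=p\ast S$ is \emph{not} necessarily closed, and gives an example. So continuity of $(\rho|_{u\ast I})^{-1}$ fails, and you cannot sidestep the Borel question this way.

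Second, the Lusin--Souslin route requires the target space to be standard Borel, but $S=S^{\inv}_{\bar{n}}(\FC,M)$ is essentially never second countable: the variables $\bar{x}$ enumerate a model $N$ (possibly $N=\FC$), and the parameters range over all of $\FC$, so there are uncountably many basic clopen sets $[\theta(\bar{x};\bar{b})]$ even when $\CL$ is countable. Moreover, the countability hypothesis on $\CL$ you invoke is only imposed \emph{after} this lemma in the paper, so it is not available here.

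The paper's proof avoids all of this by a direct computation. To show that $(\rho|_{u\ast I})^{-1}$ is Borel, it suffices to check that the $\rho$-image of each basic clopen set intersected with $u\ast I$ is Borel in $\gal_{\KP}(T)$. Using that $\rho$ is a homomorphism and that $u\ast I=\aut(\FC)\cdot u$ (Lemma~\ref{lemma: cdot vs ast}), one computes
\[
r_{\FC}\,\rho\big[\,[\theta(\bar{x};\bar{b})]\cap u\ast I\,\big]=\{\sigma^{-1}\autf_{\KP}(\FC)\;:\;\sigma\in\aut(\FC),\ \sigma(u)\in[\theta(\bar{x};\bar{b})]\},
\]
and this set is Borel (in fact constructible) in $\gal_{\KP}(T)$ by the analysis in \cite{HruKruPi} (discussion after Fact~2.23 there), which shows that for an f-generic type the set of automorphism classes moving it into a given clopen set is constructible. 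No descriptive-set-theoretic black box and no metrizability is needed.
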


\begin{proof}
    To see that the inverse of $\rho|_{u\ast I}$ is a Borel map we check that for every $\theta(\bar{x};\bar{y})$ and $\bar{b}\in\FC^{\bar{y}}$ the set
    $\rho\big[ [\theta(\bar{x};\bar{b})]\,\cap\,u\ast I\big]$ is a Borel subset of $\gal_{\KP}(T)$.

    Then (using the fact that $\rho$ is a homomorphism
    to get the second line
    and then Lemma~\ref{lemma: cdot vs ast} to get the last line),
    \begin{IEEEeqnarray*}{rCl}
        r_{\FC}\rho\big[ [\theta(\bar{x};\bar{b})]\,\cap\,u\ast I\big] &=&
        r_{\FC}\big\{ \rho(u\ast q)\;:\;q\in I,\,u\ast q\in[\theta(\bar{x};\bar{b})]\big\} \\
        &=& r_{\FC}\big\{\rho(q)\;:\; q\in I,\,u\ast q\in[\theta(\bar{x};\bar{b})]\big\} \\
        &=& \big\{r_{\FC}\rho(q)\;:\; q\in I,\,u\ast q\in[\theta(\bar{x};\bar{b})]\big\} \\
        &=& \big\{\sigma^{-1}\autf_{\KP}(\FC)\;:\;\sigma\in\aut(\FC),\, \sigma(u)\in[\theta(\bar{x};\bar{b})]\big\}
    \end{IEEEeqnarray*}
    and this set is Borel (even constructible) by the discussion after Fact 2.23 in \cite{HruKruPi}.
    As $r_{\FC}$ is a homeomorphism, we have that
    also $\rho\big[ [\theta(\bar{x};\bar{b})]\,\cap\,u\ast I\big]$ is Borel (even constructible).
\end{proof}

Let $h_{\FC}$ and $h_{\FC'}$ be the unique normalized Haar measures on
\[\aut(\FC)/\autf_{\KP}(\FC)\cong \aut(\FC')/\autf_{\KP}(\FC'),\]
respectively.
The set in the last exposed line in the above proof has the same $h_{\FC}$-measure as its inverse, the set
\[\big\{\sigma\autf_{\KP}(\FC)\;:\;\sigma\in\aut(\FC),\, \sigma(u)\in[\theta(\bar{x};\bar{b})]\big\}\]
which is the image $d_u(U)$ of $U=[\theta(\bar{x};\bar{b})]$ via a variant of
the map $d_u$ from Definition 2.2 in \cite{NEWELSKI_2014}.

\begin{definition}\label{def: almost pullback}
    Let $p\in I$, $\theta(\bar{x};\bar{y})\in\CL$ and $\bar{b}\in\FC^{\bar{y}}$.
    We define
    \[\mu_p(\theta(\bar{x};\bar{b})):=h_{\FC'}\Big( \rho\big[ [\theta(\bar{x};\bar{b})]\,\cap\,p\ast S\big] \Big).\]
\end{definition}

Note that the above defined $\mu_p$ is induced by the pullback of $h_{\FC'}$ to the Borel group $p\ast S$ via the Borel isomorphism $\rho|_{p\ast S}$. However, $p\ast S$ is not necessarily a closed subset (and so a closed subgroup)\footnote{For example, it is not the case in a real closed field expanded by an affine copy of $S^1$.} and thus $\mu_p$ is not simply the pullback of $h_{\FC'}$ but rather a way of measuring the trace of clopen/closed subsets of $S$ in $p\ast S$.
In this section, we will see that such a point of view is correct and that there is no ambiguity in computing the measure on the closure of $p\ast S$.

Let us also note, that in the context of finitely satisfiable types and measures on a definable group, the other approach (i.e.\ taking precisely the pullback of $h_{\FC'}$ to the Ellis group, considered with $\tau$-topology) was undertaken in Proposition 4.15 of \cite{CGK}, leading to Theorem 1.6/Corollary 5.18 there. This approach required additional assumption on countability of $M$, so the revised Newelski's conjecture follows and gives us that the $\tau$-topology is Hausdorff. In contrast, for invariant types, it is 
not even clear
what the analogue of the $\tau$-topology should be.

\begin{lemma}\label{lemma: mu_p is inv Keisler measure}
    If $p\in I$ then $\mu_p\in\mathfrak{M}^{\inv}_{\bar{n}}(\FC,\emptyset)$
    and $\supp(\mu_p)\subseteq \overline{p\ast S}$.\footnote{In fact if $p$ is ``almost periodic'' in the sense that $\overline{p*S}=\overline{q*S}$ for every $q\in\overline{p*S}$, then $\supp(\mu_p)=\overline{p*S}$.}
    % Moreover, $\supp(\mu_p)=\overline{p\ast S}$ provided for every $q\in\overline{p\ast S}$ we have $p*S\subseteq \overline{q*S}$.
\end{lemma}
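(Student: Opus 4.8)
The statement has two parts: (i) $\mu_p$ is a well-defined $\emptyset$-invariant Keisler measure in $\mathfrak{M}_{\bar n}(\FC)$, and (ii) its support is contained in $\overline{p\ast S}$. I would first check that $\mu_p$ is a finitely additive measure on the clopen algebra of $[\tp(\bar n/\emptyset)]$; by the standard correspondence (Riesz/Stone) this extends to a genuine Radon measure on $S_{\bar n}(\FC)$. The key observation is the computation in the proof of Lemma~\ref{lemma: rho is Borel}: for a formula $\theta(\bar x;\bar b)$, the set $\rho[[\theta(\bar x;\bar b)]\cap p\ast S]$ is a Borel (in fact constructible) subset of $\gal_{\KP}(T)$, so $h_{\FC'}$ of it makes sense and the definition of $\mu_p$ is legitimate. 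Finite additivity: if $[\theta_1(\bar x;\bar b)]$ and $[\theta_2(\bar x;\bar b)]$ are disjoint clopen subsets of $[\tp(\bar n/\emptyset)]$, then $[\theta_1]\cap p\ast S$ and $[\theta_2]\cap p\ast S$ are disjoint, and since $\rho|_{p\ast S}$ is a bijection onto $\gal_{\KP}(T)$ (this is exactly the content of the paragraph preceding Definition~\ref{def: almost pullback}, via Theorem~\ref{thm: fGen and injectivity on Ellis groups}), the images under $\rho$ are disjoint as well; additivity of $h_{\FC'}$ finishes this. Normalization $\mu_p(\tp(\bar n/\emptyset))=1$ follows because $\rho|_{p\ast S}$ is onto $\gal_{\KP}(T)$ and $h_{\FC'}$ is normalized.

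For $\emptyset$-invariance, fix $\sigma\in\aut(\FC)$ and a formula $\theta(\bar x;\bar b)$. By Corollary~\ref{rem: aut_is_*} we have $\sigma\cdot p = p\ast\tp(\sigma(\bar n)/\FC)$, and since $p$ is f-generic this lies in $p\ast S = \aut(\FC)\cdot p$, an Ellis group stabilized (setwise) by left translation. Concretely: $\sigma$ acts on $[\theta(\bar x;\bar b)]$ by $\sigma\cdot[\theta(\bar x;\bar b)] = [\theta(\bar x;\sigma(\bar b))]$ (viewing $\sigma$ acting on types), so I need $h_{\FC'}(\rho[[\theta(\bar x;\sigma(\bar b))]\cap p\ast S]) = h_{\FC'}(\rho[[\theta(\bar x;\bar b)]\cap p\ast S])$. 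The point is that left multiplication by $\tp(\sigma(\bar m)/M)$-extensions permutes $p\ast S$ and, under $\rho$ (a homomorphism), corresponds to left translation by a fixed element of $\gal_{\KP}(T)$; then left-invariance of the Haar measure $h_{\FC'}$ gives the equality. The cleanest way to organize this: show $\sigma\cdot([\theta(\bar x;\bar b)]\cap p\ast S)$ maps under $\rho$ to a left-translate of $\rho[[\theta(\bar x;\bar b)]\cap p\ast S]$, using Lemma~\ref{lemma: cdot vs ast} to identify the action of $\sigma$ on $p\ast S$ with left $\ast$-multiplication by an appropriate $q$, and that $\rho(q\ast -) = \rho(q)\cdot\rho(-)$ since $\rho$ restricted to the relevant orbit is a homomorphism.

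For the support statement, suppose $r\in S_{\bar n}(\FC)$ with $r\notin\overline{p\ast S}$. Then there is a basic clopen neighbourhood $[\theta(\bar x;\bar b)]\ni r$ disjoint from $p\ast S$, so $[\theta(\bar x;\bar b)]\cap p\ast S=\emptyset$, hence $\rho$ of it is empty and $\mu_p(\theta(\bar x;\bar b))=h_{\FC'}(\emptyset)=0$; thus $r\notin\supp(\mu_p)$. This is the easy direction.

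**Main obstacle.** The delicate point is the $\emptyset$-invariance argument, because it requires carefully tracking how $\sigma\in\aut(\FC)$ acts on $[\theta(\bar x;\bar b)]$ versus on $p\ast S$ and reconciling this with left-translation on $\gal_{\KP}(T)$ — in particular making sure the choice of $q$ (an extension of $\tp(\sigma(\bar m)/M)$ used in Lemma~\ref{lemma: cdot vs ast}) is handled so that $\rho(q)$ is well-defined and independent of the extension, which is exactly what f-genericity of $p$ buys us via injectivity of $\rho'$ on the orbit of $p|_{\FC'}$ (Lemma~\ref{lemma: f-generic extension}). A secondary subtlety is that $p\ast S$ need not be closed, so one must be slightly careful that $\rho|_{p\ast S}$ being a bijective Borel homomorphism onto $\gal_{\KP}(T)$ (rather than a topological isomorphism) still suffices for all of the additivity/invariance bookkeeping — but it does, since all we use about $h_{\FC'}$ on the images is that it is a left-invariant normalized Borel probability measure and that $\rho$ transports disjoint unions to disjoint unions.
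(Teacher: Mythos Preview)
Your proposal is essentially correct and carries out the direct verification that the paper explicitly mentions as possible but chooses not to do: the paper writes ``We could check directly that $\mu_p\in\mathfrak{M}_{\bar{n}}^{\inv}(\FC,\emptyset)$, but let us do in a different way to explain the relation with some results of \cite{HruKruPi}.'' Instead of verifying finite additivity and invariance from scratch, the paper identifies $\mu_p$ with a measure $\tilde\mu$ already constructed in \cite{HruKruPi} (obtained from the Haar measure on $\gal_{\KP}(T)$, transported to the Boolean algebra of relatively $\bar n$-definable subsets of $\aut(\FC)$ and thence to types), and checks $\tilde\mu=\mu_p$ via a short chain of equalities; $\aut(\FC)$-invariance then comes for free from that earlier construction. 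Your route is more self-contained; the paper's route is shorter and emphasizes the link to prior work. For the support inclusion $\supp(\mu_p)\subseteq\overline{p\ast S}$, your argument is the contrapositive of the paper's and is essentially identical.

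One small correction to your invariance sketch: the action of $\sigma\in\aut(\FC)$ on elements of $p\ast S$ corresponds to \emph{right} $\ast$-multiplication, not left (by Corollary~\ref{rem: aut_is_*}, $\sigma\cdot r=r\ast\tp(\sigma(\bar n)/\FC)$ for each $r\in p\ast S$), and hence under $\rho$ to right translation on $\gal_{\KP}(T)$. This is harmless since Haar measure on a compact group is bi-invariant, but your phrasing (``left $\ast$-multiplication'', ``left-translate'') should be adjusted. Also, to invoke the homomorphism property of $\rho|_{p\ast S}$ cleanly you should replace $q=\tp(\sigma(\bar n)/\FC)$ (which need not lie in $p\ast S$) by $u\ast q\in p\ast S$, where $u$ is the idempotent of the Ellis group; since $r\ast u=r$ for $r\in p\ast S$, this leaves $\sigma\cdot r$ unchanged and makes the translation argument go through verbatim.
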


\begin{proof}
    We could check directly that $\mu_p\in\mathfrak{M}_{\bar{n}}^{\inv}(\FC,\emptyset)$, but let us do in a different way to explain the relation with some results of \cite{HruKruPi}.

    In the discussion after \cite[Fact 2.23]{HruKruPi}, authors define an $\aut(\FC)$-invariant finitely additive probability measure $\mu$ on the relatively $\bar{n}$-definable subsets of $\aut(\FC)$. After applying \cite[Proposition 2.12]{HruKruPi} (and parts of its proof),
    we transfer $\mu$ over the space of types via the isomorphism of the Boolean algebra of clopen subsets of $S_{\bar n}(\FC)$ and the algebra of relatively definable subsets of $\Aut(\FC)$ given by
    $[\theta(\bar{x};\bar{b})]\mapsto A_{\theta,\bar{n},\bar{b}}(=\{\sigma\in\Aut(\FC)\;:\; \models \theta(\sigma(\bar n),\bar b)\})$ to obtain an $\aut(\FC)$-invariant Keisler measure $\tilde{\mu}\in\mathfrak{M}_{\bar{n}}(\FC)$.
    Then
    \begin{IEEEeqnarray*}{rCl}
        \tilde{\mu}(\theta(\bar{x};\bar{b})) &=& \mu(A_{\theta,\bar{n},\bar{b}})
        =h_{\FC}\Big(\big\{\sigma\autf_{\KP}(\FC)\;:\;\sigma(p)\in[\theta(\bar{x};\bar{b}))]\big\}\Big) \\
        &=& h_{\FC}\Big(\big\{\sigma^{-1}\autf_{\KP}(\FC)\;:\;\sigma(p)\in[\theta(\bar{x};\bar{b}))]\big\}\Big) \\
        &=& h_{\FC}\Big(r_{\FC}\rho\big[ [\theta(\bar{x};\bar{b}))]\,\cap\,p\ast S \big] \Big) \\
        &=& h_{\FC'}\Big(\rho\big[ [\theta(\bar{x};\bar{b}))]\,\cap\,p\ast S \big] \Big)=\mu_p(\theta(\bar{x};\bar{b})),
    \end{IEEEeqnarray*}
    so $\mu_p$ is the $\Aut(\FC)$-invariant measure $\tilde \mu$.

    Now, let us argue for $\supp(\mu_p)\subseteq \overline{p\ast S}$.
    If $q\in\supp(\mu_p)$ and $\theta(\bar{x};\bar{b})\in q$ then
    \[0<\mu_p(\theta(\bar{x};\bar{b}))=h_{\FC'}\Big(\rho\big[ [\theta(\bar{x};\bar{b})]\,\cap\,p\ast S \big]\Big).\]
    Hence $[\theta(\bar{x};\bar{b}))]\,\cap\,p\ast S\neq\emptyset$ and so $q\in\overline{p\ast S}$.
\end{proof}

\subsection{Approximation lemmas}
Now, we need to prove several auxiliary results on approximating measures.
Let $p\in I$, $\theta(\bar{x};\bar{y})\in\CL$, $\bar{b}\in\FC^{\bar{y}}$ and let $K\subseteq I$. We set
\begin{IEEEeqnarray*}{rClCl}
    A_{\theta(\bar{x};\bar{b}),p} &:=& \{\sigma\autf_{\KP}(\FC') &:& \sigma\cdot p|_{\FC'}\in[\theta(\bar{x};\bar{b})]\}, \\
    \mathcal{F}_{\theta(\bar{x};\bar{b}),K} &:=&
    \{A_{\theta(\bar{x};\bar{b}),p}\cdot\tau\autf_{\KP}(\FC') &:& p\in K,\,\tau\in\aut(\FC')\}.
\end{IEEEeqnarray*}

\begin{lemma}\label{lemma: F finite VC dimension}
    $\CF_{\theta(\bar{x};\bar{b}),K}$ has finite VC-dimension.
\end{lemma}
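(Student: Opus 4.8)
The plan is to reduce the statement to the NIP-ness of the single formula $\theta(\bar{x};\bar{y})$, by unwinding the membership relation defining the sets of $\CF_{\theta(\bar{x};\bar{b}),K}$ into an instance of $\theta$ with parameters in the larger monster $\FC''$. First I would fix, for each $p\in K$, a realization $\bar{a}_p\models p|_{\FC'}$ in $\FC''$ (possible since $\FC''$ is a sufficiently saturated monster over $\FC'$). Note that $A_{\theta(\bar{x};\bar{b}),p}$ genuinely is a subset of $\aut(\FC')/\autf_{\KP}(\FC')$: by Lemma~\ref{lemma: f-generic extension} we have $\stab_l(p|_{\FC'})=\autf_{\Las}(\FC')$, and $\autf_{\Las}(\FC')=\autf_{\KP}(\FC')$ because amenable theories are $G$-compact (main result of \cite{HruKruPi}), so the condition ``$\sigma\cdot p|_{\FC'}\in[\theta(\bar{x};\bar{b})]$'' does not depend on the representative $\sigma$ of its coset. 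Since $\autf_{\KP}(\FC')$ is normal in $\aut(\FC')$ and $\sigma\cdot p|_{\FC'}\ni\theta(\bar{x};\bar{b})$ iff $p|_{\FC'}\ni\theta(\bar{x};\sigma^{-1}(\bar{b}))$, unwinding the definitions gives, for $\sigma,\tau\in\aut(\FC')$,
\begin{equation*}
\sigma\autf_{\KP}(\FC')\in A_{\theta(\bar{x};\bar{b}),p}\cdot\tau\autf_{\KP}(\FC')
\iff
\models\theta\big(\bar{a}_p;\,\tau\sigma^{-1}(\bar{b})\big).
\end{equation*}

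Now suppose toward a contradiction that $\CF_{\theta(\bar{x};\bar{b}),K}$ has infinite VC-dimension. Then for each $n<\omega$ there are cosets $\sigma_1\autf_{\KP}(\FC'),\dots,\sigma_n\autf_{\KP}(\FC')$ shattered by $\CF_{\theta(\bar{x};\bar{b}),K}$; fix representatives $\sigma_i\in\aut(\FC')$ and set $\bar{b}_i:=\sigma_i^{-1}(\bar{b})\in(\FC')^{\bar{y}}$. Shattering provides, for each $Z\subseteq\{1,\dots,n\}$, a pair $(p_Z,\tau_Z)\in K\times\aut(\FC')$ such that, by the displayed equivalence, $\models\theta(\bar{a}_{p_Z};\tau_Z(\bar{b}_i))$ iff $i\in Z$. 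Extend $\tau_Z$ to $\tilde{\tau}_Z\in\aut(\FC'')$ and put $\bar{a}'_Z:=\tilde{\tau}_Z^{-1}(\bar{a}_{p_Z})\in(\FC'')^{\bar{x}}$; applying $\tilde{\tau}_Z^{-1}$ to the previous equivalence and using $\tilde{\tau}_Z^{-1}(\tau_Z(\bar{b}_i))=\bar{b}_i$ (valid since $\tau_Z(\bar{b}_i)\in\FC'$) yields $\models\theta(\bar{a}'_Z;\bar{b}_i)$ iff $i\in Z$. Passing to the finitely many coordinates of $\bar{x}$ and $\bar{y}$ actually occurring in $\theta$, this says that an $n$-element set is shattered by the instances of $\theta$, with the roles of the object and parameter variables exchanged. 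Since $n$ is arbitrary, by compactness $\theta$ has IP, contradicting NIP. Following the reduction through in fact bounds $\mathrm{VC}(\CF_{\theta(\bar{x};\bar{b}),K})$ in terms of the VC-dimension of (the dual of) $\theta$.

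I do not expect a serious obstacle: the argument is essentially just the unwinding above, parallel to the corresponding step in \cite{ArtemPierre} for definable groups. The only points needing care are the bookkeeping across the three monster models $\FC\preceq\FC'\preceq\FC''$ — in particular extending automorphisms of $\FC'$ to $\FC''$ — and the appeal to $G$-compactness of amenable theories, which is what makes the sets $A_{\theta(\bar{x};\bar{b}),p}$ well-defined on $\autf_{\KP}$-cosets; both are routine.
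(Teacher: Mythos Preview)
Your proposal is correct and follows essentially the same approach as the paper: both unwind the membership condition to an instance of $\theta$ and bound the VC-dimension of $\CF_{\theta(\bar{x};\bar{b}),K}$ by that of $\theta^{\mathrm{opp}}$. The only cosmetic difference is that the paper directly realizes the shifted types $\tau_J^{-1}\cdot p_J|_{\FC'}$ in $\FC''$, whereas you first realize $p|_{\FC'}$ and then apply an extended automorphism $\tilde\tau_Z^{-1}$, which amounts to the same thing; your remark on well-definedness via $G$-compactness is a helpful clarification the paper leaves implicit.
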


\begin{proof}
    Assume that $\{\sigma_1\autf_{\KP}(\FC'),\ldots,\sigma_n\autf_{\KP}(\FC')\}$ is shattered by $\CF_{\theta(\bar{x};\bar{b}),K}$ and take $J\subseteq\{1,\ldots,n\}$.
    There exists $p_J\in K$ and $\tau_J\in\aut(\FC')$ such that
    \begin{IEEEeqnarray*}{rCl}
        j\in J &\iff & \sigma_j\autf_{\KP}(\FC')\in A_{\theta(\bar{x};\bar{b}),p_J}\cdot \tau_J\autf_{\KP}(\FC') \\
        &\iff & \sigma_j\tau_J^{-1}\autf_{\KP}(\FC')\in A_{\theta(\bar{x};\bar{b}),p_J} \\
        &\iff & \sigma_j\tau_J^{-1}\cdot p_J|_{\FC'}\ni \theta(\bar{x};\bar{b}) \\
        &\iff & \tau_J^{-1}\cdot p_J|_{\FC'}\ni \theta(\bar{x};\sigma_j^{-1}(\bar{b})).
    \end{IEEEeqnarray*}
    Now, for every $J\subseteq \{1,\ldots,n\}$, let $\bar{a}_J\models \tau_J^{-1}\cdot p_J|_{\FC'}$ (in a bigger monster $\FC''$),
    and, for every $j\in \{1,\ldots,n\}$, let $\bar{b}_j:=\sigma_j^{-1}(\bar{b})$.
    We have
    \[j\in J\quad\iff\quad\models\theta(\bar{a}_J;\bar{b}_j)\]
    and so the size of the shattered set, i.e.\ $n$, is bounded by the VC-dimension of $\theta^{\mathrm{opp}}(\bar{y};\bar{x})$.
\end{proof}

\begin{fact}[Fact 2.1 in \cite{ArtemPierre}]\label{fact: 2.1 from ArtemPierre}
    For every $k>0$ and $\epsilon>0$ there is $N<\omega$ such that:
    If $(X,\mu)$ is a probability space and $\CF\subseteq\mathcal{P}(X)$ has VC-dimension $\leqslant k$ and satisfies
    \begin{enumerate}
        \item every $F\in\CF$ is measurable,

        \item for every $n<\omega$, the function $f_n\colon X^{\times n}\to[0,1]$, given by
        \[(x_1,\ldots,x_n)\mapsto\sup\limits_{F\in\CF}|\Av(x_1,\ldots,x_n;F)-\mu(F)|,\]
        is measurable,

        \item for every $n<\omega$, the function $g_n\colon X^{\times 2n}\to[0,1]$, given by
        \[(x_1,\ldots,x_n,y_1,\ldots,y_n)\mapsto\sup\limits_{F\in\CF}|\Av(x_1,\ldots,x_n;F)-\Av(y_1,\ldots,y_n;F)|,\]
        is measurable,
    \end{enumerate}
    then there exist $x_1,\ldots,x_N\in X$ such that for every $F\in\CF$ we have
    \[\mu(F)\approx_{\epsilon}\Av(x_1,\ldots,x_N;F)\left(\coloneqq \frac1n\left|\{j\leq n\;:\;x_j\in F\}\right|\right).\]
\end{fact}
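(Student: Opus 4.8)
The plan is to produce the points $x_1,\dots,x_N$ probabilistically, via the classical Vapnik--Chervonenkis uniform law of large numbers: draw $x_1,\dots,x_N$ independently according to $\mu$ and show that, for $N$ depending only on $k$ and $\epsilon$,
\[
\mu^{\times N}\Bigl(\,\sup_{F\in\CF}\bigl|\Av(x_1,\dots,x_N;F)-\mu(F)\bigr|\geq\epsilon\,\Bigr)<1,
\]
so that any realization of the sample lying outside this event witnesses the conclusion. Hypotheses (1)--(3) are present exactly so that the events and suprema appearing in the argument are measurable even though $\CF$ may be uncountable.

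First I would carry out the \emph{symmetrization} step. Let $\hat\mu_N$ be the empirical measure of $x_1,\dots,x_N$ and $\hat\mu_N'$ that of an independent copy $x_1',\dots,x_N'$. Provided $N\epsilon^2\geq 2$, a standard argument (Chebyshev applied to a single $F$, using hypothesis (2) to know the relevant event is measurable) gives
\[
\mu^{\times N}\Bigl(\sup_{F\in\CF}|\hat\mu_N(F)-\mu(F)|\geq\epsilon\Bigr)\;\leq\;2\,\mu^{\times 2N}\Bigl(\sup_{F\in\CF}|\hat\mu_N(F)-\hat\mu_N'(F)|\geq\tfrac{\epsilon}{2}\Bigr),
\]
the event on the right being measurable by hypothesis (3). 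Next I would condition on the $2N$ sampled points and use exchangeability: the conditional probability equals the average over random signs $\eta_1,\dots,\eta_N\in\{-1,1\}$ of the indicator of $\{\sup_{F\in\CF}\lvert\frac1N\sum_{i\leq N}\eta_i(\mathbbm{1}_F(x_i)-\mathbbm{1}_F(x_i'))\rvert\geq\epsilon/2\}$. For \emph{fixed} points, as $F$ ranges over $\CF$ the vector $(\mathbbm{1}_F(x_i),\mathbbm{1}_F(x_i'))_{i\leq N}\in\{0,1\}^{2N}$ takes at most $\sum_{j\leq k}\binom{2N}{j}\leq(2eN/k)^{k}$ values by the Sauer--Shelah lemma applied to the hypothesis $\mathrm{VC}(\CF)\leq k$; this is the only point at which $k$ enters. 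Hence that supremum is a maximum over at most $(2eN/k)^k$ sums of $\pm$-weighted $\{-1,0,1\}$ terms, and Hoeffding's inequality together with a union bound give the conditional estimate $2(2eN/k)^{k}e^{-N\epsilon^2/8}$, uniformly in the points.

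Combining the two displays, the probability in question is at most $4(2eN/k)^{k}e^{-N\epsilon^2/8}$, a quantity depending only on $N,k,\epsilon$ that tends to $0$ as $N\to\infty$ for fixed $k,\epsilon$; taking $N=N(k,\epsilon)$ large enough that it is $<1$ finishes the proof. \textbf{The main difficulty} is the measurability bookkeeping rather than the combinatorics: for an uncountable $\CF$ neither $\sup_{F}|\hat\mu_N(F)-\mu(F)|$ nor its two-sample analogue is automatically measurable, so one must check at each stage (Fubini in the symmetrization, conditioning on the $2N$ points, integrating out the signs) that only the measurability provided by (1)--(3) is invoked, together with the observation that after conditioning on the points the relevant supremum collapses to a finite maximum and is therefore harmless. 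Alternatively, one may simply quote \cite[Fact~2.1]{ArtemPierre}, since this is a well-known consequence of VC theory.
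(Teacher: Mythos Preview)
The paper does not prove this statement at all: it is stated as a \emph{Fact} and simply quoted from \cite{ArtemPierre}, with no argument given. Your proposal, by contrast, sketches the standard Vapnik--Chervonenkis proof (symmetrization, Sauer--Shelah, Hoeffding with a union bound), which is correct and is indeed the argument underlying the cited fact; you even note yourself at the end that one could just quote the reference, which is exactly what the paper does.
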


\begin{remark}
    \label{remark:super_monster}
    Note that given any $M\models T$ and $\kappa$, we can find an elementary extension $N\succeq M$ which is $\kappa$-saturated and such that for every $\CL_0\subseteq \CL$, $N|_{\CL_0}$ is $\kappa$-strongly homogeneous as a model of $T|_{\CL_0}$ (this follows from compactness and a standard book-keeping argument, and follows from $\kappa$-bigness as defined in \cite[Chapter 10]{Hodges}). In particular, we may assume that the monster model $\FC$ (and $\FC'$) is also a monster model for all reducts of $T$.
\end{remark}

In the proof of the following lemma, at some point, we need to show that a certain analytic set in $\gal_{\KP}(T)$ is measurable. This follows easily by a general theorem for Polish groups (saying that analytic sets are universally measurable), thus we repeat the strategy from \cite[Lemma 3.21]{ArtemPierre} and pass to a countable sublanguage of $\CL$ to work with a Kim-Pillay group which is Polish.
However, in contrast to the definable group case from \cite{ArtemPierre},
we encounter some issues.
In \cite{ArtemPierre},
where the acting group $G$ (i.e.\ the definable group) remains unchanged after passing to a countable sublanguage containing the definition of $G$, 
we have that
being an f-generic in \cite{ArtemPierre} (given in terms of $G$-dividing) is preserved after passing to the sublanguage.
In our case, the acting group is the group of automorphisms, which depends on the choice of the sublanguage. 
To address this issue, we could avoid all the problems by assuming that the language $\CL$ is countable, or we could consider a special class of f-generics which remain f-generic after passing to a countable sublanguage
\footnote{Not all f-generics are like that, even not in amenable NIP. 
For example, let $(R,\leqslant)$ be a sufficiently saturated dense linear ordering, let $A\preceq R$ be a submodel of uncountable cofinality, and set $\CL_A=\{\leqslant\}\cup \{a\;:\;a\in A\}$.
Let $p\in S_1(R)$ be the type of an element ``just after $A$''. This
$p$ is $\emptyset$-invariant (in the language $\CL_A$) and so f-generic. 
On the other hand, the uncountable cofinality implies that for every countable $\CL_0\subseteq \CL$ containing $\leqslant$, in $p|_{\CL_0}$ there will a formula of the form $a\leqslant x\leqslant b$, where $b>A$ and $a\in A$ is strictly greater than the constants remaining in $\CL_0$. This formula clearly $2$-divides in $R|_{\CL_0}$. Thus, $p|_{\CL_0}$ is not an f-generic.}.
We decided to include a more general formulation of Lemma~\ref{lemma: F satisfies assumptions of Fact 2.1} to present possible tactic to extend our results in the future, and to assume countability of $\CL$ just after Lemma~\ref{lemma: F satisfies assumptions of Fact 2.1} to avoid a less natural formulation of several results.

\begin{lemma}\label{lemma: F satisfies assumptions of Fact 2.1}
    Let $K$ be countable and assume that for every $p(\bar{x})\in K$ there
    exists a countable $\CL_0\subseteq\CL$ containing $\theta$ such that
    $p|_{\CL_0}$ is an f-generic in $T|_{\CL_0}$ (i.e.\ $p|_{\CL_0}$ does not fork over $\emptyset$).
    
    Then $\CF_{\theta(\bar{x};\bar{b}),K}$ satisfies every point of the assumptions of Fact~\ref{fact: 2.1 from ArtemPierre} for the probability space $(\gal_{\KP}(T), h_{\FC'})$.
\end{lemma}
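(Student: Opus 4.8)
The plan is to follow the pattern of \cite[Lemma~3.21]{ArtemPierre}: reduce the measure-theoretic bookkeeping, member by member of $K$, to a countable sublanguage whose Galois group is Polish, and argue there. By Remark~\ref{remark:super_monster} I may assume $\FC$ and $\FC'$ stay monster models for all of the countably many reducts that will occur.

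For assumption (1) of Fact~\ref{fact: 2.1 from ArtemPierre}, first I would note that each $F\in\CF_{\theta(\bar{x};\bar{b}),K}$ is a right translate $A_{\theta(\bar{x};\bar{b}),p}\cdot\tau\autf_{\KP}(\FC')$ with $p\in K$, and since $p$ is f-generic, $p|_{\FC'}$ is f-generic by Lemma~\ref{lemma: f-generic extension}, hence $\autf_{\KP}(\FC')$-invariant (Remark~\ref{rem: 4 properties}); so $A_{\theta(\bar{x};\bar{b}),p}$ is well defined on $\autf_{\KP}(\FC')$-cosets, and exactly as in the proof of Lemma~\ref{lemma: rho is Borel} (the discussion after \cite[Fact~2.23]{HruKruPi}) it is a Borel --- indeed constructible --- subset of $\gal_{\KP}(T)$. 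As the normalized Haar measure $h_{\FC'}$ on the compact group $\gal_{\KP}(T)$ is two-sided invariant, every translate $F$ is Borel, hence $h_{\FC'}$-measurable.

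For assumptions (2) and (3) I would use that $\CF_{\theta(\bar{x};\bar{b}),K}=\bigcup_{p\in K}\{A_{\theta(\bar{x};\bar{b}),p}\cdot\eta:\eta\in\gal_{\KP}(T)\}$ and that $K$ is countable, so $f_n$ and $g_n$ are countable suprema over $p\in K$ of the corresponding functions built from the single subfamily attached to $p$, and it suffices to treat one $p$ at a time. Fix $p\in K$ and a countable $\CL_0\subseteq\CL$ with $\theta\in\CL_0$ such that $p|_{\CL_0}$ is f-generic in $T|_{\CL_0}$ (provided by the hypothesis), and let $\bar G$ be the Polish group attached to $T|_{\CL_0}$ (for $\CL$ countable one takes $\bar G=\gal_{\KP}(T)$ and no reduction is needed), with $r\colon\gal_{\KP}(T)\to\bar G$ the continuous surjective homomorphism carrying $h_{\FC'}$ to the Haar measure $h_{\bar G}$. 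Using that non-forking over $\emptyset$ passes to reducts and that f-generic extensions are unique, $p|_{\FC'}|_{\CL_0}$ is the f-generic extension of $p|_{\CL_0}$, and one checks $A_{\theta(\bar{x};\bar{b}),p}=r^{-1}(\bar A_p)$ for a Borel (constructible) $\bar A_p\subseteq\bar G$. Two-sided invariance of Haar makes $h_{\FC'}(A_{\theta(\bar{x};\bar{b}),p}\cdot\eta)$ independent of $\eta$, while $\Av(x_1,\dots,x_n;A_{\theta(\bar{x};\bar{b}),p}\cdot\eta)$ depends on $(x_1,\dots,x_n,\eta)$ only through $(r(x_1),\dots,r(x_n),r(\eta))\in\bar G^{n+1}$ and, as $\eta$ varies, takes only finitely many values, governed by which Boolean pattern $\{j\le n:r(x_j)r(\eta)^{-1}\in\bar A_p\}\subseteq\{1,\dots,n\}$ occurs (so finiteness of VC-dimension plays no role here --- it enters only through the conclusion of Fact~\ref{fact: 2.1 from ArtemPierre}). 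Hence the inner supremum over $\eta$ is a maximum, over the finitely many patterns $J$ realized at $(r(x_1),\dots,r(x_n))$, of explicit constants; and for each $J$ the set of $\bar z=(z_1,\dots,z_n)\in\bar G^n$ for which some $w\in\bar G$ has $z_jw^{-1}\in\bar A_p$ exactly for $j\in J$ is the projection to $\bar G^n$ of a Borel subset of the Polish space $\bar G^n\times\bar G$ --- here it is essential that $\bar A_p$ be Borel and not merely analytic, since the condition mixes $\bar A_p$ with its complement --- hence analytic, hence universally measurable. Pulling back along the Borel map $(x_j)_{j\le n}\mapsto(r(x_j))_{j\le n}$, which pushes $h_{\FC'}^{\otimes n}$ forward to $h_{\bar G}^{\otimes n}$, and taking the countable supremum over $p\in K$, gives measurability of $f_n$; the argument for $g_n$ is the same, with pairs of patterns in place of single patterns.

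The main obstacle is the reduction to a countable sublanguage and the verification that it is legitimate: when $\CL$ is uncountable, $\gal_{\KP}(T)$ need not be metrizable, so ``projection of a Borel set is universally measurable'' fails at that level; and, as the footnote preceding the statement stresses, f-genericity of $p$ need not survive passage to an arbitrary sublanguage --- which is precisely why the hypothesis supplies each $p\in K$ with its own good $\CL_0$, and why, when $T|_{\CL_0}$ fails to be $G$-compact, one must route the argument through the Lascar group of the reduct (or the automorphism group of a countable model), as in \cite{HruKruPi}. A secondary but genuine point is that the $\bar A_p$ must be known to be Borel/constructible rather than merely analytic, so that the projections occurring in the treatment of $f_n$ and $g_n$ stay inside the analytic $\sigma$-algebra.
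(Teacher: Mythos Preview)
Your plan is correct and follows essentially the same route as the paper: verify Borelness of each $A_{\theta(\bar x;\bar b),p}$ via Lemma~\ref{lemma: rho is Borel}, reduce the suprema to a single $p\in K$ by countability, pass along a continuous epimorphism to (the image in) the Polish group $\gal_{\KP}(T|_{\CL_0})$, and use that the pattern sets $A_J$ are pullbacks of analytic (hence universally measurable) sets there.

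One phrasing needs correction. You write ``non-forking over $\emptyset$ passes to reducts'' to justify that $p|_{\FC'}|_{\CL_0}$ is f-generic in $T|_{\CL_0}$, but this is precisely the statement that fails in general (your own final paragraph, and the paper's footnote, say so). The right argument --- which the paper spells out --- is: by hypothesis $p|_{\CL_0}$ is f-generic in $T|_{\CL_0}$, so by Lemma~\ref{lemma: f-generic extension} applied in $T|_{\CL_0}$ its unique $M$-invariant extension $(p|_{\CL_0})|_{\FC'}$ is f-generic; and since $(p|_{\FC'})|_{\CL_0}$ is also an $M$-invariant extension of $p|_{\CL_0}$, the two coincide. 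This is what gives $A_{\theta(\bar x;\bar b),p}=r^{-1}(\bar A_p)$ (equivalently $P^{-1}[P[A_{\theta(\bar x;\bar b),p}]]=A_{\theta(\bar x;\bar b),p}$ in the paper's notation). Your remark about routing through the Lascar group when $T|_{\CL_0}$ is not $G$-compact is not needed here: the argument only uses $\gal_{\KP}$, and under NIP (which the reduct inherits) f-generic types are $\autf_{\KP}$-invariant by Remark~\ref{rem: 4 properties}.
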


\begin{proof}
    We already discussed the Borelness of sets of the form $A_{\theta(\bar{x};\bar{b}),p}$ (cf.\ the proof of Lemma~\ref{lemma: rho is Borel} for $p|_{\FC'}$ which is f-generic by Lemma~\ref{lemma: f-generic extension}).
    This is enough to deduce the first point of the assumptions of Fact~\ref{fact: 2.1 from ArtemPierre}.

    Let us argue for the second point of the assumptions of Fact~\ref{fact: 2.1 from ArtemPierre}.
    Take $n<\omega$, we need to show that the map
    $f_n\colon  \gal_{\KP}(T)^{\times n}\to[0,1]$,
    \begin{IEEEeqnarray*}{rCl}
        f_n(g_1,\ldots,g_n) &=& \sup\limits_{F\in\CF_{\theta(\bar{x};\bar{b}),K}}|\Av(g_1,\ldots,g_n;F)-h_{\FC'}(F)| \\
        &=& \sup\limits_{p\in K}\;\sup\limits_{\tau\in\aut(\FC')}
        \Big|\Av\big(g_1,\ldots,g_n;A_{\theta(\bar{x};\bar{b}),p}\cdot\tau\autf_{\KP}(\FC')\big)-h_{\FC'}(A_{\theta(\bar{x};\bar{b}),p})\Big|
    \end{IEEEeqnarray*}
    is measurable. Because $K$ is countable, it is enough to show that
    $$\sup\limits_{\tau\in\aut(\FC')}
        \Big|\Av\big(g_1,\ldots,g_n;A_{\theta(\bar{x};\bar{b}),p}\cdot\tau\autf_{\KP}(\FC')\big)-h_{\FC'}(A_{\theta(\bar{x};\bar{b}),p})\Big|$$
    is a measurable map for every $p\in K$, which will follow if we can show that for a fixed $J\subseteq\{1,\ldots,n\}$ the set
    \begin{IEEEeqnarray*}{rCl}
        A_J:=\{(g_1,\ldots,g_n)\in\gal_{\KP}(T)^{\times n} &\,:\,&
    (\exists\tau\in\aut(\FC')) \\
    & & (g_j\in A_{\theta(\bar{x};\bar{b}),p}\cdot\tau\autf_{\KP}(\FC')\;\iff\; j\in J)\}
    \end{IEEEeqnarray*}
    is measurable. To do it we will pass to a Polish group via a continuous epimorphism.

    Let $\CL_0$ be a countable sublanguage of $\CL$ containing 
    $\theta(\bar{x};\bar{y})$ given in the assumptions of this lemma for $p$.    
    Set $T_0$ to be the reduct of $T$ to $\CL_0$.
    We can assume that $\FC'$ is also a monster model for $T_0$, see
    Remark~\ref{remark:super_monster}.
    The formula $\sigma\autf_{\KP}(\FC')\mapsto \sigma\autf_{
    \KP}(\FC'|_{\CL_0})$ defines a continuous homomorphism $P\colon \gal_{\KP}(T)\to\gal_{\KP}(T_0)$ (follows immediately from the fact that every type-definable set in $\FC'|_{\CL_0}$ is type-definable in $\FC'$).
    Consider a compact subgroup $\CG:=P[\gal_{\KP}(T)]\leqslant\gal_{\KP}(T_0)$, which is a Polish group since $\gal_{\KP}(T_0)$ is a Polish group.
    
    Note that the normalized Haar measure on $\CG$ is the pushforward of the Haar measure on $\gal_{\KP}(T)$.
    Indeed,
    note that the pushforward of the Haar measure is clearly invariant, and it is easy to check (using compactness) that it is inner regular, which by finiteness implies that it is also outer regular, and the conclusion follows by the uniqueness of the Haar measure.

    The preimage (via $P$) of a Haar-null set in $\CG$ is Haar-null in $\gal_{\KP}(T)$ and the preimage of a Haar-measurable set in $\CG$ is Haar-measurable in $\gal_{\KP}(T)$.
    The first part follows because a null set is contained in a null $G_\delta$ set. 
    Then, regularity implies that a measurable set is the symmetric difference of a Borel set and a null set. As preimages preserve symmetric difference, the second part follows.

    Now, we need to show that $P^{-1}[P[A_{\theta(\bar{x};\bar{b}),p}]]=A_{\theta(\bar{x};\bar{b}),p}$.
    For this, assume that $\sigma,\tau\in\aut(\FC')$ are such that 
    $P(\sigma\autf_{\KP}(\FC'))=P(\tau\autf_{\KP}(\FC'))$, i.e.
    $\sigma^{-1}\tau\in\autf_{\KP}(\FC'|_{\CL_0})$.
    If $\sigma\autf_{\KP}(\FC')\in A_{\theta(\bar{x};\bar{b}),p}$
    then $\sigma\cdot p|_{\FC'}\in[\theta(\bar{x};\bar{b})]$.
    We have that $p|_{\CL_0}$ is an f-generic in $T_0$ 
    and so $(p|_{\CL_0})|_{\FC'}=(p|_{\FC'})|_{\CL_0}$ is an f-generic in $T_0$ (by Lemma~\ref{lemma: f-generic extension}). Consequently,
    $(p|_{\CL_0})_{\FC'}$ is $\autf_{\KP}(\FC'|_{\CL_0})$-invariant
    and we obtain
    \[ \sigma\big( (p|_{\CL_0})|_{\FC'}\big)= 
    \sigma\Big(\sigma^{-1}\tau\big( (p|_{\CL_0})|_{\FC'}\big)\Big)
    =\tau\big( (p|_{\CL_0})|_{\FC'}\big).\]
    Hence,
    \begin{IEEEeqnarray*}{rCl}
        \sigma\cdot p|_{\FC'}\in[\theta(\bar{x};\bar{b})] &\iff &
        \sigma\big((p|_{\FC'})|_{\CL_0}\big) \in[\theta(\bar{x};\bar{b})] \\
        & \iff & \sigma\big( (p|_{\CL_0})|_{\FC'}\big) \in[\theta(\bar{x};\bar{b})] \\
        & \iff & \tau\big( (p|_{\CL_0})|_{\FC'}\big) \in[\theta(\bar{x};\bar{b})] \\
        &\iff & \tau\big((p|_{\FC'})|_{\CL_0}\big) \in[\theta(\bar{x};\bar{b})] \\
        &\iff & \tau\cdot p|_{\FC'}\in[\theta(\bar{x};\bar{b})]
    \end{IEEEeqnarray*}
    Therefore, if $\sigma\autf_{\KP}(\FC')\in A_{\theta(\bar{x};\bar{b}),p}$ then
    $\tau\autf_{\KP}(\FC')\in A_{\theta(\bar{x};\bar{b}),p}$ and 
    $P^{-1}[P[A_{\theta(\bar{x};\bar{b}),p}]]\subseteq A_{\theta(\bar{x};\bar{b}),p}$.
    The other inclusion follows by definition.

    Note that, by $P^{-1}[P[A_{\theta(\bar{x};\bar{b}),p}]]=A_{\theta(\bar{x};\bar{b}),p}$, we have that $g_j\in A_{\theta(\bar{x};\bar{b}),p}\cdot \tau$ if and only if $P(g_j)\in P[A_{\theta(\bar{x};\bar{b}),p}]\cdot P(\tau)$, and hence
    \[ P[A_J]=\{ (h_1,\ldots,h_n)\in\CG^{\times n}:(\exists h\in\CG)\big( h_jh^{-1}\in P[A_{\theta(\bar{x};\bar{b}),p}]\,\iff\,j\in J \big)\} \]
    Then using again $P^{-1}[P[A_{\theta(\bar{x};\bar{b}),p}]]=A_{\theta(\bar{x};\bar{b}),p}$,
    we conclude that $P^{-1}[P[A_J]]=A_J$. Thus we are done with this part of the proof if we can show that $P[A_J]$ is Haar-measurable in $\CG$. This is our next step.

    Consider the set
    \[ A^{\CL_0}_{\theta(\bar{x};\bar{b}),p|_{\CL_0}}=\{\tau\autf_{\KP}(\FC'|_{\CL_0})\in\gal_\KP(T_0)\;:\;\tau\big(
    (p|_{\CL_0})|_{\FC'}\big)\in[\theta(\bar{x};\bar{b})]\}\]
    which is Borel since $p|_{\CL_0}$ is f-generic (even constructible, cf.\ the proof of Lemma~\ref{lemma: rho is Borel} or the discussion after Fact 2.23 in \cite{HruKruPi}). We have (using $(p|_{\CL_0})|_{\FC'}=(p|_{\FC'})|_{\CL_0}$) that $P[A_{\theta(\bar{x};\bar{b}),p}]=\CG\cap A^{\CL_0}_{\theta(\bar{x};\bar{b}),p|_{\CL_0}}$ and so $P[A_{\theta(\bar{x};\bar{b}),p}]$ is a Borel subset of $\CG$.
    It is not hard to see that $P[A_J]$ is the image under projection $\CG^{\times(n+1)}\to\CG^{\times n}$ onto the last $n$ coordinates of the set
        \begin{IEEEeqnarray*}{rCl}
        B_J &:=& \bigcap\limits_{j\in J}\{(h,h_1,\ldots,h_n)\in\CG^{\times(n+1)}\;:\;h_jh^{-1}\in P[A_{\theta(\bar{x};\bar{b}),p}]\} \\
        &\cap & \bigcap\limits_{j\not\in J}\{(h,h_1,\ldots,h_n)\in\CG^{\times(n+1)}\;:\; h_jh^{-1}\not\in P[A_{\theta(\bar{x};\bar{b}),p}]\}.
    \end{IEEEeqnarray*}
    The set $B_J$ is a Borel subset of $\CG^{\times(n+1)}$ because
    $P[A_{\theta(\bar{x};\bar{b}),p}]$ is Borel and the group operations in $\CG$ are continuous. 
    Thus $P[A_J]$ is an analytic subset of Polish group $\CG$ and as such, $P[A_J]$ is universally measurable (by Theorem 29(7) in \cite{Kechris95}), in particular $P[A_J]$ is Haar-measurable as expected. 

    In a similar way we can show that, for every $n<\omega$,
    the map $g_n\colon \gal_{\KP}(T)^{\times 2n}\to[0,1]$ from the third point of
    the assumptions of Fact~\ref{fact: 2.1 from ArtemPierre} is measurable. 
\end{proof}

From now until Remark~\ref{rem: Ellis stable 1}, we assume that 
\textbf{the language $\CL$ is countable}.

\begin{proposition}\label{prop: approximation 1}
    Let 
    $\epsilon>0$, $\theta(\bar{x};\bar{y})\in\CL$
    and $\bar{b}\in\FC^{\bar{y}}$,
    and let
    $K\subseteq I$ be countable.
    Then there exist $\tau_1,\ldots,\tau_n\in\aut(\FC')$ such that for every $p\in K$ and every $q\in S$
    we have
    \begin{IEEEeqnarray*}{rCl}
    \mu_{p}(\theta(\bar{x};\bar{b}))=\mu_{p\ast q}(\theta(\bar{x};\bar{b})) &\approx_{\epsilon}&
    \Av\Big(j;\; \theta\big(\bar{x};\tau_j^{-1}(\bar{b})\big)\in p\ast q|_{\FC'} \Big) \\
    &=& \frac{1}{n}\Big|\{j\leqslant n: \theta\big(\bar{x};\tau_j^{-1}(\bar{b})\big)\in p\ast q|_{\FC'}\}\Big|.
    \end{IEEEeqnarray*}
    \end{proposition}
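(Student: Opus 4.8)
The plan is to push everything into a single uniform sampling statement inside the compact group $\gal_{\KP}(T)$ with its Haar measure $h_{\FC'}$, and then invoke Fact~\ref{fact: 2.1 from ArtemPierre}. The point is that the family $\CF_{\theta(\bar{x};\bar{b}),K}$ was set up precisely so that all the sets whose Haar measure we need to approximate -- the sets $A_{\theta(\bar{x};\bar{b}),p\ast q}$ for $p\in K$, $q\in S$ -- already belong to it, so that one finite sample handles every such set at once. First I would record two cheap reductions. Since $p\in K\subseteq I=\fGen$ and $q\in S$, Corollary~\ref{cor: orbit of f-generic} gives $p\ast q\in\Aut(\FC)\cdot p=p\ast S\subseteq\fGen$ (so $\mu_{p\ast q}$ is defined); and writing $p\ast q=\sigma\cdot p$ with $\sigma\in\aut(\FC)$ we get $(p\ast q)\ast S=\Aut(\FC)\cdot(p\ast q)=\Aut(\FC)\cdot p=p\ast S$, so $\mu_{p\ast q}=\mu_p$ directly from Definition~\ref{def: almost pullback} -- this is the first equality in the statement. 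I would also record that for every $r\in I$ one has $\mu_r(\theta(\bar{x};\bar{b}))=h_{\FC'}\big(A_{\theta(\bar{x};\bar{b}),r}\big)$; this is essentially the computation in the proof of Lemma~\ref{lemma: mu_p is inv Keisler measure}, using that $r|_{\FC'}$ is $\autf_{\KP}(\FC')$-invariant (Lemma~\ref{lemma: f-generic extension}) and that $r_{\FC}$ carries $h_{\FC'}$ to $h_{\FC}$.

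Next I would identify $A_{\theta(\bar{x};\bar{b}),p\ast q}$ as a right translate of $A_{\theta(\bar{x};\bar{b}),p}$. Writing $q=\tp(\tau(\bar{n})/\FC)$ with $\tau\in\aut(\FC')$, Lemma~\ref{lemma: cdot vs ast} gives $(p\ast q)|_{\FC'}=\tau\cdot p|_{\FC'}$, so for $\sigma\in\aut(\FC')$ we have $\sigma\cdot(p\ast q)|_{\FC'}\in[\theta(\bar{x};\bar{b})]$ if and only if $\sigma\tau\cdot p|_{\FC'}\in[\theta(\bar{x};\bar{b})]$, i.e.
\[A_{\theta(\bar{x};\bar{b}),p\ast q}=A_{\theta(\bar{x};\bar{b}),p}\cdot\tau^{-1}\autf_{\KP}(\FC')\in\CF_{\theta(\bar{x};\bar{b}),K}.\]
Then I would apply Fact~\ref{fact: 2.1 from ArtemPierre} to the probability space $(\gal_{\KP}(T),h_{\FC'})$ and the family $\CF_{\theta(\bar{x};\bar{b}),K}$: by Lemma~\ref{lemma: F finite VC dimension} it has finite VC dimension, and since $\CL$ is countable (so each $p\in K$ is trivially f-generic in $T|_{\CL}=T$, which is the hypothesis needed in Lemma~\ref{lemma: F satisfies assumptions of Fact 2.1}) and $K$ is countable, Lemma~\ref{lemma: F satisfies assumptions of Fact 2.1} supplies the measurability hypotheses. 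Fact~\ref{fact: 2.1 from ArtemPierre}, applied with $k$ the VC dimension and the given $\epsilon$, then yields $g_1,\dots,g_n\in\gal_{\KP}(T)$, say $g_j=\tau_j\autf_{\KP}(\FC')$, with $h_{\FC'}(F)\approx_{\epsilon}\Av(g_1,\dots,g_n;F)$ for every $F\in\CF_{\theta(\bar{x};\bar{b}),K}$.

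Finally, for fixed $p\in K$ and $q\in S$, combining the above I would conclude $\mu_p(\theta(\bar{x};\bar{b}))=\mu_{p\ast q}(\theta(\bar{x};\bar{b}))=h_{\FC'}\big(A_{\theta(\bar{x};\bar{b}),p\ast q}\big)\approx_{\epsilon}\Av\big(g_1,\dots,g_n;A_{\theta(\bar{x};\bar{b}),p\ast q}\big)$, and since $g_j\in A_{\theta(\bar{x};\bar{b}),p\ast q}$ exactly when $\tau_j\cdot(p\ast q)|_{\FC'}\in[\theta(\bar{x};\bar{b})]$, i.e.\ when $\theta(\bar{x};\tau_j^{-1}(\bar{b}))\in p\ast q|_{\FC'}$, this last average equals $\frac1n\big|\{j\leqslant n:\theta(\bar{x};\tau_j^{-1}(\bar{b}))\in p\ast q|_{\FC'}\}\big|$, which is what was claimed.

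The hard work -- finite VC dimension and the measurability of the family -- is already isolated in Lemmas~\ref{lemma: F finite VC dimension} and~\ref{lemma: F satisfies assumptions of Fact 2.1}, so for this proposition the main obstacle is really the bookkeeping: getting the translation identity $A_{\theta(\bar{x};\bar{b}),p\ast q}=A_{\theta(\bar{x};\bar{b}),p}\cdot\tau^{-1}\autf_{\KP}(\FC')$ right, and keeping the passages between $\aut(\FC)/\autf_{\KP}(\FC)$ and $\aut(\FC')/\autf_{\KP}(\FC')$ (via $r_{\FC}$) and between $\mu_p$ and $h_{\FC'}(A_{\theta(\bar{x};\bar{b}),p})$ consistent -- these are the places where an inverse or a side gets flipped.
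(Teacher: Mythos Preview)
Your proof is correct and follows essentially the same route as the paper: apply Fact~\ref{fact: 2.1 from ArtemPierre} to the family $\CF_{\theta(\bar{x};\bar{b}),K}$ on $(\gal_{\KP}(T),h_{\FC'})$ via Lemmas~\ref{lemma: F finite VC dimension} and~\ref{lemma: F satisfies assumptions of Fact 2.1}, then translate the resulting average back using $(p\ast q)|_{\FC'}=\tau\cdot p|_{\FC'}$ and the identity $\mu_r(\theta(\bar{x};\bar{b}))=h_{\FC'}(A_{\theta(\bar{x};\bar{b}),r})$. Your packaging is slightly cleaner in one respect: you first observe $A_{\theta(\bar{x};\bar{b}),p\ast q}=A_{\theta(\bar{x};\bar{b}),p}\cdot\tau^{-1}\autf_{\KP}(\FC')\in\CF_{\theta(\bar{x};\bar{b}),K}$ and deduce $\mu_p=\mu_{p\ast q}$ directly from $(p\ast q)\ast S=p\ast S$, whereas the paper works with the translates $A_{\theta(\bar{x};\bar{b}),p}\cdot\sigma^{-1}$ throughout and recovers both equalities from a longer chain through $\rho$; the content is the same. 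One minor citation point: the identity $(p\ast q)|_{\FC'}=\tau\cdot p|_{\FC'}$ is really Proposition~\ref{prop: alter_ast} (plus $\autf_{\KP}(\FC')$-invariance of $p|_{\FC'}$ to identify the $M$-invariant extension), and appears only inside the proof, not the statement, of Lemma~\ref{lemma: cdot vs ast}.
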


\begin{proof}
    By Lemma~\ref{lemma: F satisfies assumptions of Fact 2.1}, there exist $\tau_1,\ldots,\tau_n\in\aut(\FC')$ such that for every $\sigma\in\aut(\FC')$ and every $p\in K$ we have
    \[h_{\FC'}\big( A_{\theta(\bar{x};\bar{b}),p}\cdot\sigma^{-1}\autf_{\KP}(\FC')\big) \approx_{\epsilon} \Av\Big(j;\;\tau_j\autf_{\KP}(\FC')\in A_{\theta(\bar{x};\bar{b}),p}\cdot\sigma^{-1}\autf_{\KP}(\FC') \Big).\]
    Note that    
    \[\tau_j\autf_{\KP}(\FC')\in A_{\theta(\bar{x};\bar{b}),p}\cdot\sigma^{-1}\autf_{\KP}(\FC')\quad\iff\quad \sigma\cdot p|_{\FC'}\ni\theta(\bar{x};\tau_j^{-1}(\bar{b})),\]
    \[h_{\FC'}\big( A_{\theta(\bar{x};\bar{b}),p}\cdot\sigma^{-1}\autf_{\KP}(\FC')\big) = h_{\FC'}( A_{\theta(\bar{x};\bar{b}),p}) \]
    and so
    \begin{equation}\label{equation1}\tag{$\dagger$}
        h_{\FC'}( A_{\theta(\bar{x};\bar{b}),p}) \approx_{\epsilon} \Av\Big(j;\; \sigma\cdot p|_{\FC'}\ni\theta\big(\bar{x};\tau_j^{-1}(\bar{b})\big) \Big),
    \end{equation}
    for every $\sigma\in\aut(\FC')$ and every $p\in K$.

    Let $q\in S$ with $q=\tp(\sigma(\bar{n})/\FC)$ for some $\sigma\in\aut(\FC')$.
    Then $p\ast q= (\sigma\cdot p|_{\FC'})|_{\FC}$, so $\sigma\cdot p|_{\FC'}=p\ast q|_{\FC'}$ (i.e.\ the unique $M$-invariant extension of $p\ast q$ over $\FC'$)
    and
    \begin{equation}\label{equation2}\tag{$\dagger\dagger$}
    \Av\Big(j;\; \sigma\cdot p|_{\FC'}\ni\theta\big(\bar{x}:\tau_j^{-1}(\bar{b})\big) \Big) = \Av\big(j;\; p\ast q|_{\FC'}\ni\theta(\bar{x};\tau_j^{-1}(\bar{b})) \big).
    \end{equation}

    Now let us take care of the left-hand side, we have by Corollary~\ref{cor: orbit of f-generic}:
    \begin{IEEEeqnarray*}{rCl}
        h_{\FC'}(A_{\theta(\bar{x};\bar{b}),p}) &=& h_{\FC'}\Big(\big\{\tau\autf_{\KP}(\FC')\;:\;\theta(\bar{x};\bar{b})\in \tau\cdot p|_{\FC'}\big\}\Big) \\
        &=& h_{\FC'}\Big(\big\{\tau^{-1}\autf_{\KP}(\FC')\;:\;\theta(\bar{x};\bar{b})\in \tau\cdot p|_{\FC'}\big\}\Big) \\
        &=& h_{\FC'}\big( \big\{ \rho(r)\;:\;r\in S,\,p\ast r\in[\theta(\bar{x};\bar{b})]\big\}\Big) \\
        &=& h_{\FC'}\big( \big\{ \rho(p)^{-1}\rho(p\ast r)\;:\;r\in S,\,p\ast r\in[\theta(\bar{x};\bar{b})]\big\}\Big) \\
        &=& h_{\FC'}\Big( \rho(p)^{-1}\rho\big[ [\theta(\bar{x};\bar{b})]\,\cap\, p\ast S \big] \Big) \\
         &=& h_{\FC'}\Big( \rho\big[ [\theta(\bar{x};\bar{b})]\,\cap\, p\ast S \big] \Big) = \mu_{p}(\theta(\bar{x};\bar{b})),
    \end{IEEEeqnarray*}
    and since $p\ast q\ast S=p\ast S$  (because $p\ast S$ is a minimal right ideal, by Theorem~\ref{thm: fGen and injectivity on Ellis groups})
    we have
    \begin{equation}\label{equation3}\tag{$\dagger\dagger\dagger$}
         h_{\FC'}(A_{\theta(\bar{x};\bar{b}),p})= h_{\FC'}\Big( \rho\big[ [\theta(\bar{x};\bar{b})]\,\cap\, p\ast q\ast S \big] \Big)= \mu_{p\ast q}(\theta(\bar{x};\bar{b})).
    \end{equation}
    Putting \eqref{equation1},\eqref{equation2}, \eqref{equation3} together, we obtain
        $$\mu_{p}(\theta(\bar{x};\bar{b})) = \mu_{p\ast q}(\theta(\bar{x};\bar{b}))\approx_{\epsilon}
    \Av\Big(j;\; \theta\big((\bar{x};\tau_j^{-1}(\bar{b})\big)\in p\ast q|_{\FC'} \Big),$$
    as expected.
\end{proof}

\begin{proposition}\label{prop: uniqueness on closure}
    If $p\in I$ and $q\in\overline{p\ast S}$ then $\mu_p=\mu_q$.
\end{proposition}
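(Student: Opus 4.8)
The plan is to reduce the claim to a single formula and then apply the uniform approximation of Proposition~\ref{prop: approximation 1} twice: once to $p$ along a net approaching $q$, and once to $q$ via a right-absorbing idempotent. First, note that $q$ lies in $I$: by Theorem~\ref{thm: fGen and injectivity on Ellis groups} the set $I=\fGen$ is the unique minimal left ideal, hence a closed two-sided ideal, so $p\ast S\subseteq I$ and $q\in\overline{p\ast S}\subseteq I$ (in particular $\mu_q$ is defined). Fix $\theta(\bar{x};\bar{y})\in\CL$, $\bar{b}\in\FC^{\bar{y}}$ and $\epsilon>0$; it suffices to show $|\mu_p(\theta(\bar{x};\bar{b}))-\mu_q(\theta(\bar{x};\bar{b}))|<2\epsilon$. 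Applying Proposition~\ref{prop: approximation 1} with the (finite, hence countable) set $K=\{p,q\}$ produces $\tau_1,\dots,\tau_n\in\aut(\FC')$ such that, for every $r\in\{p,q\}$ and every $s\in S$,
\[
\mu_r\big(\theta(\bar{x};\bar{b})\big)=\mu_{r\ast s}\big(\theta(\bar{x};\bar{b})\big)\approx_{\epsilon}\Av\Big(j\leqslant n\;;\;\theta\big(\bar{x};\tau_j^{-1}(\bar{b})\big)\in r\ast s|_{\FC'}\Big).
\]

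For $q$, Theorem~\ref{thm: fGen and injectivity on Ellis groups} gives that $q\ast S=\aut(\FC)\cdot q$ is an Ellis group, and (as in the proof of that theorem) its idempotent $u\in S$ satisfies $q\ast u=q$, whence $(q\ast u)|_{\FC'}=q|_{\FC'}$; substituting $r=q$, $s=u$ into the display yields $\mu_q(\theta(\bar{x};\bar{b}))\approx_{\epsilon}\Av\big(j\leqslant n;\;\theta(\bar{x};\tau_j^{-1}(\bar{b}))\in q|_{\FC'}\big)$. For $p$, pick a net $(s_i)$ in $S$ with $p\ast s_i\to q$ in $S^{\inv}_{\bar{n}}(\FC,M)$, which exists since $q\in\overline{p\ast S}$. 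The assignment $r\mapsto r|_{\FC'}$ is the inverse of the restriction map $S^{\inv}_{\bar{n}}(\FC',M)\to S^{\inv}_{\bar{n}}(\FC,M)$, which is a continuous bijection between compact Hausdorff spaces (each being a closed subspace of the corresponding type space) and therefore a homeomorphism; hence $p\ast s_i|_{\FC'}\to q|_{\FC'}$ in $S_{\bar{n}}(\FC')$. Since each $[\theta(\bar{x};\tau_j^{-1}(\bar{b}))]$ is clopen, for all sufficiently large $i$ (uniformly over the finitely many $j\leqslant n$) we get $\theta(\bar{x};\tau_j^{-1}(\bar{b}))\in p\ast s_i|_{\FC'}$ iff $\theta(\bar{x};\tau_j^{-1}(\bar{b}))\in q|_{\FC'}$, so the corresponding averages are equal; combining this with the display for $r=p$, $s=s_i$ gives $\mu_p(\theta(\bar{x};\bar{b}))\approx_{\epsilon}\Av\big(j;\;\theta(\bar{x};\tau_j^{-1}(\bar{b}))\in q|_{\FC'}\big)\approx_{\epsilon}\mu_q(\theta(\bar{x};\bar{b}))$. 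As $\epsilon>0$ and $(\theta,\bar{b})$ were arbitrary, $\mu_p=\mu_q$.

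The two steps I expect to require care are: transferring the convergence $p\ast s_i\to q$ over $\FC$ to convergence of the canonical $M$-invariant extensions over $\FC'$ (handled by the compact Hausdorff homeomorphism argument above), and obtaining the $q$-side of the approximation with no trailing factor, which is precisely why one picks the idempotent $u$ with $q\ast u=q$; beyond these, the argument is bookkeeping with the single family $\tau_1,\dots,\tau_n$ supplied by Proposition~\ref{prop: approximation 1}.
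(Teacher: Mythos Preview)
Your proof is correct and follows essentially the same strategy as the paper: apply Proposition~\ref{prop: approximation 1} to $K=\{p,q\}$, reduce the $q$-side to the average over $q|_{\FC'}$ itself, and match it with the $p$-side via a suitable $t$ (or $s_i$) coming from $q\in\overline{p\ast S}$. The only differences are tactical: the paper obtains $q\ast r=q$ by choosing any $r$ extending $\tp(\id_{\FC}(\bar m)/M)$ and invoking Lemma~\ref{lemma: cdot vs ast} (slightly lighter than your appeal to the Ellis-group idempotent), and on the $p$-side the paper pulls each $\tau_j^{-1}(\bar b)$ back into $\FC$ via $M$-invariance to find a single $t$, whereas you push the convergence forward to $\FC'$ via the homeomorphism $S^{\inv}_{\bar n}(\FC,M)\cong S^{\inv}_{\bar n}(\FC',M)$ --- both routes are valid and equally short.
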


\begin{proof}
    Let $\theta(\bar{x};\bar{y})\in\CL$, $\bar{b}\in\FC^{\bar{y}}$ and $\epsilon>0$.
    By Proposition~\ref{prop: approximation 1} applied to set $K=\{p,q\}$,
    we obtain
    $\tau_1,\ldots,\tau_n\in\aut(\FC')$ such that for every $r\in S$,
    \begin{IEEEeqnarray*}{rCl}
        \mu_{p}(\theta(\bar{x};\bar{b})) &=& \mu_{p\ast r}(\theta(\bar{x};\bar{b}))\approx_{\epsilon}
    \Av\Big(j;\; \theta\big((\bar{x};\tau_j^{-1}(\bar{b})\big)\in p\ast r|_{\FC'} \Big), \\
    \mu_{q}(\theta(\bar{x};\bar{b})) &=& \mu_{q\ast r}(\theta(\bar{x};\bar{b}))\approx_{\epsilon}
    \Av\Big(j;\; \theta\big((\bar{x};\tau_j^{-1}(\bar{b})\big)\in q\ast r|_{\FC'} \Big).
    \end{IEEEeqnarray*}
Let $r\in S$ extend $\tp(\id_{\FC}(\bar{m})/M)$. Then $q\ast r=\id_{\FC}\cdot q=q$ (by Lemma~\ref{lemma: cdot vs ast}), and so also
$$\mu_q(\theta(\bar{x};\bar{b}))\approx_{\epsilon}
\Av\Big(j;\; \theta\big((\bar{x};\tau_j^{-1}(\bar{b})\big)\in q|_{\FC'} \Big).$$

Let $\bar{y}_0$ be a finite subtuple of $\bar{y}$ of all variables among $\bar{y}$ occuring in $\theta(\bar{x};\bar{y})$, i.e.\ $\theta(\bar{x};\bar{y})=\theta(\bar{x};\bar{y}_0)$.
Consider $\bar{b}_0:=\bar{b}|_{\bar{y}_0}$, and for each $j\leqslant n$ take
$\bar{d}_j\in \FC^{\bar{y}_0}$ such that $\bar{d}_j\equiv_M\tau^{-1}_j(\bar{b}_0)$.
Because $q\in \overline{p\ast S}$, there exist $t\in S$ such that for every $j\leqslant n$ we have
\[\theta(\bar{x};\bar{d}_j)\in q\quad\iff\quad \theta(\bar{x};\bar{d}_j)\in p\ast t.\]
Hence, for every $j\leqslant n$ it is
\[\theta(\bar{x};\tau_j^{-1}(\bar{b}))\in q|_{\FC'}\quad\iff\quad \theta(\bar{x};\tau_j^{-1}(\bar{b}))\in p\ast t|_{\FC'}.\]
It follows that
\begin{IEEEeqnarray*}{rCl}
    \mu_q(\theta(\bar{x};\bar{b})) &\approx_{\epsilon}& \Av\Big(j;\; \theta\big((\bar{x};\tau_j^{-1}(\bar{b})\big)\in q|_{\FC'} \Big) \\
    &=& \Av\Big(j;\; \theta\big((\bar{x};\tau_j^{-1}(\bar{b})\big)\in p\ast t|_{\FC'} \Big) \\
    &\approx_{\epsilon}& \mu_{p\ast t}(\theta(\bar{x};\bar{b})) = \mu_p(\theta(\bar{x};\bar{b})).
\end{IEEEeqnarray*}
Since $\epsilon>0$ was arbitrary, we conclude that $\mu_q(\theta(\bar{x};\bar{b}))=\mu_p(\theta(\bar{x};\bar{b}))$ for every $\theta(\bar{x};\bar{y})\in\CL$ and $\bar{b}\in\FC^{\bar{y}}$.
\end{proof}

\subsection{Ergodic decomposition}

\begin{lemma}\label{lemma: approximation 2}
    Let $\mu\in\mathfrak{M}^{\inv}_{\bar{n}}(\FC,\emptyset)$, $\epsilon>0$ and $\theta(\bar{x};\bar{y})\in\CL$.
    Then there exist $p_1,\ldots,p_n\in\supp(\mu)\subseteq I$ such that for every $\bar{b}\in\FC^{\bar{y}}$ we have
    \[\mu(\theta(\bar{x};\bar{b}))\approx_{\epsilon}\frac{1}{n}\sum\limits_{i\leqslant n}\mu_{p_i}(\theta(\bar{x};\bar{b})).\]
\end{lemma}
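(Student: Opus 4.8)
The plan is to reduce this to a single application of Fact~\ref{fact: 2.1 from ArtemPierre}, applied this time not to the Galois group but to the probability space $(\supp(\mu),\mu)$ itself, using the family of sets
\[
\CF=\{\,\{p\in\supp(\mu)\;:\;\theta(\bar x;\tau^{-1}(\bar b))\in p|_{\FC'}\}\;:\;\tau\in\aut(\FC'),\ \bar b\in\FC^{\bar y}\,\}.
\]
First I would observe that $\supp(\mu)\subseteq I=\fGen$ by Proposition~\ref{prop: support of f-generics} (since $\mu$ is $\aut(\FC)$-invariant), so each $p\in\supp(\mu)$ is f-generic and has a well-defined $M$-invariant extension $p|_{\FC'}$ with $\rho|_{p\ast S}$ injective. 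The sets in $\CF$ are then, up to the identification $p\leftrightarrow\rho(p)$ on each Ellis group, translates of the sets $A_{\theta(\bar x;\bar b),p}$ already analysed, so the finite-VC-dimension argument of Lemma~\ref{lemma: F finite VC dimension} applies verbatim: a shattered set of size $n$ produces parameters $\bar a_J\models\tau_J^{-1}\cdot p_J|_{\FC'}$ and $\bar b_j=\sigma_j^{-1}(\bar b)$ with $j\in J\iff\models\theta(\bar a_J;\bar b_j)$, bounded by $\mathrm{VC}(\theta^{\mathrm{opp}})$. The measurability hypotheses (1)--(3) of Fact~\ref{fact: 2.1 from ArtemPierre} for $(\supp(\mu),\mu)$ follow from Lemma~\ref{lemma: F satisfies assumptions of Fact 2.1} together with the (now standing) countability of $\CL$: the maps $p\mapsto\mu_p(\theta(\bar x;\bar b))=h_{\FC'}(A_{\theta(\bar x;\bar b),p})$ and the analogous averaging functions are Borel, as $\gal_{\KP}(T)$ is Polish and the relevant sets are analytic, hence universally measurable, so their $\mu$-measures are defined.

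Next I would apply Fact~\ref{fact: 2.1 from ArtemPierre} with the given $\epsilon>0$ and $k=\mathrm{VC}(\theta^{\mathrm{opp}})$ to obtain $N<\omega$, and then points $p_1,\dots,p_N\in\supp(\mu)$ such that for every $F\in\CF$,
\[
\mu(F)\approx_\epsilon\frac1N\bigl|\{i\leqslant N\;:\;p_i\in F\}\bigr|.
\]
Specializing $F$ to the set determined by $\tau=\id$ and a fixed $\bar b$, the left side is $\mu(\theta(\bar x;\bar b))$ (since $p\mapsto\theta(\bar x;\bar b)\in p|_{\FC'}$ just records $\theta(\bar x;\bar b)\in p$, as $p|_{\FC'}$ extends $p$). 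The right side is then rewritten using the identity $\mu_{p_i}(\theta(\bar x;\bar b))=h_{\FC'}(A_{\theta(\bar x;\bar b),p_i})$ from the computation inside the proof of Proposition~\ref{prop: approximation 1}, combined with the $\epsilon$-approximation $(\dagger)$ there: for the $\tau_1,\dots,\tau_n$ produced by Lemma~\ref{lemma: F satisfies assumptions of Fact 2.1} applied to $K=\{p_1,\dots,p_N\}$ we have $\mu_{p_i}(\theta(\bar x;\bar b))\approx_\epsilon\Av(j;\theta(\bar x;\tau_j^{-1}(\bar b))\in p_i|_{\FC'})$, and averaging over $i$ and swapping the two finite averages gives $\frac1N\sum_{i}\mu_{p_i}(\theta(\bar x;\bar b))\approx_\epsilon\frac1N|\{i:\theta(\bar x;\bar b)\in p_i\}|$ — matching the Fact~\ref{fact: 2.1 from ArtemPierre} output up to the same $\epsilon$. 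Chaining the two approximations (and replacing $\epsilon$ by $\epsilon/2$ throughout) yields the desired conclusion $\mu(\theta(\bar x;\bar b))\approx_\epsilon\frac1n\sum_{i\leqslant n}\mu_{p_i}(\theta(\bar x;\bar b))$ with $n=N$.

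The main obstacle I anticipate is the \emph{uniformity over $\bar b$}: Fact~\ref{fact: 2.1 from ArtemPierre} is designed to produce a single finite sample that works for the whole family $\CF$ simultaneously, so the key point is to set up $\CF$ so that varying $\bar b\in\FC^{\bar y}$ (and $\tau$) stays within one family of bounded VC-dimension — this is exactly what the computation in Lemma~\ref{lemma: F finite VC dimension} guarantees, since the bound there depends only on $\theta$, not on $\bar b$ or $K$. A secondary subtlety is the bookkeeping that the sample points $p_i$ can be chosen in $\supp(\mu)$ (not merely in $I$): this is automatic, because Fact~\ref{fact: 2.1 from ArtemPierre} selects the $x_i$ from the underlying space of the probability measure, which we took to be $\supp(\mu)$. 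One should also take a countable dense $K\subseteq I$ (or just use the finite $K=\{p_1,\dots,p_N\}$ after the fact) to legitimately invoke Lemma~\ref{lemma: F satisfies assumptions of Fact 2.1}, which requires $K$ countable; since only finitely many $p_i$ ultimately appear, this costs nothing.
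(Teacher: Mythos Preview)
Your approach is essentially the paper's: first approximate $\mu$ by an average of Dirac measures at $p_1,\dots,p_n\in\supp(\mu)$ uniformly in $\bar b$ (the paper cites this directly as \cite[Proposition~7.11]{Guide_NIP}, which is precisely your application of Fact~\ref{fact: 2.1 from ArtemPierre} to $(\supp(\mu),\mu)$ --- by $M$-invariance your family $\CF$ is exactly the family of clopen traces $[\theta(\bar x;\bar d)]\cap\supp(\mu)$ for $\bar d\in\FC$), then invoke Proposition~\ref{prop: approximation 1} with $K=\{p_1,\dots,p_n\}$, swap the two finite averages, and use $\aut(\FC')$-invariance of $\mu|_{\FC'}$ to collapse to $\mu(\theta(\bar x;\bar b))$.

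Two small things to tidy up. First, your ``swap'' sentence is not literally correct: swapping gives $\frac1m\sum_j\frac1N\bigl|\{i:\theta(\bar x;\tau_j^{-1}(\bar b))\in p_i|_{\FC'}\}\bigr|$, an average over $j$, not the single count $\frac1N\bigl|\{i:\theta(\bar x;\bar b)\in p_i\}\bigr|$; you must then apply your Fact~\ref{fact: 2.1 from ArtemPierre} approximation once more (each $\tau_j$-term lies in $\CF$) and use invariance of $\mu$ to see each summand is $\approx_\epsilon\mu(\theta(\bar x;\bar b))$ --- exactly the computation the paper writes out. Second, Lemma~\ref{lemma: F satisfies assumptions of Fact 2.1} concerns the Haar space $(\gal_{\KP}(T),h_{\FC'})$, not $(\supp(\mu),\mu)$; measurability for the latter is actually easier (the sets are clopen after the $M$-invariance reduction) and is what makes \cite[Proposition~7.11]{Guide_NIP} go through.
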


\begin{proof}
    By classical results on NIP (e.g.\ Proposition 7.11 in \cite{Guide_NIP}) and Proposition~\ref{prop: support of f-generics},
    there exist $p_1,\ldots,p_n\in\supp(\mu)\subseteq I$ such that for every $\bar{b}\in\FC^{\bar{y}}$ we have
    \[\mu(\theta(\bar{x};\bar{b}))\approx_{\frac{\epsilon}{2}}\Av\Big(p_1,\ldots,p_n;\big[\theta(\bar{x};\bar{b})\big]\Big).\]
    We will show that these $p_1,\ldots,p_n$ work. 
    
    After applying Proposition~\ref{prop: approximation 1} for $K=\{p_1,\ldots,p_n\}$, for every $\bar{b}\in\FC^{\bar{y}}$ there exist $\tau_1,\ldots,\tau_m\in\aut(\FC')$ such that for every $i\leqslant n$ we have
    $$\mu_{p_i}(\theta(\bar{x};\bar{b}))%=\mu_{p_i\ast q}(\theta(\bar{x};\bar{b}))
    \approx_{\frac{\epsilon}{2}}\Av\Big( \theta\big(\bar{x};\tau_j^{-1}(\bar{b})\big)\in p_i
    %\ast q|_{\FC'} \Big).$$
    |_{\FC'} \Big).$$
    
    So let $\bar{b}\in\FC^{\bar{y}}$ and take $\tau_1,\ldots,\tau_m\in\aut(\FC')$ as above.
    Let $\bar{y}_0$ be a finite subtuple of $\bar{y}$ containing all variables among $\bar{y}$ occuring in $\theta(\bar{x};\bar{y})$, and let $\bar{b}_0:=\bar{b}|_{\bar{y}_0}$. For every $j\leqslant m$ take $\bar{d}_j\in\FC^{\bar{y}_0}$ such that $\bar{d}_j\equiv_M\tau_j^{-1}(\bar{b}_0)$.
    Then
    \begin{IEEEeqnarray*}{rCl}
    \frac{1}{n}\sum\limits_{i\leqslant n}\mu_{p_i}(\theta(\bar{x};\bar{b}))
        &\approx_{\frac{\epsilon}{2}}&
        \frac{1}{n}\sum\limits_{i\leqslant n}\frac{1}{m}\sum\limits_{j\leqslant m}
        \delta_{p_i|_{\FC'}}\big( \theta(\bar{x};\tau_j^{-1}(\bar{b}))\big) \\
        &=& \frac{1}{m}\sum\limits_{j\leqslant m} \frac{1}{n}\sum\limits_{i\leqslant n}
        \delta_{p_i|_{\FC'}}\big( \theta(\bar{x};\tau_j^{-1}(\bar{b}))\big) \\
        &=& \frac{1}{m}\sum\limits_{j\leqslant m} \frac{1}{n}\sum\limits_{i\leqslant n}
        \delta_{p_i}\big( \theta(\bar{x};\bar{d}_j)\big) \\
        &=&
        \frac{1}{m}\sum\limits_{j\leqslant m} \Av\Big(p_1,\ldots,p_n;\big[\theta(\bar{x};\bar{d}_j)\big]\Big) \\
        &\approx_{\frac{\epsilon}{2}}& \frac{1}{m}\sum\limits_{j\leqslant m}\mu(\theta(\bar{x};\bar{d}_j))=
        \frac{1}{m}\sum\limits_{j\leqslant m}\mu|_{\FC'}\big(\theta(\bar{x};\tau_j^{-1}(\bar{b}))\big) \\
        &=& \frac{1}{m}\sum\limits_{j\leqslant m}(\tau_j\cdot\mu|_{\FC'})\big(\theta(\bar{x};\bar{b})\big)
        =\frac{1}{m}\sum\limits_{j\leqslant m}\mu\big(\theta(\bar{x};\bar{b})\big) \\
        &=& \mu(\theta(\bar{x};\bar{b})),
    \end{IEEEeqnarray*}
    where $\mu|_{\FC'}$ is the unique $\emptyset$-invariant extension of $\mu$ over $\FC'$.
\end{proof}

\begin{cor}\label{cor: uniques over closure of Ellis group}
    Let $\mu\in\mathfrak{M}^{\inv}_{\bar{n}}(\FC,\emptyset)$, $p\in I$.
    If $\supp(\mu)\subseteq \overline{p\ast S}$ then $\mu=\mu_p$.
\end{cor}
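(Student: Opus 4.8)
The plan is to combine the approximation result Lemma~\ref{lemma: approximation 2} with the rigidity statement Proposition~\ref{prop: uniqueness on closure}; essentially all the work has already been done in those two results, and the corollary is a short deduction.

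First I would fix a formula $\theta(\bar x;\bar y)\in\CL$, a parameter $\bar b\in\FC^{\bar y}$, and $\epsilon>0$. Since $\mu\in\mathfrak{M}^{\inv}_{\bar n}(\FC,\emptyset)$, Lemma~\ref{lemma: approximation 2} provides types $p_1,\dots,p_n\in\supp(\mu)\subseteq I$ with
\[\mu(\theta(\bar x;\bar b))\approx_{\epsilon}\frac1n\sum_{i\leqslant n}\mu_{p_i}(\theta(\bar x;\bar b)).\]
By the hypothesis $\supp(\mu)\subseteq\overline{p\ast S}$, each $p_i$ lies in $\overline{p\ast S}$, so Proposition~\ref{prop: uniqueness on closure} gives $\mu_{p_i}=\mu_p$ for every $i\leqslant n$; hence the average on the right-hand side equals $\mu_p(\theta(\bar x;\bar b))$, and we obtain $\mu(\theta(\bar x;\bar b))\approx_{\epsilon}\mu_p(\theta(\bar x;\bar b))$.

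Letting $\epsilon\to 0$ yields $\mu(\theta(\bar x;\bar b))=\mu_p(\theta(\bar x;\bar b))$, and since $\theta$ and $\bar b$ were arbitrary this shows $\mu=\mu_p$ (both being genuine elements of $\mathfrak{M}^{\inv}_{\bar n}(\FC,\emptyset)$ by Lemma~\ref{lemma: mu_p is inv Keisler measure}). The only point requiring attention -- and it is not really an obstacle -- is that the hypotheses of the two cited results line up: Lemma~\ref{lemma: approximation 2} needs $\mu$ to be $\emptyset$-invariant and produces its approximating support points inside $I=\fGen$, which is precisely what allows them to be fed into Proposition~\ref{prop: uniqueness on closure}. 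No further ideas are needed.
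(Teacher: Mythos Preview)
Your proof is correct and follows essentially the same route as the paper: apply Lemma~\ref{lemma: approximation 2} to approximate $\mu$ by an average of $\mu_{p_i}$ with $p_i\in\supp(\mu)\subseteq\overline{p\ast S}$, then invoke Proposition~\ref{prop: uniqueness on closure} to identify each $\mu_{p_i}$ with $\mu_p$, and let $\epsilon\to 0$.
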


\begin{proof}
    Let $\epsilon>0$, $\theta(\bar{x};\bar{y})\in\CL$, $\bar{b}\in\FC^{\bar{y}}$.
    By Lemma~\ref{lemma: approximation 2}, there exist $p_1,\ldots,p_n\in\supp(\mu)\subseteq \overline{p\ast S}$ such that
    \[\mu(\theta(\bar{x};\bar{b}))\approx_{\epsilon}\frac{1}{n}\sum\limits_{i\leqslant n}\mu_{p_i}(\theta(\bar{x};\bar{b})).\]
    Because $p_i\in \overline{p\ast I}$, Proposition~\ref{prop: uniqueness on closure} implies that $\mu_{p_i}=\mu_p$ for every $i\leqslant n$.
    Now, the conclusion follows easily.
\end{proof}

\begin{lemma}\label{lemma: continuity of mu_p}
    The map
    \[I\ni p\mapsto \mu_p\in\mathfrak{M}^{\inv}_{\bar{n}}(\FC,\emptyset)\]
    is continuous.
\end{lemma}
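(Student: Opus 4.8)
The plan is to show that for every $\theta(\bar{x};\bar{y})\in\CL$ and every $\bar{b}\in\FC^{\bar{y}}$ the function $\Phi_{\theta,\bar{b}}\colon I\to[0,1]$ given by $\Phi_{\theta,\bar{b}}(p)\coloneqq\mu_p(\theta(\bar{x};\bar{b}))$ is continuous; since the maps $\nu\mapsto\nu(\theta(\bar{x};\bar{b}))$ generate the topology on $\mathfrak{M}_{\bar{n}}(\FC)$ and each $\mu_p$ lies in $\mathfrak{M}^{\inv}_{\bar{n}}(\FC,\emptyset)$ by Lemma~\ref{lemma: mu_p is inv Keisler measure}, this yields continuity of $p\mapsto\mu_p$.

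The heart of the argument is a version of Proposition~\ref{prop: approximation 1} that is uniform over all of $I$: given $\theta(\bar{x};\bar{y})$, $\bar{b}$ and $\epsilon>0$, I would produce $\tau_1,\dots,\tau_n\in\aut(\FC')$ such that for \emph{every} $p\in I$ one has $\mu_p(\theta(\bar{x};\bar{b}))\approx_\epsilon\Av\big(j;\ \theta(\bar{x};\tau_j^{-1}(\bar{b}))\in p|_{\FC'}\big)$ (specializing Proposition~\ref{prop: approximation 1} with $q$ a coheir of $\tp(\bar{n}/M)$, so that $p\ast q=p$ by Lemma~\ref{lemma: cdot vs ast}). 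The proof of Proposition~\ref{prop: approximation 1} produces the $\tau_j$ from Fact~\ref{fact: 2.1 from ArtemPierre} applied to $\CF_{\theta(\bar{x};\bar{b}),K}$; by Lemma~\ref{lemma: F finite VC dimension} the VC bound does not depend on $K$, so the only thing preventing $K=I$ is the verification of the measurability hypotheses of Fact~\ref{fact: 2.1 from ArtemPierre} for the uncountable family $\CF_{\theta(\bar{x};\bar{b}),I}$, and this is the step I expect to be the main obstacle. I would handle it by noting that, as $\CL$ is countable, the sublanguage hypothesis of Lemma~\ref{lemma: F satisfies assumptions of Fact 2.1} is automatic, and that $\CF_{\theta(\bar{x};\bar{b}),I}$ is a finite-VC-dimension family of Borel subsets of the Polish probability space $(\gal_\KP(T),h_{\FC'})$, hence totally bounded for the pseudometric $d(A,B)=h_{\FC'}(A\triangle B)$; consequently the functions $f_n,g_n$ of Fact~\ref{fact: 2.1 from ArtemPierre} attached to $\CF_{\theta(\bar{x};\bar{b}),I}$ agree, off an $h_{\FC'}$-null set, with the corresponding functions attached to a countable subfamily, which are measurable by (the argument of) Lemma~\ref{lemma: F satisfies assumptions of Fact 2.1}. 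Alternatively one can push forward to the Polish Kim--Pillay group of a countable reduct exactly as in the proof of that lemma.

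Granting the uniform approximation, I would finish as follows. Fix $\theta(\bar{x};\bar{y})$ and $\bar{b}$; let $\bar{y}_0$ be the finite subtuple of $\bar{y}$ occurring in $\theta$ and $\bar{b}_0\coloneqq\bar{b}|_{\bar{y}_0}$. For each $k\geq 1$, apply the uniform approximation with $\epsilon=1/k$ to obtain $\tau^{(k)}_1,\dots,\tau^{(k)}_{n_k}\in\aut(\FC')$, and choose $\bar{d}^{(k)}_j\in\FC^{\bar{y}_0}$ with $\bar{d}^{(k)}_j\equiv_M(\tau^{(k)}_j)^{-1}(\bar{b}_0)$. Since $p|_{\FC'}$ is $M$-invariant, $\theta(\bar{x};(\tau^{(k)}_j)^{-1}(\bar{b}))\in p|_{\FC'}$ is equivalent to $\theta(\bar{x};\bar{d}^{(k)}_j)\in p$, so the function $\Psi_k\colon I\to[0,1]$ defined by $\Psi_k(p)\coloneqq\frac1{n_k}\big|\{\,j\leq n_k:\theta(\bar{x};\bar{d}^{(k)}_j)\in p\,\}\big|$ is locally constant --- it depends only on which of the finitely many clopen sets $[\theta(\bar{x};\bar{d}^{(k)}_j)]$ contain $p$ --- and hence continuous, with $\lVert\Phi_{\theta,\bar{b}}-\Psi_k\rVert_{\infty}\leq 1/k$. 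Therefore $\Phi_{\theta,\bar{b}}$ is a uniform limit of continuous functions on $I$, so it is continuous, which completes the plan. The one genuinely delicate point is the measurability of the suprema over the uncountable family $\CF_{\theta(\bar{x};\bar{b}),I}$ in the second paragraph; everything else is routine once the uniform approximation is in hand.
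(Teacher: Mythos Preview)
Your strategy differs from the paper's: you aim for an approximation that is uniform over all of $I$, so that $\Phi_{\theta,\bar b}$ is a uniform limit of locally constant functions. The paper instead shows directly that each set $\{p\in I: r\le\mu_p(\theta(\bar x;\bar b))\le s\}$ is closed: given a net $(p_j)\to q$ in this set, it uses that for countable $M_0$ the space $\mathrm{Inv}_\theta(M_0)$ is a Rosenthal compactum (this is where countability of $\CL$ enters) to extract a \emph{sequence} $(p_{j_n})$ with $p_{j_n}|_\theta\to q|_\theta$, and then applies Proposition~\ref{prop: approximation 1} only to the countable set $K=\{q\}\cup\{p_{j_n}:n<\omega\}$. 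For large $n$ the finitely many $\theta$-formulas $\theta(\bar x;\bar d_k)$ are decided the same way by $p_{j_n}$ and $q$, so the averages agree and $\mu_q(\theta(\bar x;\bar b))\approx_\epsilon\mu_{p_{j_n}}(\theta(\bar x;\bar b))\in[r,s]$. This sidesteps exactly the obstacle you flag.

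That obstacle is genuine, and your proposed resolution has a gap. Total boundedness of the VC family in $L^1(h_{\FC'})$ does yield a countable $L^1$-dense subfamily $\CF_0$, but $L^1$-closeness of $F$ and $F_0$ controls $|\Av(x_1,\dots,x_n;F)-\Av(x_1,\dots,x_n;F_0)|$ only on average, not pointwise. For a fixed $F\in\CF$ one can pass to a subsequence and conclude $|\Av(x;F)-\mu(F)|\le f_n^{\CF_0}(x)$ for $\mu^n$-a.e.\ $x$, but the exceptional null set depends on $F$, and for uncountable $\CF$ you cannot intersect. So the assertion that $f_n^{\CF_{\theta(\bar x;\bar b),I}}$ and $f_n^{\CF_0}$ agree off a null set is not justified, and Fact~\ref{fact: 2.1 from ArtemPierre} as stated does not apply. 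Your alternative of pushing to a countable reduct does not help either: since $\CL$ is already countable, $\gal_{\KP}(T)$ is already Polish, and the difficulty is the uncountable sup over $p\in I$, which the proof of Lemma~\ref{lemma: F satisfies assumptions of Fact 2.1} explicitly avoids by assuming $K$ countable. Your final uniform-limit step is fine once the uniform approximation is in hand; the paper's Rosenthal-compactum trick is precisely a device to avoid ever needing it.
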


\begin{proof}
    Let $\theta(\bar{x};\bar{y})\in\CL$, $\bar{b}\in\FC^{\bar{y}}$, $r,s\in\Rr$,
    and consider
    \[Y:=\{p\in I\;:\;r\leqslant\mu_p(\theta(\bar{x};\bar{b}))\leqslant s\}.\]
    We want to shot that $Y$ is closed in $I$, i.e.\ $Y=\overline{Y}$.

    Let $q\in\overline{Y}$, so there is a net $(p_j)_{j\in J}\subseteq Y$ converging to $q$.
    Let $M_0\preceq\FC$ be countable.
    Since $q,p_j$ are f-generics, they are $M_0$-invariant.
    In particular,
    $q|_{\theta},p_j|_{\theta}\in \mathrm{Inv}_{\theta}(M_0)$ and
    \[q|_{\theta}\in\overline{\{p_j|_{\theta}\;:\;j\in J\}}.\]
    As $\mathrm{Inv}_{\theta}(M_0)$ is a Rosenthal compactum, there is a sequence
    $(p_{j_n})_{n<\omega}$ such that $(p_{j_n}|_{\theta})_{n<\omega}\to q|_{\theta}$ in $\mathrm{Inv}_{\theta}(M_0)$ (cf.\ Proposition 2.15 in \cite{Simon_2015}; note that here we use the hypothesis that the language is countable).

    Let $\bar{y}_0$ be a finite subtuple of $\bar{y}$ containing all variables among $\bar{y}$ which occur in $\theta(\bar{x};\bar{y})$, let $\bar{b}_0:=\bar{b}|_{\bar{y}_0}$.
    Consider $\epsilon>0$ and apply Proposition~\ref{prop: approximation 1} to the set $K=\{q\}\cup\{p_{j_n}\;:\;n<\omega\}$ and $\frac{\epsilon}{2}$ to find $\tau_1,\ldots,\tau_m\in\aut(\FC')$
    and
    $\bar{d}_k\in\FC^{\bar{y}_0}$
    such that for every $k\leqslant m$ we have
    $\bar{d}_k\equiv_M\tau_k^{-1}(\bar{b}_0)$, and
    \begin{IEEEeqnarray*}{rClCl}
            \mu_{q}(\theta(\bar{x};\bar{b})) &\approx_{\frac{\epsilon}{2}}&
    \Av\Big(k;\; \theta\big(\bar{x};\bar{d}_k\big)\in q\Big) &=&
    \Av\Big(k;\; \theta\big(\bar{x};\bar{d}_k\big)\in q|_{\theta}\Big), \\
    \mu_{p_{j_n}}(\theta(\bar{x};\bar{b})) &\approx_{\frac{\epsilon}{2}}&
    \Av\Big(k;\; \theta\big(\bar{x};\bar{d}_k\big)\in p_{j_n}\Big) &=&
    \Av\Big(k;\; \theta\big(\bar{x};\bar{d}_k\big)\in p_{j_n}|_{\theta}\Big),
    \end{IEEEeqnarray*}
    for every $n<\omega$.
    There is $n_0<\omega$ such that for every $n\geqslant n_0$
    \[\Av\Big(k;\; \theta\big(\bar{x};\bar{d}_k\big)\in p_{j_n}|_{\theta}\Big) = \Av\Big(k;\; \theta\big(\bar{x};\bar{d}_k\big)\in q|_{\theta}\Big).\]
    Therefore, for every $n\geqslant n_0$ we have
    \[\mu_{p_{j_n}}(\theta(\bar{x};\bar{b}))\approx_{\epsilon}\mu_q(\theta(\bar{x};\bar{b})).\]
    Because each $p_{j_n}$ belongs to $Y$, we have that $r\leqslant\mu_{p_{j_n}}(\theta(\bar{x};\bar{b}))\leqslant s$, and thus (since $\epsilon$ was arbitrary) $\mu_q(\theta(\bar{x};\bar{b}))\in \bigcap\limits_{\epsilon>0}(r-\epsilon,s+\epsilon)=[r,s]$. Hence $q\in Y$.
\end{proof}

\begin{cor}\label{cor: mu_p flows}
    \begin{enumerate}
        \item $\{\mu_p\;:\;p\in I\}$ is closed subset of $\mathfrak{M}^{\inv}_{\bar{n}}(\FC,\emptyset)$.

        \item For every $\mu\in\mathfrak{M}^{\inv}_{\bar{n}}(\FC,\emptyset)$, the set $\{p\in I\;:\;\mu_p=\mu\}$ is a subflow in $(I,\aut(\FC))$.
    \end{enumerate}
\end{cor}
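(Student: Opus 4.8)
The plan is to read off both parts as essentially immediate consequences of the continuity of the map $p\mapsto\mu_p$ (Lemma~\ref{lemma: continuity of mu_p}), combined with two structural facts already available: that $I=\fGen$ is a closed, hence compact, subset of the type space, and that for an f-generic $p$ the $\ast$-orbit $p\ast S$ coincides with the $\aut(\FC)$-orbit $\aut(\FC)\cdot p$ (Corollary~\ref{cor: orbit of f-generic}), so that $\mu_p$ actually depends only on this orbit.

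For part (1): by Theorem~\ref{thm: fGen and injectivity on Ellis groups}(3), $I=\fGen$ is the unique minimal left ideal of $(S^{\inv}_{\bar{n}}(\FC,M),\ast)$, and minimal left ideals in this semigroup are closed (as recalled at the start of Section~\ref{sec: ideals and stabilizers}); hence $I$ is closed in the compact Hausdorff space $S_{\bar{x}}(\FC)$, thus compact. Feeding $I$ through the continuous map of Lemma~\ref{lemma: continuity of mu_p}, the set $\{\mu_p\;:\;p\in I\}$ is the continuous image of a compact set, hence compact, hence closed in the Hausdorff space $\mathfrak{M}^{\inv}_{\bar{n}}(\FC,\emptyset)$.

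For part (2): closedness of $\{p\in I\;:\;\mu_p=\mu\}$ is immediate, being the preimage of the closed point $\{\mu\}$ under the continuous map $p\mapsto\mu_p$. For $\aut(\FC)$-invariance I would first check that $\aut(\FC)$ really acts on $I$: if $p\in I$ and $\sigma\in\aut(\FC)$, then since $\autf_{\Las}(\FC)$ is normal in $\aut(\FC)$ and an f-generic $p$ is $\autf_{\Las}(\FC)$-invariant, $\sigma\cdot p$ is again $\autf_{\Las}(\FC)$-invariant, hence $M$-invariant and f-generic, so $\sigma\cdot p\in I$. Then, applying Corollary~\ref{cor: orbit of f-generic} to the f-generics $p$ and $\sigma\cdot p$, one gets $(\sigma\cdot p)\ast S=\aut(\FC)\cdot(\sigma\cdot p)=\aut(\FC)\cdot p=p\ast S$, and therefore $\mu_{\sigma\cdot p}=\mu_p$ directly from Definition~\ref{def: almost pullback}. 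Hence $\{p\in I\;:\;\mu_p=\mu\}$ is $\aut(\FC)$-invariant, and being closed as well it is a subflow of the flow $(I,\aut(\FC))$ whenever it is nonempty, i.e.\ whenever $\mu$ lies in the range $\{\mu_p\;:\;p\in I\}$.

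I do not expect a genuine obstacle: the hard work — continuity of $p\mapsto\mu_p$, the identification $p\ast S=\aut(\FC)\cdot p$, and the minimal-left-ideal description of $\fGen$ — has already been done. The only steps requiring a moment of care are verifying that $\aut(\FC)$ acts on $I$ (normality of $\autf_{\Las}(\FC)$) and, if one insists on the literal meaning of "subflow", noting that the fiber over a $\mu$ outside the range of $p\mapsto\mu_p$ is empty.
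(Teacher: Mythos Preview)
Your proposal is correct and follows essentially the same approach as the paper: for (1), continuous image of compact is closed; for (2), closedness is the preimage of a point, and $\aut(\FC)$-invariance boils down to $\mu_{\sigma\cdot p}=\mu_p$. The only cosmetic difference is that the paper obtains $\mu_{\sigma\cdot p}=\mu_p$ by invoking Lemma~\ref{lemma: cdot vs ast} to write $\sigma\cdot p=p\ast q$ and then using $\mu_{p\ast q}=\mu_p$, whereas you go via Corollary~\ref{cor: orbit of f-generic} to get $(\sigma\cdot p)\ast S=p\ast S$ and appeal directly to Definition~\ref{def: almost pullback}; these are interchangeable.
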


\begin{proof}
    The set considered in the first point is image of a compact set by a continuous map (Lemma~\ref{lemma: continuity of mu_p}), thus it is closed.

    Now, we argue for the second point. Let $\sigma\in\aut(\FC)$ and take for $q\in S$ a coheir of $\tp(\sigma(m)/M)$. Then, by Lemma~\ref{lemma: cdot vs ast}, we have
    $\sigma\cdot p=p\ast q\in p\ast I$. This implies $\mu_{\sigma\cdot p}=\mu_{p\ast q}=\mu_p$.
    Moreover, $\{p\in I\;:\;\mu_p=\mu\}$ is closed as preimage of $\{\mu\}$ via a continous map.
\end{proof}

\begin{theorem}\label{thm: ergodic description}
    Let $T$ be a countable amenable NIP theory. Then the ergodic measures in $\mathfrak{M}^{\inv}_{\bar{n}}(\FC,\emptyset)$ are exactly the measures $\mu_p$ (cf.\ Definition~\ref{def: almost pullback}) for $p\in \fGen$, i.e.:
    \[ \{\mu_p\;:\;p\in \fGen\} = \{\mu\in\mathfrak{M}^{\inv}_{\bar{n}}(\FC,\emptyset)\;:\;\mu\text{ is ergodic in }\mathfrak{M}^{\inv}_{\bar{n}}(\FC,\emptyset)\}\]
\end{theorem}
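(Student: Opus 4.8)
The plan is to establish a double inclusion. For the inclusion $\{\mu_p : p\in\fGen\}\subseteq\{\text{ergodic measures}\}$, fix $p\in\fGen=I$ and suppose $\mu_p=\alpha\nu_1+(1-\alpha)\nu_2$ for some $\alpha\in(0,1)$ and $\nu_1,\nu_2\in\mathfrak{M}^{\inv}_{\bar{n}}(\FC,\emptyset)$. Then $\supp(\nu_i)\subseteq\supp(\mu_p)\subseteq\overline{p\ast S}$ by Lemma~\ref{lemma: mu_p is inv Keisler measure}, and so by Corollary~\ref{cor: uniques over closure of Ellis group} we get $\nu_1=\nu_2=\mu_p$. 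Hence $\mu_p$ is extreme in the (weak-$\ast$ compact, convex) set of $\emptyset$-invariant measures, i.e.\ ergodic.

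For the reverse inclusion, let $\mu\in\mathfrak{M}^{\inv}_{\bar{n}}(\FC,\emptyset)$ be ergodic. By Proposition~\ref{prop: support of f-generics}, $\supp(\mu)\subseteq I$. The idea is to write $\mu$ as an average of the $\mu_p$ for $p\in\supp(\mu)$ and then use extremality. More precisely, consider the set $C:=\{\mu_p : p\in I\}$, which is closed in $\mathfrak{M}^{\inv}_{\bar{n}}(\FC,\emptyset)$ by Corollary~\ref{cor: mu_p flows}(1); let $K:=\overline{\conv}(C)$ be its closed convex hull, a weak-$\ast$ compact convex set. First I would show $\mu\in K$: Lemma~\ref{lemma: approximation 2} gives, for each $\epsilon>0$ and each formula $\theta$, finitely many $p_1,\dots,p_n\in\supp(\mu)$ with $\mu(\theta(\bar x;\bar b))\approx_\epsilon \frac1n\sum_i\mu_{p_i}(\theta(\bar x;\bar b))$ uniformly in $\bar b$; running this over a cofinal family of $(\theta,\epsilon)$ and taking a weak-$\ast$ limit of the resulting convex combinations $\frac1n\sum_i\delta_{\mu_{p_i}}$-averages produces $\mu$ as a limit of elements of $\conv(C)$, hence $\mu\in K$. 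Now apply the description of extreme points of a closed convex hull: by Milman's theorem the extreme points of $K=\overline{\conv}(C)$ are contained in $\overline{C}=C$. Since $\mu$ is ergodic, $\mu$ is extreme in the full space of invariant measures, a fortiori extreme in the subset $K$, so $\mu\in C$, i.e.\ $\mu=\mu_p$ for some $p\in I=\fGen$.

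The routine ingredients are: that ergodicity is exactly extremality in the convex compact space of invariant measures; the weak-$\ast$ compactness of $\mathfrak{M}^{\inv}_{\bar{n}}(\FC,\emptyset)$ (standard, as a closed subspace of a space of regular Borel probability measures on the compact $S_{\bar n}(\FC)$); and Milman's partial converse to Krein--Milman. The main obstacle is the verification that $\mu\in\overline{\conv}\{\mu_p:p\in I\}$, i.e.\ upgrading the one-formula, one-$\epsilon$ approximation in Lemma~\ref{lemma: approximation 2} to a genuine weak-$\ast$ approximation by \emph{convex combinations} of the measures $\mu_{p_i}$. The subtlety is that the $p_i$ and their number depend on $\theta$ and $\epsilon$, so one must argue that the net of finite averages $\nu_{\theta,\epsilon}:=\frac1n\sum_{i\le n}\mu_{p_i}\in\conv(C)$, indexed by the directed set of pairs $(\theta,\epsilon)$, weak-$\ast$ converges to $\mu$; this is immediate once one notes that the estimate $\mu(\theta(\bar x;\bar b))\approx_\epsilon\nu_{\theta,\epsilon}(\theta(\bar x;\bar b))$ holds \emph{uniformly} in $\bar b$, so that along this net $\nu_{\theta,\epsilon}(\psi)\to\mu(\psi)$ for every $\psi$, which by density of clopen-supported continuous functions (i.e.\ of formulas) in $C(S_{\bar n}(\FC))$ gives weak-$\ast$ convergence. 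The continuity of $p\mapsto\mu_p$ (Lemma~\ref{lemma: continuity of mu_p}) is what makes $C$ compact and hence legitimizes the use of Milman's theorem; without it one would only know $\overline{C}$ rather than $C$ contains the extreme points, but then Corollary~\ref{cor: mu_p flows}(1) already gives $C$ closed, so no extra work is needed.
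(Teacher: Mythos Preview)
Your argument is essentially the paper's own: the $(\subseteq)$ direction is identical, and for $(\supseteq)$ both you and the paper use Lemma~\ref{lemma: approximation 2} to place invariant measures in $\overline{\conv}\{\mu_p:p\in I\}$, then invoke Milman together with the closedness of $\{\mu_p:p\in I\}$ from Corollary~\ref{cor: mu_p flows}(1). The paper in fact proves slightly more, namely that the \emph{entire} space $\mathfrak{M}^{\inv}_{\bar n}(\FC,\emptyset)$ equals $\overline{\conv}\{\mu_p:p\in I\}$, but your weaker statement that the given ergodic $\mu$ lies there is all that is needed.

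One small point deserves attention. In your net argument you claim that the estimate $\mu(\theta(\bar x;\bar b))\approx_\epsilon\nu_{\theta,\epsilon}(\theta(\bar x;\bar b))$, uniform in $\bar b$, yields $\nu_{\theta,\epsilon}(\psi)\to\mu(\psi)$ for \emph{every} formula $\psi$. As written this does not follow: Lemma~\ref{lemma: approximation 2} controls $\nu_{\theta,\epsilon}$ only on instances of the chosen $\theta$, not on an unrelated $\psi$. What makes the directed set of pairs $(\theta,\epsilon)$ genuinely directed (and the net convergent) is the observation that any finite collection of formulas $\theta_1,\dots,\theta_k$ can be realised as instances of a single formula $\theta'$ in more variables (e.g.\ via a Boolean combination with parameter-switches); applying the lemma to $\theta'$ then controls all $\theta_j$ simultaneously. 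This is exactly what the paper invokes as ``the standard Shelah's trick''. With that one-line fix your argument is complete and matches the paper's.
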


\begin{proof}
    ($\subseteq$): Let $p\in I=\fGen$. We claim that $\mu_p\in\mathfrak{M}^{\inv}_{\bar{n}}(\FC,\emptyset)$ is ergodic. Suppose towards contradiction 
    that $\mu_p=t\mu_1+(1-t)\mu_2$ for some $\mu_1,\mu_2\in\mathfrak{M}^{\inv}_{\bar{n}}(\FC,\emptyset)$ and $0<t<1$. Then $\supp(\mu_1),\supp(\mu_2)\subseteq\supp(\mu_p)\subseteq \overline{p\ast S}$. Then $\mu_1=\mu_p=\mu_2$ by Corollary~\ref{cor: uniques over closure of Ellis group}.

    ($\supseteq$): 
    Let $Y:=\{\mu_p\;:\;p\in I\}$. 
    By Lemma~\ref{lemma: approximation 2} and the standard Shelah's trick,
    $\mathfrak{M}^{\inv}_{\bar{n}}(\FC,\emptyset)=\overline{\conv(Y)}$.
    Then a classical fact (e.g.\ Fact 4.1 in \cite{ArtemPierre}) implies that all the extreme points
    of $\mathfrak{M}^{\inv}_{\bar{n}}(\FC,\emptyset)$
    (i.e.\ the ergodic measures) are contained
    in the closure of $Y$, which is equal to $Y$
    by Corollary~\ref{cor: mu_p flows}.(1).
\end{proof}

\subsection{Stable case}
For the next two remarks, we do not assume countability of the language.

\begin{remark}\label{rem: Ellis stable 1}
    Fix $p\in I$. Then:
    \begin{enumerate}
        \item If $T$ is stable, then $p\ast-\colon S^{\inv}_{\bar{n}}(\FC,M)\to S^{\inv}_{\bar{n}}(\FC,M)$ is continuous and $p\ast I$ is closed.

        \item
        If $p\ast I$ is closed in $S_{\bar n}(\FC)$, then $(p\ast I,\ast)$ is a Hausdorff compact group isomorphic to $\gal_\KP(T)$.

        \item
        If $p\ast I$ is closed in $S_{\bar n}(\FC)$, then $\mu_p$ restricted to $p\ast I$ is the unique Haar measure on $p\ast I$, in particular $\supp(\mu_p)=p\ast I$.
    \end{enumerate}
\end{remark}

\begin{proof}
    For the proof of the first point, note that $\ast$-product is right-continuous in the stable case, which follows easily from the definability of types invariant over models. Then $p\ast I$ is compact as the image of the compact set $I$ under continuous map $p\ast-$.

    Now, we proceed to the proof of the second point.
    If $p*I$ is compact, then $\rho\colon p*I\to \gal_\KP(T)$ is a homeomorphism (as a continuous bijection between compact spaces), so $p*I$ is a compact topological group (with the induced topology) isomorphic to $\gal_\KP(T)$.
\end{proof}

\begin{remark}
    Arguing as in \cite[Proposition 7.23.(1)]{GHK}, we can show that if $T$ is stable, then $\fGen$ has a unique idempotent and so $(\fGen,\ast)$ is a profinite topological group isomorphic to $\gal_{\mathrm{Sh}}(T)=\gal_\KP(T)$.
\end{remark}

\appendix

\section{Semigroups}
\begin{remark}\label{rem: S in I}
    If $S\leqslant S'$ is a simple semigroup and $I$ is a two-sided ideal in $S'$ intersecting $S$, then $I$ contains $S$: if $s\in S\cap I$, then $S=SsS\subseteq S'sS'\subseteq I$.
\end{remark}

\begin{remark}
    If $S\leqslant S'$ is a subsemigroup and $I$ is the minimal ideal of $S'$, $S$ intersects $I$, and $S$ has a minimal ideal. Then $S\cap I$ is the minimal ideal of $S$. (Indeed, by minimality, $S\cap I$ contains the minimal ideal of $S$, which is left simple.)
\end{remark}

\begin{proposition}\label{prop: unique left ideal in subsemigroup}
    Suppose $S'$ is a compact left topological semigroup, $S\leqslant S'$ is a closed subsemigroup. Suppose furthermore that there is exactly one minimal left ideal $I$ of $S'$ intersecting $S$. Then $I\cap S$ is the unique minimal left ideal of $S$.
\end{proposition}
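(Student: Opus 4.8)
The plan is to run the standard structure theory of compact left topological semigroups, exploiting that $S$, being a closed subsemigroup of $S'$, is itself a compact left topological semigroup: it has minimal left ideals, all of them closed, every closed left ideal of $S$ contains a minimal one, and every closed subsemigroup of $S$ contains an idempotent (Ellis--Numakura). I will also use two elementary facts valid in any semigroup $T$: if $L$ is a minimal left ideal of $T$ and $a\in L$ then $Ta=L$, and if $L$ is a minimal left ideal of $T$ and $b\in T$ then $Lb$ is again a minimal left ideal of $T$.

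First I would show that $A:=I\cap S$ is itself a minimal left ideal of $S$. It is nonempty by hypothesis, closed (as $I$ is a closed minimal left ideal of $S'$ and $S$ is closed), and a left ideal of $S$ (for $s\in S$, $a\in A$ one has $sa\in S'a\subseteq I$ since $I$ is a left ideal of $S'$, and $sa\in S$). So $A$ contains a minimal left ideal $L_0$ of $S$, which, being closed, contains an idempotent $f$. Then $f\in L_0\subseteq A$ gives $f\in I\cap S$, so $S'f=I$ (minimality of $I$ in $S'$) and $Sf=L_0$ (minimality of $L_0$ in $S$). The key point is now: given any $c\in A$, we have $c\in I=S'f$, say $c=s'f$ with $s'\in S'$, and then $cf=s'ff=s'f=c$; since $c\in S$ and $f\in S$ this shows $c=cf\in Sf=L_0$. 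Hence $A\subseteq L_0\subseteq A$, so $I\cap S=L_0$ is a minimal left ideal of $S$.

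For uniqueness, let $L$ be any minimal left ideal of $S$ and pick $b\in L$. Then $L_0 b$ is a minimal left ideal of $S$ contained in $Sb\subseteq L$, hence $L=L_0 b$; and $Ib$ is a minimal left ideal of $S'$ containing $L_0 b=L\subseteq S$, so $Ib$ meets $S$ and therefore $Ib=I$ by the hypothesis. Consequently $L=L_0 b\subseteq Ib=I$, so $L\subseteq I\cap S=L_0$, and minimality forces $L=L_0=I\cap S$.

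I expect the only step that is not pure bookkeeping to be the ``key point'' in the second paragraph: replacing the representation $c=s'f$ by $c=cf$ is what pulls $c$ back into $S$ and hence into $Sf=L_0$. Everything else is an application of the standard machinery; in particular, the hypothesis that exactly one minimal left ideal of $S'$ meets $S$ is used only in the uniqueness argument, to rule out the possibility that $L$ sits over a minimal left ideal of $S'$ disjoint from $S$.
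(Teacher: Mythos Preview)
Your proof is correct and shares its core with the paper's: both pick an idempotent $f$ (the paper calls it $u$) inside a minimal piece of $I\cap S$ and use the identity $cf=c$ for $c\in I\cap S$ to conclude that $I\cap S$ is left simple. The uniqueness arguments diverge slightly: the paper observes that, by the hypothesis, $I\cap S$ equals the intersection of $S$ with the minimal two-sided ideal of $S'$, hence is a two-sided ideal of $S$, and a left-simple two-sided ideal is automatically the unique minimal left ideal; you instead push an arbitrary minimal left ideal $L$ of $S$ into $Ib$ for some $b\in L$ and use the hypothesis to force $Ib=I$. Your route is a bit more hands-on and avoids mentioning the two-sided kernel of $S'$; the paper's is slightly more structural. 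Both are short and equally valid.
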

\begin{proof}
    Notice that $I\cap S$ is left simple: it is a closed subsemigroup of $I$, so it is compact, and as such, it contains a minimal left ideal $I''$ of $I\cap S$. We claim that $I''=I\cap S$. Indeed, let $u\in I''$ be idempotent, and let $s\in I\cap S$ be arbitrary. Then, since $u,s\in I$, which is a minimal left ideal in $S'$, we have that $su=s\in (I\cap S)I''=I''$.

    On the other hand, $I\cap S$ is the intersection with $S$ of the minimal two-sided ideal of $S'$ (since $I$ is the unique minimal left ideal of $S'$ intersecting $S$), and so it is a two-sided ideal of $S$. Since it is left simple, it follows that it is a minimal left ideal in itself, and hence also in $S$.
\end{proof}

\begin{proposition}\label{prop: ideals vs semigroups}
    Suppose $S'$ is a compact left topological semigroup, $S\leqslant S'$ closed subsemigroup which intersects the minimal ideal of $S'$. Then Ellis groups of $S$ are subgroups of Ellis groups of $S'$.
\end{proposition}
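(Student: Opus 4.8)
The plan is to work entirely inside the kernels (minimal two-sided ideals) of $S$ and $S'$ and to use the standard identification of Ellis groups with the maximal subgroups sitting at minimal idempotents. Since $S'$ is a compact left topological semigroup it has a minimal two-sided ideal $K'$, namely the union of its (closed, nonempty) minimal left ideals, and $K'$ is contained in every two-sided ideal of $S'$. As $S$ is a \emph{closed} subsemigroup, it is again a compact left topological semigroup, so it likewise has a minimal two-sided ideal $K$, contained in every two-sided ideal of $S$, and Ellis groups of $S$ make sense: they are the sets $uSu$ for $u\in K$ an (automatically minimal) idempotent of $S$, equivalently $uL$ for $L$ the minimal left ideal of $S$ containing $u$.

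First I would check that $S\cap K'$ is a nonempty two-sided ideal of $S$. Nonemptiness is exactly the hypothesis that $S$ meets the minimal ideal of $S'$; and if $a\in S$, $x\in S\cap K'$, then $ax,xa\in S$ because $S$ is a subsemigroup, while $ax,xa\in K'$ because $K'$ is a two-sided ideal of $S'$ and $a\in S'$. Since $K$ is the least two-sided ideal of $S$, it follows that $K\subseteq S\cap K'\subseteq K'$; in particular every minimal idempotent of $S$ lies in $K'$, hence is a minimal idempotent of $S'$ as well.

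Now fix an Ellis group $uSu$ of $S$, with $u=u^2\in K$. By the previous step $u\in K'$, so $uS'u$ is an Ellis group of $S'$. The inclusion $uSu\subseteq uS'u$ is immediate from $S\subseteq S'$; the set $uSu$ is closed under the operation since $(usu)(us'u)=u\,(sus')\,u$ using $uu=u$; and $u$ is the two-sided identity of both $uSu$ and $uS'u$. Since $uSu$ is already a group, the inverse in $uSu$ of any of its elements is also its inverse in $uS'u$, so $uSu$ is a subgroup of $uS'u$, as required.

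The argument is short; the only thing to be careful about is the bookkeeping with the classical Ellis-semigroup facts invoked above: existence of the kernel in a compact left topological semigroup, its being the least two-sided ideal, and the fact that the idempotents of the kernel are precisely the minimal idempotents, with $uSu$ the associated Ellis group. All of these are standard and available from the same source already cited in the paper for the existence of minimal left ideals (cf.\ Fact~A.8 in \cite{rzepecki2018}).
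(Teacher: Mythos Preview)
Your proof is correct and follows essentially the same approach as the paper's: both use compactness of $S$ to get a minimal ideal, show it is contained in the minimal ideal of $S'$ (you spell out the two-sided-ideal argument the paper leaves implicit), and then identify Ellis groups with maximal subgroups of the kernel. Your version simply makes explicit the bookkeeping the paper's three-line proof elides.
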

\begin{proof}
    Since $S$ is closed, it is compact, so it has a minimal ideal. It follows that the minimal ideal in $S$ is contained in the minimal ideal of $S'$. Since the Ellis groups are simply the maximal subgroups of the minimal ideal, the conclusion follows.
\end{proof}

\printbibliography
\end{document}